\newcommand{\F}{\mathbb{F}} 
\def\RCS$#1: #2 ${\expandafter\def\csname RCS#1\endcsname{#2}}
\DeclareMathOperator{\Frob}{Frob}
\newcommand{\labelbp}[1]
{\refstepcounter{equation}\hfill(\theequation)\label{#1}}
 \newcommand{\eps}{\epsilon}
\newcommand{\To}{\longrightarrow}\newcommand{\into}{\hookrightarrow}
\newcommand{\onto}{\twoheadrightarrow}
\newcommand{\isoto}{\stackrel{\sim}{\To}}
\newcommand{\cO}{\mathcal{O}}
\newcommand{\bigO}{\mathcal{O}} 
\newcommand{\cSL}{\mathcal{SL}}
\newcommand{\cG}{\mathcal{G}}
\newcommand{\cV}{\mathcal{V}}
\newcommand{\cT}{\mathcal{T}}
\newcommand{\cD}{\mathcal{D}}
\newcommand{\cB}{\mathcal{B}}
\newcommand{\cU}{\mathcal{U}}
\newcommand{\cH}{\mathcal{H}}
\newcommand{\Pj}{\mathbb{P}}
\newcommand{\Z}{\mathbb{Z}} 
\newcommand{\Q}{\mathbb{Q}}
\newcommand{\C}{\mathbb{C}}
\newcommand{\otimesind}{\otimes\text{-}\!\Ind}
\DeclareMathOperator{\Stab}{Stab}
\newcommand{\Gal}{\operatorname{Gal}}
\newcommand{\GO}{\operatorname{GO}}
\newcommand{\GL}{\operatorname{GL}}
\newcommand{\Sp}{\operatorname{Sp}}
\newcommand{\PGL}{\operatorname{PGL}}
 \newcommand{\Qbar}{\overline{\Q}}
\newcommand{\Ind}{\operatorname{Ind}}
\newcommand{\SL}{\operatorname{SL}}
\newcommand{\ad}{\operatorname{ad}}
\newcommand{\Hom}{\operatorname{Hom}}
\newcommand{\Aut}{\operatorname{Aut}}
\newcommand{\mar}[1]{\marginpar{\tiny #1}}                                 
\newcommand{\Gm}{{\mathbb{G}_m}}
\newtheorem*{claim}{Claim}  
\newtheorem{thm}{Theorem}[subsection]
\newtheorem{corollary}[thm]{Corollary}
\newtheorem{cor}[thm]{Corollary}
 \newtheorem{lemma}[thm]{Lemma}
\newtheorem{lem}[thm]{Lemma} \newtheorem{prop}[thm]{Proposition}
 \theoremstyle{definition}
 \theoremstyle{definition}
\newtheorem{defn}[thm]{Definition} \theoremstyle{remark}
\newtheorem{rem}[thm]{Remark} 
\newtheorem{remark}[thm]{Remark} 
\numberwithin{equation}{subsection}
\theoremstyle{definition}
\begin{document}
\title{Non-big subgroups for $l$ large}

\author{Thomas Barnet-Lamb}\email{tbl@brandeis.edu}\address{Department of Mathematics, Brandeis University, 415 South Street MS 050, Waltham MA}
\subjclass[2000]{11R39 (primary), 20D99 (secondary).}
\begin{abstract} Lifting theorems form an important collection of tools in showing 
that Galois representations are
associated to automorphic forms. (Key examples in dimension $n>2$ are
the lifting theorems of Clozel, Harris and Taylor and of Geraghty.) All present lifting 
theorems for $n>2$ dimensional representations have a certain rather technical 
hypothesis---the residual image must be `big'.
The aim of this paper is to demystify this condition somewhat.

For a fixed integer $n$, and a prime $l$ larger than a constant depending on $n$, we 
show that $n$ dimensional mod $l$ representations which fail to be big must be of one 
of three kinds: they either
fail to be absolutely irreducible, are induced from representations of larger fields, or can be
written as a tensor product including a factor which is the reduction of an Artin 
representation in characteristic zero. Hopefully this characterization will make the bigness
condition more comprehensible, at least for large $l$.
\end{abstract}
\maketitle
\tableofcontents
\section{Introduction}

\subsection{}
Recent years have seen some progress in proving modularity lifting theorems 
and potential modularity theorems for Galois representations of dimension $n>2$: 
for instance, see \cite{cht,hsbt,ger,BLGHT,BLGG,BLGGT}. One hypothesis in common to all these
theorems is that the image of the residual representation to which we want to
apply the theorem must be `big', a rather technical hypothesis first introduced 
by Clozel, Harris and Taylor in \cite{cht}. (See the beginning of \S2 for a full 
statement of this condition.) In fact, several of these theorems rely on a stronger 
hypothesis, introduced in \cite{BLGHT}: the image of the residual representation must
be `$M$-big'. (Again, see \S2 for a full statement of this condition. We remark that
generally one writes `$m$-big' rather than `$M$-big', but we suffer from a shortage
of letters in this paper and have had to economize.)

While on the one hand it seemed to be fairly easy to show that these bigness and $M$-bigness
hypotheses hold with a specific example in mind (see, for instance Corollaries 2.4.3 and 2.5.4
and Lemmas 2.5.5 and 2.5.6 of \cite{cht} or Lemmas 7.3--4 of \cite{BLGHT}), it remained for
some time the case that, considered in general, the condition was rather 
mysterious---especially given the fact that its precise definition is so technically involved.

A considerable advance in the situation came with the paper \cite{snowden2009bigness}.
The main result of this paper was to investigate the question of bigness in compatible
systems, showing that for compatible systems of Galois representations which are `strongly 
irreducible' (in the sense that for any given finite field extension, the restrictions of the 
representations in the compatible system to that extension are irreducible at almost all
places), the residual image will be big at a positive Dirichlet density of places. Along the way,
they prove that a large class of subgroups are big (in particular, the images of 
certain algebraic representations), thus giving a rather general situation in which 
bigness can be deduced (for instance,  the specific examples considered in \cite{cht,BLGHT}, 
are much more specific and are indeed immediate consequences). This was generalized to
$M$-bigness by White in \cite{pjw}.

The aim of this paper is to push the powerful techniques introduced by Snowden and
Wiles in that paper a little further, combining them with a little group theory to give
a still larger class of representations with big image. The class is rather broad, and 
indeed the only representations \emph{not} in the class are of rather special kinds.

Our main theorem is the following (see Theorem \ref{main-theorem again}):
\begin{thm} \label{main-thm}
For each pair of positive integers $n$ and $M$, there is an 
integer $C(M,n)$ with the following property. Let $l>C(M,n)$ be a prime, $k/\F_l$ a finite 
extension with $l\nmid[k:\F_l]$, 
$\Gamma_0$ a profinite group, and $r:\Gamma_0\to\GL_n(k)$ a representation. 
For convenience of notation, let us choose a number field $L$ and prime $\lambda$ 
of $L$ such that $\bigO_L/\lambda \bigO_L=k$. Suppose that the image of $r$ is not 
$M$-big. Then one of the following must hold:
\begin{enumerate}
\item $r$ does not act absolutely irreducibly on $k^n$,
\item there is a proper subgroup $\Gamma_0'<\Gamma_0$, an integer $m|n$, and
a representation $r':\Gamma_0'\to \GL_m(k)$ such that 
$r=\Ind_{\Gamma_0'}^{\Gamma_0} r'$, or
\item there are representations 
$r_1:\Gamma_0\to\GL_m(\bigO_L)$ and
$r_2:\Gamma_0\to\GL_{m'}(k)$, with open kernels and with $m>1$ and $n=mm'$,
such that $r= \bar{r}_1\otimes r_2$.
\end{enumerate}
Furthermore, in case (3) the order of the image of $r_1$ can be bounded in terms
of $n$.
\end{thm}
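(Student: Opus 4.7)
The plan is to combine a structure theorem of Larsen--Pink type for finite subgroups of $\GL_n(\bar\F_l)$ in large characteristic with classical Clifford theory, and then invoke the bigness results of Snowden--Wiles (as extended to $M$-bigness by White) for images of algebraic representations. I argue by contrapositive: suppose we are not in case (1) or case (2), so that $G := r(\Gamma_0) \subseteq \GL_n(k)$ is finite, absolutely irreducible, and primitive; the task is to show that either $r$ has the tensor structure of case (3), or $G$ is $M$-big.

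For $l > C_1(n)$, Larsen--Pink produces a normal subgroup $G^+ \triangleleft G$ of index $[G:G^+] \le J(n)$, with $J(n)$ depending only on $n$, such that $G^+$ is, modulo central $l$-parts, a product of finite groups of Lie type in characteristic $l$. Apply Clifford theory to $r|_{G^+}$: primitivity forces the inertia of each isotypic component to be all of $G$, so $r|_{G^+} \cong \rho^{\oplus e}$ for a single absolutely irreducible, $G$-stable $\rho:G^+ \to \GL_{m'}(\bar k)$, with $n = em'$. The obstruction to extending $\rho$ to a representation of $G$ lies in $H^2(G/G^+, \bar k^\times)$, whose exponent is bounded in terms of $|G/G^+| \le J(n)$; for $l$ larger than this bound the obstruction is prime to $l$ and can be trivialized by passing to a bounded central extension $\wt G \onto G$ after enlarging $L$ to contain sufficiently many roots of unity (in a way depending only on $n$). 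This yields a tensor decomposition $r \cong \wt\rho \otimes \tau$ in which $\wt\rho$ extends $\rho$ and $\tau$ factors through a quotient of $\wt G$ of order bounded by a function of $n$.

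If $e > 1$, the Artin-type factor $\tau$ has image of bounded prime-to-$l$ order, so lifts by Schur--Zassenhaus (or by general lifting of tame bounded-order representations) to $r_1:\Gamma_0 \to \GL_e(\cO_L)$; setting $r_2 := \wt\rho$ gives the decomposition of case (3) with $m = e > 1$ and $m' = n/e$, and the bound on $|\mathrm{im}(r_1)|$ comes directly from $J(n)$. If instead $e = 1$, then $\rho = r|_{G^+}$ is absolutely irreducible of dimension $n$ on a group essentially of Lie type in characteristic $l$; Snowden--Wiles and White imply $M$-bigness for $G^+$ provided $l > C(M,n)$, and standard propagation arguments (as in Lemma 2.5.5 of \cite{cht}) promote $M$-bigness from $G^+$ to $G$, since $[G:G^+]$ is bounded and prime to $l$.

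I expect the main obstacle to be the Clifford-theoretic bookkeeping: trivializing the Schur multiplier obstruction over a number field $L$ that depends only on $n$, so that $r_1$ genuinely lands in $\GL_e(\cO_L)$, and verifying that the Snowden--Wiles/White $M$-bigness criterion applies uniformly to all finite Lie-type groups that can arise from Larsen--Pink. Minor subcases (for instance, $G^+$ itself of bounded order, or $\rho$ one-dimensional) should be absorbed into case (3) by taking $r_2$ or $\wt\rho$ trivial respectively, provided $n > 1$.
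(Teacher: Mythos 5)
Your route (a Larsen--Pink-type structure theorem plus Clifford theory) is genuinely different from the paper's (Aschbacher--Dynkin applied inductively on $n$, together with the bespoke notion of ``sturdy'' subgroups), but as written it has two real gaps. The first is the step ``Snowden--Wiles and White imply $M$-bigness for $G^+$'' in your $e=1$ branch. The Lie-type kernel produced by a Larsen--Pink-type theorem is in general a \emph{product} of groups $\cD(\cH_i(\F_{q_i}))$ over \emph{different} subfields $\F_{q_i}\subseteq k$, acting on $k^n$ through a tensor product of Steinberg twisted-tensor-product representations, and the theorem places no bound on $[k:\F_l]$. The Snowden--Wiles/White results apply to the image of a single algebraic representation of bounded norm of a single group over one field; trying to reduce to that situation by restriction of scalars makes the norm grow with $[k:\F_l]$, so their constants would not depend only on $(M,n)$. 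In particular the fourth condition of $M$-bigness --- producing $h$ with a simple eigenvalue $\alpha$ such that $\alpha^M\neq\beta^M$ for every other root and such that every irreducible submodule of $\mathfrak{gl}_n(k)$ has nonzero $(\pi_{h,\alpha},i_{h,\alpha})$-component --- requires elements of a product of maximal tori over the various fields that are ``regular in combination''; this is precisely the content of the paper's Section 4 (the very-regular-elements lemmas and the proof that sturdy subgroups are $M$-big), i.e.\ the technical heart of the result, and it cannot simply be cited.

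The second gap is in how you reach case (3). The statement requires honest representations $r_1:\Gamma_0\to\GL_m(\cO_L)$ and $r_2:\Gamma_0\to\GL_{m'}(k)$ with $r=\bar{r}_1\otimes r_2$ \emph{exactly}. Your Clifford construction produces the two tensor factors only as representations of a central extension $\tilde{G}\onto G=r(\Gamma_0)$, i.e.\ as projective representations of $\Gamma_0$; since $\Gamma_0$ maps onto $G$ and not onto $\tilde{G}$, the assertion that $\tau$ ``lifts by Schur--Zassenhaus to $r_1:\Gamma_0\to\GL_e(\cO_L)$'' is not available. A projective representation of $\Gamma_0$ with finite bounded prime-to-$l$ image need not lift to a linear one: the obstruction is the pullback of a Schur-multiplier class of the image and can be nonzero (already for a quotient $(\Z/2)^2\subset\PGL_2$). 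What makes the paper's bookkeeping work is that whenever its induction reaches case (3), the characteristic-zero factor is already an honest homomorphism of the group --- it is obtained either by lifting an entire image of prime-to-$l$ order (the extraspecial and bounded almost-simple cases) or by $\otimes$-induction of such a lift --- and only then can the remaining projective factor be corrected by a scalar coboundary to give $r=\bar{r}_1\otimes r_2$. Your argument needs an additional idea guaranteeing that the bounded tensor factor is realized linearly on $\Gamma_0$ itself, not merely on $\tilde{G}$; without it, the $e>1$ branch does not establish alternative (3).
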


\begin{remark} It is natural to ask whether the constant $C(m,n)$ is effective. The
answer is that, in principle, a sufficiently assiduous study of the proofs here and
in the various papers cited here should allow one to determine effective bounds on
$C(m,n)$. (The same holds for the similar constants $D(n,N,a)$ and $E(n,m,a)$ below.)
On the other hand, one would expect these bounds to be rather large, 
and so we will not attempt this arduous calculation here.
\end{remark}

\begin{remark} The condition that $l\nmid[k:\F_l]$ is probably harmless, but in case
it were ever problematic, it is worth mentioning that it is effectively dispensible. In 
particular, one of the requirements of a subgroup $\Gamma<\GL_n(k)$ being ($M$-)big is 
that it have no $l$ power order quotient. In all applications we know of, however,
it suffices to have the weaker property that any normal subgroup $\Gamma'$ of $l$-power
index still satisfies all the other properties defining ($M$-)bigness, and one could imagine 
replacing the definition of bigness with one only demanding this weaker property.
If we were to make this change in the definition, the condition that $l\nmid[k:\F_l]$
in the theorem could be removed.
\end{remark}

In applications, one will often be working with crystalline Galois representations which have
regular Hodge-Tate numbers, and one may have the flexibility to choose $l$ large compared to
those Hodge-Tate numbers. (For instance, one might be working with a 
compatible family of Galois representations.)
In this case, we may suppress the third alternative in Theorem \ref{main-thm}.
\begin{lem} \label{regular weights lemma}
Suppose $n$ and $N$ are positive integers, $F$ is a number field and that for each
embedding $\tau:F\into\C$, we have chosen a set 
$a_\tau$ of $n$ distinct integers. Then there exists a number 
$D(n,N,a)$ with the following property. Whenever:
\begin{itemize}
\item  $l>D(n,N,a)$ is a rational prime which is unramified in $F$, 
\item $L\subset\Qbar_l$ is a finite extension of $\Q_l$ (with ring of integers $\cO_L$, residue field $k$),
\item $\rho:G_F\to\GL_n(\cO_L)$ is a crystalline Galois representation, and
\item $\iota:\bar{\Q}_l\isoto\C$ is an isomorphism such for each embedding 
$\sigma:F\into\bar{\Q}_l$ that the multiset of Hodge-Tate numbers of $\rho$
with respect to it is precisely the set $a_{\iota\circ\sigma}$ (and in particular, 
the Hodge-Tate numbers are distinct),
\end{itemize}
then we can never find an integer $m>1$ and some Artin representation 
$\rho':G_F\to\GL_m(\cO_L)$ with image of order at most $N$, and a mod $l$ 
representation $r'':G_F\to\GL_{m'}(k)$, such that  $\bar{\rho}\cong \bar{\rho}'\otimes r''$.
\end{lem}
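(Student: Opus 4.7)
The plan is to argue by contradiction, extracting from a hypothetical decomposition $\bar{\rho} \cong \bar{\rho}' \otimes r''$ (with $m > 1$) a violation of the distinctness of the Hodge--Tate numbers of $\rho$. Since $\rho'$ has image of order at most $N$ and $l > N$, the reduction map $\rho'(G_F) \to \bar{\rho}'(G_F)$ is injective (its kernel would be a pro-$l$ subgroup of a finite group of order prime to $l$). Let $F^* \subset \Qbar$ be the fixed field of $\ker \bar{\rho}'$; then $[F^*:F] \leq N$, and $\bar{\rho}'|_{G_{F^*}}$ is trivial, so
$$\bar{\rho}|_{G_{F^*}} \cong (r''|_{G_{F^*}})^{\oplus m}.$$
Since Hodge--Tate--Sen weights are preserved under passage to an open subgroup, the HT weights of $\rho|_{G_{F^*}}$ at each embedding $\tilde{\sigma}: F^* \hookrightarrow \Qbar_l$ are still the distinct integers $a_{\iota \circ (\tilde{\sigma}|_F)}$.

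Now work locally at a place $v | l$ of $F$ and $v^* | v$ of $F^*$. The ramification index $e(F^*_{v^*}/F_v)$ divides $[F^*:F] \leq N$ and is therefore prime to $l$; combined with the hypothesis that $F_v/\Q_l$ is unramified, this gives that $F^*_{v^*}/\Q_l$ is tame with $e \leq N$. For $l$ sufficiently large compared to $N$ and the spread of the $a_\sigma$, integral $l$-adic Hodge theory (Fontaine--Laffaille, or its ramified/tame analogue via Breuil--Kisin modules) applies to $\rho|_{G_{F^*_{v^*}}}$. The associated filtered module of its mod-$l$ reduction is $m$ copies of the module attached to $r''|_{G_{F^*_{v^*}}}$, so each graded piece has dimension divisible by $m$. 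Lifting, the HT weights of $\rho|_{G_{F^*_{v^*}}}$ appear with multiplicities divisible by $m > 1$, contradicting the distinctness of the $a_\sigma$.

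The main obstacle is this last step: accommodating the (possibly) tame ramification of $F^*/F$ at primes above $l$, since classical Fontaine--Laffaille theory is only set up for crystalline representations over unramified bases. For $l$ large compared to $N$, the Breuil--Kisin formalism handles this, since the tame ramification is small relative to $l$. An alternative route that avoids integral theory altogether is to pass to the characteristic-zero semisimplification of $\rho|_{G_{F^*}}$ and argue constituent by constituent using the Sen operator: each absolutely irreducible constituent $W$ with $\bar{W}^{\ssg} \cong V^{\oplus c}$ (for $V$ irreducible mod $l$) has HT weights of multiplicity divisible by $c$, because the Sen operator of $V^{\oplus c}$ has $c$-fold eigenvalue degeneracy and, for $l$ large enough that distinct values among the $a_\sigma$ remain distinct mod $l$, this degeneracy lifts to the characteristic-zero eigenvalues; summing over constituents of $\rho|_{G_{F^*}}$ recovers the divisibility by $m$. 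Either route determines the threshold $D(n,N,a)$.
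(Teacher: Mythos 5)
Your reduction to $\bar{\rho}|_{G_{F^*}}\cong (r''|_{G_{F^*}})^{\oplus m}$ is fine, but the decisive step---that this forces the Hodge--Tate multiplicities of $\rho$ to be divisible by $m$---is exactly where the content of the lemma lies, and neither of your two justifications establishes it. Passing to $F^*$ destroys the hypothesis the lemma is built around: $F^*$ is in general (tamely) ramified above $l$, so classical Fontaine--Laffaille no longer applies over $F^*_{v^*}$, and the appeal to a ``Breuil--Kisin analogue'' is an appeal to a substantial body of integral theory over ramified bases (full faithfulness of the torsion functor in the relevant range, closure of its essential image under direct summands, and the fact that the graded pieces of the torsion module compute the Hodge--Tate multiplicities of the lattice) which you neither prove nor cite in a usable form; as written it is an assertion, not an argument. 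The Sen-operator alternative is unsound as stated: a mod $l$ representation has no Sen operator, and the principle that ``the degeneracy lifts to characteristic zero once the $a_\sigma$ are distinct mod $l$'' is false in general---the crystalline characters $1$ and $\chi_{\mathrm{cyc}}^{l-1}$ of $G_{\Q_l}$ have Hodge--Tate weights $0$ and $l-1$, which are distinct mod $l$, yet they have identical reductions---so recovering weight multiplicities from the reduction is precisely the nontrivial input of integral $l$-adic Hodge theory in a suitable small-weight range (now over a ramified base), and invoking it this way begs the question.

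The paper's proof avoids the ramified-base problem entirely by never trivializing $\bar{\rho}'$. It stays over an \emph{unramified} extension $K/F_v$, where Fontaine--Laffaille applies on the nose, writes the semisimplification of $\bar{\rho}|_{I_K}$ as a sum of powers of the fundamental character of niveau $[L:\Q_l]$ whose $l$-adic digits are the Hodge--Tate numbers, and uses the hypothesis on $\rho'$ only through the fact that the eigenvalues of $\bar{\rho}'(\gamma)$ are roots of unity of order at most $N!$: the tensor decomposition then forces two of these inertial characters to differ by a root of unity of bounded order, and for $l>D(n,N,a)$ an elementary digit-by-digit comparison (the injectivity claim in the paper's proof) forces two Hodge--Tate numbers to coincide, contradicting regularity. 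To salvage your approach you would have to carry out the tame-base integral theory in detail; the simpler fix is to drop the passage to $F^*$ and run this bounded-order eigenvalue-ratio argument over the unramified base.
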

\begin{cor} Suppose $n$ and $M$ are positive integers, $F$ is a number field and 
that for each embedding $\sigma:G\into\Qbar_l$,
$a_\sigma$ is a set of $n$ distinct integers. Then there exists a number $E(n,M,a)$ 
with the following property. Whenever:
\begin{itemize}
\item  $l>E(n,M,a)$ is a rational prime which is unramified in $F$, 
\item $L\subset\Qbar_l$ is a finite extension of $\Q_l$ (ring of integers $\cO_L$, residue field $k$), 
\item $\rho:G_F\to\GL_n(\cO_L)$ is a crystalline Galois representation, and
\item $\iota:\bar{\Q}_l\isoto\C$ is an isomorphism such for each embedding 
$\sigma:F\into\bar{\Q}_l$ that the multiset of Hodge-Tate numbers of $\rho$
with respect to it is precisely the set $a_{\iota\circ\sigma}$ (and in particular, 
the Hodge-Tate numbers are distinct),
\end{itemize}
and we moreover have that $\bar{\rho}(G_F)$ fails to be an $M$-big subgroup of $\GL_n(k)$, 
then we may conclude that one of the following holds:
\begin{enumerate}
\item $\bar{\rho}$ does not act absolutely irreducibly on $k^n$, or
\item there is a proper subgroup $G'<G_F$ and representation 
$r':G'\to \GL_m(k)$ such that $\bar{\rho}=\Ind_{G'}^{G_F} r'$.
\end{enumerate}
\end{cor}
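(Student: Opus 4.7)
The plan is to deduce the corollary by combining Theorem \ref{main-thm} with Lemma \ref{regular weights lemma}: the main theorem applied to $\bar\rho$ produces one of three alternatives, two of which already coincide with the conclusions (1)--(2) of the corollary, so the entire task is to use Lemma \ref{regular weights lemma} to rule out the third.

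The key bookkeeping observation is that in alternative (3) of Theorem \ref{main-thm}, the representation $r_1:\Gamma_0\to\GL_m(\cO_L)$ has open kernel and image of order bounded by some integer $N(n)$ depending only on $n$; consequently $r_1$ factors through a finite quotient of $G_F$ and is an Artin representation in the precise sense required by Lemma \ref{regular weights lemma}. Accordingly, I would set
\[
E(n,M,a) := \max\!\bigl(C(M,n),\; D(n,N(n),a)\bigr),
\]
where $C(M,n)$ is the constant from Theorem \ref{main-thm} and $D(n,N(n),a)$ is the constant from Lemma \ref{regular weights lemma}.

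Now suppose $l>E(n,M,a)$, $l$ is unramified in $F$, the Hodge--Tate hypothesis holds, and $\bar\rho(G_F)$ fails to be $M$-big. Applying Theorem \ref{main-thm} with $\Gamma_0=G_F$ and $r=\bar\rho$ (and invoking Remark 2 if needed to dispose of the side condition $l\nmid[k:\F_l]$) forces one of the three cases. Cases (1) and (2) are already exactly the conclusions of the corollary, so we would be done in those cases. In case (3) we would obtain $m>1$, an Artin representation $r_1:G_F\to\GL_m(\cO_L)$ with $|r_1(G_F)|\le N(n)$, and a representation $r_2:G_F\to\GL_{m'}(k)$ such that $\bar\rho\cong\bar r_1\otimes r_2$; but since $l>D(n,N(n),a)$ and the Hodge--Tate weights are distinct, Lemma \ref{regular weights lemma} forbids the existence of such a decomposition, giving a contradiction.

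There is no real obstacle here: the corollary is simply a clean repackaging of Theorem \ref{main-thm} together with Lemma \ref{regular weights lemma}, where the only nontrivial ingredient is the a priori bound $N(n)$ on the image of the Artin factor in case (3), which the last sentence of the main theorem already supplies.
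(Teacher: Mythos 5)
Your proof is correct and is exactly the argument the paper intends: the paper itself notes that the corollary is immediate from Theorem \ref{main-thm} together with Lemma \ref{regular weights lemma}, with the bound on the image of $r_1$ in terms of $n$ (the last sentence of the theorem) supplying the constant $N(n)$ needed to invoke the lemma, and $E(n,M,a)=\max(C(M,n),D(n,N(n),a))$ doing the bookkeeping just as you describe.
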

\noindent (The lemma is proved at the end of \S \ref{ss: main result}; the corollary is then immediate 
given the theorem.)

\medskip
We now say something about the methods used to prove the our main result above. 
There are three main ingredients. The first is, as we have already mentioned, the ideas of \cite{snowden2009bigness} (and their generalizations in \cite{pjw}). 
We are unable to directly apply their results, and instead
will have to look `under the hood' a little, recapitulating some arguments from their 
paper in slightly modified settings. (See \S\ref{sec: sturdy}.) The second is the 
Aschbacher-Dynkin theorem, a classification of the maximal subgroups of $\GL_n(k)$
not containing $\SL_n(k)$ (here $k/\F_l$ is a finite extension). We prove our main theorem
by applying the Aschbacher-Dynkin theorem to the image of the representation in
question, and then breaking into cases according to what kind of maximal subgroup
the image lies in. In most of these cases, we are either done immediately or can reduce 
to a problem concerning a smaller-dimensional representation for which we may 
inductively assume our result holds, and from this deduce the result for the original 
representation. 

The remaining case is where the image of our subgroup in 
$\GL_n(k)/k^\times$ is an almost simple group. In this case, we apply our third ingredient:
results of Larsen which provide good control over which almost simple
groups might arise in such a situation. In particular, they are either drawn from a finite 
set of groups depending only on $n$ and not on $l$ (in this case, we can arrange for $l$ 
to be large enough that such a representation lifts to characteristic zero), or are groups
of Lie type in characteristic $l$ (in which case the image of our representation is essentially
the image of an algebraic representation, and we can finish our argument by appeal
to the ideas of Snowden and Wiles we have already mentioned.)

Of course, there are various wrinkles---most notably, our inductive argument actually 
proves something slightly stronger than the main theorem, so that our inductive hypothesis 
is strong enough when we need to use it. 

\medskip
We now explain the organization of the rest of this paper. In section \ref{sec: big image},
we recall the definitions of `big' and `$M$-big', and also some elementary properties of
bigness proved by Snowden and Wiles (and also their analogues for $M$-bigness, which
are due to White in \cite{pjw}). In section \ref{sec: group theory} we state the various 
group-theoretical tools we will be using, most notably stating the Aschbacher-Dynkin 
theorem in the form we will use it, and recalling the results of Larsen mentioned above.
In section \ref{sec: sturdy} we introduce the notion of a `sturdy' subgroup of $\GL_n(k)$,
for $k/\F_l$ finite, showing that sturdy subgroups are big and have various desirable
properties. The notion of `sturdiness' is very closely related to the notion of `being the
image of an algebraic representation', and our arguments here draw heavily on 
\cite{snowden2009bigness,pjw}.
Finally, in section \ref{sec: main result} we combine these results to prove the main theorem,
and close the section with some (very vague) comments about the degree to which we 
expect representations which satisfy one of the properties (1)--(3) in Theorem \ref{main-thm}
will nonetheless have big image.

{\bf Acknowledgements.} It will be clear to the reader how much this work depends on
the ideas of Andrew Snowden and Andrew Wiles, and it is a pleasure to acknowledge
my debt to them. I also thank them for keeping me updated on the progress 
of revisions to \cite{snowden2009bigness}, and for encouraging me to press ahead in 
proving the results in this paper. Finally, I am grateful to Toby Gee and Andrew Snowden
for providing very helpful comments on an earlier draft of this paper.

\section{Big image} \label{sec: big image}

\subsection{}
We begin by recalling the definition of $M$-big from \cite{BLGHT}
(see Definition 7.2 there).
\begin{defn}
\label{defn: m-big}
  Let $k/\F_l$ be algebraic and $m$ a positive integer. We say that a
  subgroup $H\subset\GL_n(k)$ of $\GL_n(k)$ is $M$-\emph{big} if the
  following conditions are satisfied.
  \begin{itemize}
  \item $H$ has no $l$-power order quotient.
  \item $H^0(H,\mathfrak{sl}_n(k))=(0)$.
  \item $H^1(H,\mathfrak{sl}_n(k))=(0)$.
  \item For all irreducible $k[H]$-submodules $W$ of
    $\mathfrak{gl}_n(k)$ we can find $h\in H$ and $\alpha\in k$ such
    that:
    \begin{itemize}
    \item $\alpha$ is a simple root of the characteristic polynomial
      of $h$, and if $\beta$ is any other root then
      $\alpha^M\ne\beta^M$.
    \item Let $\pi_{h,\alpha}$ (respectively $i_{h,\alpha}$) denote
      the $h$-equivariant projection from $k^n$ to the
      $\alpha$-eigenspace of $h$ (respectively the $h$-equivariant
      injection from the $\alpha$-eigenspace of $h$ to $k^n$). Then
      $\pi_{h,\alpha}\circ W\circ i_{h,\alpha}\ne 0$.
    \end{itemize}
\end{itemize}
\end{defn}
We will use `big' as a synonym for `$1$-big'. This is consistent with the definition of `big' in \cite{cht}.

The following lemmata establish basic properties of bigness and $M$-bigness which 
will be constantly of use to us. They were essentially proved in \cite{snowden2009bigness}.
(The statements in that paper are in the context of bigness, but
the proofs trivially extend to $M$-bigness, as was noted in \cite{pjw}---see section 2 of that paper.)

\begin{prop} \label{normal subgroups and bigness}
Let $k/\F_l$ be algebraic, and let $M$ and $n$ be positive integers. If $G<\GL_n(k)$ has a normal
subgroup $H$, of index prime to $l$, which is $M$-big, then $G$ is $M$-big.
\end{prop}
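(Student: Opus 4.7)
The plan is to verify each of the four bullet points in Definition \ref{defn: m-big} for $G$, using the corresponding bullet point for $H$ together with the fact that $[G:H]$ is prime to $l$.

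For the first condition, suppose $G/N$ is an $l$-power-order quotient. Then $H/(H\cap N)$ injects into $G/N$, so it is both an $l$-group and a quotient of $H$; since $H$ has no $l$-power-order quotient, this forces $H \subseteq N$. But then $G/N$ is a quotient of $G/H$, whose order is prime to $l$, so $G/N$ is trivial. (This is the only point where the prime-to-$l$ hypothesis enters essentially.)

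For the second and third conditions, write $V=\mathfrak{sl}_n(k)$. The inclusion $H^0(G,V)\subseteq H^0(H,V)=0$ is immediate. For $H^1$, apply the inflation-restriction sequence
\[
0 \to H^1(G/H, V^H) \to H^1(G,V) \to H^1(H,V)^{G/H}.
\]
The left term vanishes since $V^H = H^0(H,V)=0$, and the right term is contained in $H^1(H,V)=0$; hence $H^1(G,V)=0$.

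For the fourth condition, let $W\subseteq \mathfrak{gl}_n(k)$ be an irreducible $k[G]$-submodule. Then $W$, viewed as a $k[H]$-module, contains some irreducible $k[H]$-submodule $W'$. Apply the $M$-bigness of $H$ to $W'$ to obtain an element $h\in H\subseteq G$ and a scalar $\alpha\in k$ such that $\alpha$ is a simple root of the characteristic polynomial of $h$ with $\alpha^M\ne\beta^M$ for every other root $\beta$, and such that $\pi_{h,\alpha}\circ W'\circ i_{h,\alpha}\ne 0$. Since $W'\subseteq W$, we also have $\pi_{h,\alpha}\circ W\circ i_{h,\alpha}\ne 0$, so the same $h$ and $\alpha$ witness the fourth condition for $W$. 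The proof is entirely formal, and there is no substantive obstacle; the only thing one must keep in mind is that the prime-to-$l$ index hypothesis is needed solely to propagate the `no $l$-power quotient' property from $H$ to $G$.
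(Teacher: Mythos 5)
Your proof is correct. The paper does not prove this proposition itself but defers to Snowden--Wiles (and to White for the $M$-big case), and your bullet-by-bullet verification --- the quotient condition via the prime-to-$l$ index, $H^0$ by inclusion of invariants, $H^1$ by inflation--restriction using $V^H=H^0(H,\mathfrak{sl}_n(k))=0$, and the eigenvalue condition by passing to an irreducible $k[H]$-submodule of the given $k[G]$-module --- is essentially the same standard argument given in those references.
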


\begin{prop} \label{bigness and scalars}
Let $k/\F_l$ be algebraic, and let $M$ and $n$ be positive integers. Suppose $G<\GL_n(k)$. 
Then $G$ is $M$-big if and only if $k^\times G$ is.
\end{prop}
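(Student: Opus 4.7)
The plan is to verify each of the four defining properties of $M$-bigness (Definition \ref{defn: m-big}) independently, showing each holds for $G$ if and only if it holds for $G' := k^\times G$. Two observations will be used throughout: (i) $k^\times$ lies in the center of $\GL_n(k)$, so scalars act trivially by conjugation on both $\mathfrak{gl}_n(k)$ and $\mathfrak{sl}_n(k)$; and (ii) since $k$ is algebraic over $\F_l$, the group $k^\times$ is a torsion abelian group every element of which has order prime to $l$.

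Conditions (1), (2), and (4) should be essentially formal. For (1), any homomorphism from $G$ or from $G'$ onto a nontrivial $l$-group must kill $k^\times \cap G$ (respectively $k^\times$), because these are $l'$-torsion, so both descend to the common quotient $\overline{G} := G/(G \cap k^\times) = G'/k^\times$; hence $G$ admits an $l$-power quotient iff $\overline{G}$ does iff $G'$ does. For (2), the actions of $G$ and $G'$ on $\mathfrak{sl}_n(k)$ both factor through $\overline{G}$, so the spaces of invariants coincide. For (4), the $k[G]$-submodules of $\mathfrak{gl}_n(k)$ agree with the $k[G']$-submodules, since scalars act trivially. Given a witness $(h,\alpha) \in G' \times k$ for condition (4) for $G'$, I would write $h = cg$ with $c \in k^\times$ and $g \in G$: the eigenvalues of $g$ are those of $h$ divided by $c$, the $\alpha$-eigenspace of $h$ is the $(c^{-1}\alpha)$-eigenspace of $g$, and the projection and injection maps are literally the same, so $(g, c^{-1}\alpha)$ is a witness for $G$. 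The reverse direction is immediate: a witness for $G$ is automatically a witness for $G'$.

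The only delicate part is condition (3). Here the plan is to apply inflation--restriction to the short exact sequence $1 \to G \to G' \to Q \to 1$, where $Q := G'/G \cong k^\times/(k^\times \cap G)$; note $Q$ is abelian, so $G$ is indeed normal in $G'$. The group $Q$ is a quotient of $k^\times$, hence $l'$-torsion; its action on $G$ by conjugation is trivial because $k^\times$ is central; and its action on $\mathfrak{sl}_n(k)$ is trivial because scalars act trivially. I would then argue that $H^i(Q, \mathfrak{sl}_n(k)) = 0$ for $i \ge 1$: for finite $Q$ this is the standard vanishing theorem for cohomology of a group whose order is coprime to the characteristic of the coefficients, and the general case follows by writing $Q$ as a directed union of finite subgroups and passing to the colimit. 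Combined with the fact that $Q$ acts trivially on $H^1(G, \mathfrak{sl}_n(k))$ (since any lift of $q \in Q$ to $G'$ is of the form $ch$ with $c \in k^\times, h \in G$, whose conjugation action on $G$ is that of $h \in G$, i.e.\ inner), the five-term inflation--restriction sequence yields an isomorphism $H^1(G', \mathfrak{sl}_n(k)) \cong H^1(G, \mathfrak{sl}_n(k))$, giving the equivalence of condition (3). The principal technical hurdle should be the case when $Q$ (equivalently, $k^\times$) is infinite; the reduction to finite subgroups via directed colimits is what needs to be executed carefully there.
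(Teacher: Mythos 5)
Your reductions for conditions (1), (2) and (4) of Definition \ref{defn: m-big} are correct: killing the prime-to-$l$ torsion central subgroup identifies the relevant quotients and module structures, and your transfer of a witness $(h,\alpha)$ to $(g,c^{-1}\alpha)$ for $h=cg$ is exactly right. (For what it is worth, the paper itself gives no proof of this proposition --- it is quoted from \cite{snowden2009bigness} and \cite{pjw}, whose verification is an elementary direct check of the same kind.) The one genuine problem is in your treatment of condition (3), at the step ``$H^i(Q,\mathfrak{sl}_n(k))=0$ for $i\ge 1$ \dots\ the general case follows by writing $Q$ as a directed union of finite subgroups and passing to the colimit.'' Group cohomology is contravariant in the group and does \emph{not} commute with filtered colimits of groups (only homology does), so this reduction fails as stated; and since $k/\F_l$ is only assumed algebraic, $k^\times$ and hence $Q$ may be infinite, so the finite case does not suffice. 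Note also that your five-term argument really does need $H^2(Q,\mathfrak{sl}_n(k)^G)=0$, not just $H^1$, in the direction ``$k^\times G$ $M$-big $\Rightarrow$ $G$ $M$-big'': with $H^1(Q,\cdot)=0$ and the trivial $Q$-action on $H^1(G,\mathfrak{sl}_n(k))$, exactness only gives an injection $H^1(G,\mathfrak{sl}_n(k))\into H^2(Q,\mathfrak{sl}_n(k)^G)$.

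The vanishing you want is true, and the gap can be repaired in either of two ways. First, since $Q$ acts trivially, universal coefficients gives $H^n(Q,M)\cong \Hom(H_n(Q,\Z),M)\oplus\Ext^1_{\Z}(H_{n-1}(Q,\Z),M)$; homology \emph{does} commute with directed colimits, so for $n\ge 1$ the groups $H_n(Q,\Z)$ are torsion of order prime to $l$, and both $\Hom$ and $\Ext^1$ from such a group into an $\F_l$-vector space vanish (multiplication by $l$ is invertible on the source and zero on the target). Second, and more simply, you can bypass $H^2$ altogether: given a cocycle $c:G\to\mathfrak{sl}_n(k)$, its restriction to the central subgroup $k^\times\cap G$ (which acts trivially) is a homomorphism from a prime-to-$l$ torsion group to an $l$-torsion group, hence zero; therefore $c'(ug):=c(g)$, for $u\in k^\times$ and $g\in G$, is a well-defined cocycle on $k^\times G$ extending $c$, and if $H^1(k^\times G,\mathfrak{sl}_n(k))=0$ then $c'$, hence $c$, is a coboundary. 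Combined with your easy direction, which uses only $H^1(Q,\mathfrak{sl}_n(k)^G)=\Hom(Q,\mathfrak{sl}_n(k)^G)=0$, this completes condition (3) and with it the proposition.
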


\section{Group theory}\label{sec: group theory}

\subsection{} We remind ourselves of a simple form of the Aschbacher-Dynkin theorem,
mostly to fix notation for what follows. The theorem is due independently to 
Aschbacher and Dynkin, and the version we use here is somewhat less powerful
than the full version in the original (independent) arguments of Aschbacker
(see \cite{aschbacher1984maximal}) and Dynkin (see \cite{dynkin1952maximal} for
the original paper in Russian, or \cite{dynkin2000maximal} for an English translation).
See \cite[\S 3.10.3]{wilsonfinitesimple} for the theorem proved in the precise form
we shall use it. Let us first define certain subgroups of the general linear groups over
a finite field.

\begin{defn} \label{def: Aschbacher Dynkin subgps}
Let $l$ be a prime, $k/\F_l$ a finite extension.
Let us describe certain subgroups of $\GL_n(k)$.
\begin{enumerate}
\item For each linear subspace $V\subset k^n$ of dimension $d$, $0<d<n$, we have a
subgroup $G_V$ which sends $V$ onto itself. We have 
$$G_V\cong (l^{[k:\F_l]d(n-d)})\rtimes(\GL_d(k)\times\GL_{n-d}(k))$$
(where $(l^{[k:\F_l]d(n-d)})$ denotes some group of order 
$l^{[k:\F_l]d(n-d)}$). (For obvious reasons, 
we call $G_V$ a \emph{reducible} subgroup.)
\item For each direct sum decomposition $k^n=V_1\oplus\dots\oplus V_m$,
where $m|n$ and each $V_i$ has dimension $n/m$,  we have a subgroup 
$G_{V_1\oplus\dots\oplus V_m}$ which stabilizes the direct sum decomposition
(but need not stabilize the individual summands in the direct sum). We have
$G_{V_1\oplus\dots\oplus V_m}\cong \GL_{n/m}(k)\wr S_m$
and we call $G_{V_1\oplus\dots\oplus V_m}$ an \emph{imprimitive} subgroup.
\item For each decomposition $k^n\equiv V_1\otimes V_2$, where 
$n=\dim V_1\dim V_2$, we have a subgroup $G_{V_1\otimes V_2}$ stabilizing
the tensor product decomposition. We have 
$G_{V_1\otimes V_2}\cong \GL_{n_1}(k)\circ\GL_{n_2}(k)$, where $n_i=\dim V_i$
and $\circ$ denotes the central product.
\item For each decomposition $k^n\equiv V_1\otimes \dots\otimes V_m$, where 
the $V_i$ all have the same dimension ($d$ say), and so $d^m=n$, we have 
a subgroup $G_{V_1\otimes \dots\otimes V_m}$ of 
transformations which preserve this tensor product decomposition 
(but which may rearrange the factors amongst themselves). We have
$$k^\times G_{V_1\otimes \dots\otimes V_m}/k^\times\cong \PGL_d(k)\wr S_m.$$
\item  If $n=p^m$ for some odd prime $p$, then we have certain subgroups 
of $\GL_n(\bar{\Q})$ isomorphic to the `extraspecial' group $p_+^{1+2m}$, whose
normalizers in $\GL_n(\bar{\Q})$ are isomorphic to 
$p_+^{1+2m}\rtimes \Sp_{2m}(\F_p)$. (See \S3.10.2 of \cite{wilsonfinitesimple}.)
Assuming our finite field $k$ is large enough, these subgroups reduce mod $k$
to give a subgroup of $\GL_n(k)$. We can form a larger subgroup, $G_{p^{1+2m}}$,
by additionally including all central elements, and taking subgroup this generates.

If $n=2^m$ for some $m$, we have similar subgroups  $2_\eps^{1+2m}$ 
of $\GL_n(\bar{\Q})$ where $\eps\in\{+,-\}$.  whose normalizers are 
$2_\eps^{1+2m}\rtimes \GO_{2m}^\eps(\F_2)$, and again, if $k$ is large enough,
these subgroups reduce mod $k$
to give a subgroup of $\GL_n(k)$. We can form a larger subgroup, $G_{2_{\eps}^{1+2m}}$,
by additionally including all central elements, and taking subgroup this generates.

We call $G_{p^{1+2m}}$ and $G_{2_{\pm}^{1+2m}}$ \emph{subgroups of extraspecial type}.

\item Let $H$ be an almost simple group (a group $H$ satisfying $G<H<\Aut G$ for some
simple group $G$). Let $\bar{r}:H\to \PGL_n(k)$ be an irreducible projective modular 
representation. Then we can form a subgroup $G_{\bar{r}}:=\bar{r}(H)k^\times $.
\end{enumerate}
\end{defn}

\begin{thm}[Aschbacher, Dynkin] \label{thm: Aschbacher Dynkin thm}
Let $l$ be a prime, $k/\F_l$ a finite extension.
Let $G$ be a subgroup of $\GL_n(k)$ which does not contain $\SL_n(k)$. Then
$G$ is contained in one of the subgroups of the forms listed 
in Definition \ref{def: Aschbacher Dynkin subgps}. 
\end{thm}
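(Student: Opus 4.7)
The strategy is to analyze the action of $G$ on the natural module $V := k^n$ by a chain of structural alternatives, at each step either placing $G$ inside one of the six types of subgroup listed, or deducing a further restriction on its structure. Without loss of generality I may enlarge $G$ to a maximal subgroup of $\GL_n(k)$ not containing $\SL_n(k)$; and since each of the subgroups in Definition \ref{def: Aschbacher Dynkin subgps} contains the scalars $k^\times$, I may assume $G \supset k^\times$.

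First, if $G$ does not act irreducibly on $V$, it stabilizes a proper nonzero subspace $V' \subset V$ and so lies in a type (1) reducible subgroup $G_{V'}$; assume henceforth that $G$ acts irreducibly. Next, if $V$ admits a nontrivial $G$-permuted direct sum decomposition $V = V_1 \oplus \cdots \oplus V_m$ (with $m > 1$), irreducibility forces $G$ to permute the summands transitively, so $\dim V_i = n/m$ and $G$ sits in a type (2) wreath product; assume then that $G$ is primitive as well. At this point I would invoke Clifford theory on the Fitting subgroup $F(G)$. If $F(G)$ has a noncentral abelian part, primitivity together with Clifford's theorem (applied separately for each prime dividing $|F(G)|$) produces a $G$-stable tensor factorization $V \cong V_1 \otimes V_2$, or more generally a multi-factor decomposition $V \cong V_1 \otimes \cdots \otimes V_m$ with equidimensional factors permuted by $G$, placing $G$ in a subgroup of type (3) or (4). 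If instead $F(G)$ has a nonabelian normal $p$-subgroup acting irreducibly on a tensor factor, the structure theorem for finite irreducible $p$-groups forces this subgroup to be of extraspecial type $p_{+}^{1+2m}$ (or the analogue $2_\pm^{1+2m}$ for $p=2$), and since $G$ normalizes it, $G$ is contained in the corresponding type (5) subgroup.

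The remaining case is where $F(G)$ is central in $G$, so the generalized Fitting subgroup is $F^*(G) = E(G) Z(G)$. If $E(G)$ had two or more components, their central product would again yield a tensor decomposition of $V$, pushing $G$ back into type (3) or (4); hence $E(G)$ consists of a single quasisimple component $L$. Then $G/Z(G)$ embeds in $\Aut(L/Z(L))$ with socle $L/Z(L)$, so $G/Z(G)$ is almost simple, and its induced irreducible projective modular representation on $V$ exhibits $G$ as a type (6) subgroup $G_{\bar r}$. The hard part of the argument is the Clifford-theoretic dissection of $F^*(G)$: one has to verify systematically that the prime-by-prime reductions of $F(G)$ exhaust the possibilities (each prime divisor either contributing scalar action, a tensor factor, or an extraspecial normalizer constraint), and that these reductions descend cleanly from a splitting field back to the given $k$, for which one exploits precisely that the subgroups in Definition \ref{def: Aschbacher Dynkin subgps} are taken together with all central elements. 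This is the classical Aschbacher--Dynkin analysis in its simplified form, carried out in detail in \cite[\S 3.10.3]{wilsonfinitesimple}.
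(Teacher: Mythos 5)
The paper does not prove this statement at all: it is quoted as a theorem of Aschbacher and Dynkin, with the precise form taken from \cite[\S 3.10.3]{wilsonfinitesimple}. So your proposal must be judged as a free-standing argument, and as such it has genuine gaps; indeed its final sentence defers the decisive verifications back to the very reference the paper cites, so at best it is an annotated citation rather than a proof.

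The concrete problems are these. First, your Clifford-theoretic step is misassigned: a noncentral abelian normal subgroup of a primitive irreducible $G$ does not produce a tensor factorization of $V$. By Clifford's theorem its isotypic decomposition is either nontrivial (giving imprimitivity, type (2)) or homogeneous, and in the homogeneous case an abelian normal subgroup need not act by $k$-scalars: it can generate a proper field extension $k'$ of $k$ inside $\End_{kG}(k^n)$, so that $G$ lies in a semilinear group $\GL_{n/d}(k'){\cdot}\Gal(k'/k)$ (the normalizer of a Singer-type torus in the extreme case $d=n$). This field-extension case is exactly what your sketch never confronts --- you nowhere distinguish irreducible from absolutely irreducible action --- and since it does not appear verbatim in Definition \ref{def: Aschbacher Dynkin subgps}, you would also need to explain how it is absorbed into the six listed types; that is a genuine step, not a formality. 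Tensor decompositions arise instead from nonabelian normal subgroups (central products of several quasisimple components, or symplectic-type $p$-groups, the latter giving type (5)). Second, in your last case, knowing that $E(G)$ is a single quasisimple component with $G$ almost simple modulo scalars is not the end of the standard analysis: one must still separate out the possibilities that this component is a classical group on $V$ defined over a subfield of $k$ or the stabilizer of a bilinear, sesquilinear or quadratic form, and check that these land inside type (6) (they do, because the corresponding similitude groups modulo scalars are almost simple with socle the projective classical group, but you never say so). A smaller point: ``enlarge $G$ to a maximal subgroup of $\GL_n(k)$ not containing $\SL_n(k)$'' should read ``maximal among the subgroups not containing $\SL_n(k)$'', since a maximal subgroup of $\GL_n(k)$ containing $G$ may itself contain $\SL_n(k)$. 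Because the systematic case analysis is precisely what is hard about the Aschbacher--Dynkin theorem, and your text both skips parts of it and misroutes one branch, it does not constitute a proof; citing \cite[\S 3.10.3]{wilsonfinitesimple}, as the paper does, is the honest course.
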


\subsection{} We also recall the following result of Larsen, which is essentially a combination
of \cite[Lemmas 1.5--1.8]{larsen1995maximality}.

\begin{prop}[Larsen] \label{Larsen-prop constructing set S}
Fix a semisimple group scheme $\cG/\Z[1/N]$. Then there is
an integer $A$ and a finite collection of finite simple groups $S$, such that for all 
$l>A$, all finite extensions $k/\F_l$, and all subgroups $H<\cG(k)$ which are 
nonabelian finite simple groups, we have:
\begin{enumerate}
\item $H$ is isomorphic to some member of $S$, or
\item $H$ is a derived group of an adjoint group of Lie type, 
$H\cong \cD(\cH(\F_q))$,  $\cH$ simple, which moreover satisfies $l|q$ and
$\cD(\cH(\F_q))=\text{Im}(\cH^{sc}(\F_q)\to\cH(\F_q))$ (where $\cH^{sc}$ denotes
the simply connected cover of $\cH$).
\end{enumerate}
\end{prop}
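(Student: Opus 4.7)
The plan is to apply the Classification of Finite Simple Groups (CFSG) and address each family in turn. The $26$ sporadic groups are finite in number and can all simply be adjoined to $S$. For alternating groups $A_n$ with $n\geq 5$, the minimal dimension of a faithful linear representation grows linearly in $n$ once $n$ is moderately large, so composing a hypothetical inclusion $A_n\hookrightarrow\cG(k)$ with any fixed faithful representation of $\cG$ forces $n$ to be bounded in terms of $\dim\cG$; the finitely many surviving $A_n$ then also join $S$.

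The substantive case is that of finite simple groups of Lie type $H\cong\cD(\cH(\F_q))$ in some defining characteristic $p$. If $p=l$, we land precisely in alternative (2) of the proposition and there is nothing further to prove. In the \emph{cross-characteristic} case $p\neq l$, I would show that for $l$ beyond a threshold depending only on $\cG$, only finitely many isomorphism types of such $H$ can embed into $\cG(k)$. Because $\gcd(|H|,l)=1$ here, standard lifting arguments (Schur--Zassenhaus applied to kernels of reduction, or Brauer lifting) would allow the embedding $H\hookrightarrow\cG(\kbar)$ to be transported to an embedding $H\hookrightarrow\cG(\C)$, once one knows the lift can be arranged to stay inside $\cG$. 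One then invokes the classical fact, ultimately traceable to Jordan, that a fixed complex semisimple group admits only finitely many isomorphism types of nonabelian finite simple subgroups, and adjoins representatives to $S$.

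The main obstacle is precisely the lifting step: ensuring that the lift occurs inside $\cG$ itself, rather than inside some ambient $\GL_N$ containing $\cG$, and that it can be done uniformly in $l$. This is the heart of Larsen's Lemmas 1.5--1.8. One studies the $\Z[1/(N\cdot|H|)]$-scheme parametrizing homomorphisms (or embeddings) of each candidate finite simple group $H$ into $\cG$, verifies that it is of finite type, and uses deformation theory to move between special and generic fibres whenever $l\nmid|H|$ and $l$ exceeds an effective constant depending on $\cG$. Uniformity in $l$ then follows by taking the maximum such threshold over the finitely many possibilities for $H$ singled out above, so that a single constant $A$ works for all cross-characteristic simples simultaneously.
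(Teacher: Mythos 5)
There is a genuine gap in your treatment of the cross-characteristic case. You assert that if $H\cong\cD(\cH(\F_q))$ is of Lie type in characteristic $p\neq l$ then $\gcd(|H|,l)=1$, and you hang the whole reduction (Schur--Zassenhaus or Brauer lifting to characteristic zero, then Jordan's theorem) on that claim. But the order of a group of Lie type in characteristic $p$ is $p$-power times a product of factors of the shape $q^{d_i}-1$, and these are very often divisible by $l$: for instance $\PSL_2(\F_p)$ with $l\mid p^2-1$ is cross-characteristic yet has order divisible by $l$, and concretely $3$ divides $|\PSL_2(7)|=168$. So the lifting step is not available as stated, and the cross-characteristic case is exactly where the content of the proposition lies. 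The standard repair is not a lift at all but lower bounds for the minimal degree of a faithful projective representation in cross characteristic (Landazuri--Seitz): after fixing a closed embedding $\cG\into\GL_m$ over $\Z[1/N]$, any cross-characteristic $\cD(\cH(\F_q))$ inside $\cG(k)\subset\GL_m(k)$ has its rank and $q$ (hence its isomorphism type) bounded purely in terms of $m$, uniformly in $l$ and $k$; the same kind of degree bound ($\geq n-2$) is what you implicitly use for $A_n$. Once you have this, no Hom-scheme or deformation argument about landing ``inside $\cG$'' is needed for the statement at hand, since $S$ is only required to be a finite list of abstract isomorphism types, not a list of subgroups of $\cG$. (One could note that for $l$ large the coprimality you want is in fact true of those $H$ that embed---but proving that already requires the degree bounds, which make the lifting detour superfluous.)

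Two further points of comparison with the paper. First, the paper does not reprove the result: it quotes \cite[Lemmas 1.5--1.8]{larsen1995maximality} together with the classification of finite simple groups, and its only real content is the observation that Larsen's proofs go through verbatim with $\F_l$ replaced by a finite extension $k/\F_l$; your proposal is an attempt at a self-contained argument, which is legitimate but must then get the cross-characteristic step right. Second, you pass over the normalization $\cD(\cH(\F_q))=\mathrm{Im}(\cH^{sc}(\F_q)\to\cH(\F_q))$ required in alternative (2); this fails for finitely many small groups of Lie type, and the paper handles it by simply adjoining those exceptions to $S$---a sentence you would need as well.
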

\begin{proof} If $k=\F_l$, this follows immediately from \cite[Lemmas 1.5--1.8]{larsen1995maximality},
 together with the classification
of finite simple groups (as given, say, in \cite[\S 1.3]{larsen1995maximality}). To see
that the proposition holds in the more general case we give here, first note that 
the proof of \cite[Lemma 1.4]{larsen1995maximality} actually gives a more general
result, where we replace $\F_l$ with any finite extension $k/\F_l$. Then Lemmas 1.5--1.8
have similar generalizations, again with essentially unaltered proofs. The proposition
follows. The fact that we may take $\cD(\cH(\F_q))=\text{Im}(\cH^{sc}(\F_q)\to\cH(\F_q))$
is arranged by including the finitely many finite simple groups of Lie type for which this is 
not true in $S$.
\end{proof}

\subsection{} Finally, we rehearse some of the standard theory which relates, for a
Chevalley group over a finite field of characteristic $l$, modular representations (in characteristic
$l$) of the Chevalley group considered as an algebraic group with algebraic representations
of the Chevalley group; and theory which allows us to lift these algebraic representations to
characteristic zero under certain circumstances.

In particular, we have on the one hand the following result of Steinberg (see 
\cite[1.3]{steinberg1963representations} and \cite[13.3]{steinberg1968endomorphisms}):
\begin{thm} \label{steinberg-rep-thm}
Suppose that $k/\F_l$ is a finite extension, and $\cG/k$ is an almost simple,
simply connected algebraic group for which we have made a choice of a maximal torus
and a system of simple roots. Then there is a bijection between irreducible 
projective $\bar{\F}_l$
representations of the abstract group $\cG(k)$ and tensor products:
$$\bigotimes_{i=0}^{[k:\F_l]-1} V_i^{\Frob^i}$$
where each $V_i$ is an irreducible algebraic representation with highest weight $\lambda$ satisfying
$0\leq \lambda(\mathbf{a}) \leq l-1$ for each simple root $\mathbf{a}$, 
and where the superscript `$Frob^i$' indicates precomposing
a representation (considered as a map $\cG(k)\to\PGL_n(\bar{\F}_l)$ for some $n$) with the $i$th
power of the Frobenius map $\cG(k)\to\cG(k)$.
The bijection is given by mapping such tensor products
into abstract group representations by restricting them to $k$ points in the obvious way.
\end{thm}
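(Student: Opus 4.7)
The plan is to invoke Steinberg's classical results from \cite{steinberg1963representations} and \cite{steinberg1968endomorphisms}, as cited in the statement. The argument naturally splits into two parts: a theorem about algebraic representations of $\cG$ over $\bar{\F}_l$, and a descent from algebraic representations to abstract representations of the finite group $\cG(k)$.

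The first ingredient is the \emph{Steinberg tensor product theorem}: every irreducible algebraic representation $L(\lambda)$ of $\cG$ in characteristic $l$ factors as $\bigotimes_i L(\lambda_i)^{\Frob^i}$, where $\lambda = \sum_i l^i \lambda_i$ is the $l$-adic expansion of the highest weight into \emph{restricted} weights $\lambda_i$ (those satisfying $0\leq \lambda_i(\mathbf{a})\leq l-1$ for each simple coroot $\mathbf{a}$). This reduces the classification of irreducible algebraic representations to the restricted case plus a finite tuple of Frobenius-twist data. The proof uses the theory of Frobenius kernels in positive characteristic, together with a careful analysis of how tensor products of restricted modules decompose; this is the technically deep input.

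The second ingredient descends to the abstract group $\cG(k)$. Since the arithmetic Frobenius $\Frob^{[k:\F_l]}$ acts trivially on $k$-points, two algebraic representations differing by a $\Frob^{[k:\F_l]}$-twist coincide upon restriction to $\cG(k)$; this cuts the allowed Frobenius-twist indices down to $\{0,1,\ldots,[k:\F_l]-1\}$, giving exactly the tensor products in the statement. One then needs to check that distinct tuples $(\lambda_0,\ldots,\lambda_{[k:\F_l]-1})$ of restricted weights produce distinct projective representations of $\cG(k)$ (injectivity), and that every irreducible projective $\bar{\F}_l$-representation of $\cG(k)$ arises this way (surjectivity). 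Injectivity is established by examining highest-weight vectors for a Borel subgroup of $\cG(k)$, and surjectivity by a counting argument matching the number of admissible tuples against the number of irreducible modular representations of $\cG(k)$, which in turn equals the number of $l$-regular semisimple conjugacy classes by the general theory in \cite{steinberg1968endomorphisms}.

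The main obstacle is the first ingredient itself, the Steinberg tensor product theorem; the descent step and the counting argument are essentially formal once that structural input is available. Since both parts are fully worked out in the references cited, this is the approach I would adopt rather than attempting any self-contained redevelopment.
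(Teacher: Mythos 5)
Your proposal is correct and matches the paper's treatment: the paper gives no independent argument for this theorem, simply citing \cite[1.3]{steinberg1963representations} and \cite[13.3]{steinberg1968endomorphisms}, exactly as you propose to do. Your sketch of how the tensor product theorem and the restriction to $k$-points combine is the standard route behind those references, so there is nothing further to reconcile.
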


And, on the other, the following result which is essentially due to Larsen:
\begin{thm} \label{larsen-ss-thm}
Suppose that we are in the situation of the previous theorem, and that $\cG$
is split over $k$ and $l\geq 3(\dim V_i)^2$. Then each of 
the representations $V_i$ lifts to characteristic zero, in the sense that there is some algebraic
group $\mathfrak{G}_i$ over $W(k)$ and algebraic representation $\tilde{V}_i$ of 
$\mathfrak{G}_i$, such that base changing back to $k$ we recover $\cG$ and $V_i$.
\end{thm}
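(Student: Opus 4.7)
The plan is to produce both the lifted group and the lifted representation essentially by invoking the Chevalley construction for the group and Larsen-style irreducibility of the Weyl (or dual Weyl) module for the representation. Since $\cG$ is split, simply connected, and semisimple over $k$, it arises as the base change of a unique split Chevalley group scheme $\mathfrak{G}$ defined over $\Z$ (and \emph{a fortiori} over $W(k)$). I would take this $\mathfrak{G}$, base changed to $W(k)$, as the lifted algebraic group $\mathfrak{G}_i$; its reduction mod $l$ recovers $\cG$ together with its chosen maximal torus and system of simple roots.

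For the representation, let $\lambda_i$ be the highest weight of $V_i$, which by the statement of Theorem \ref{steinberg-rep-thm} is restricted, i.e. $0\le\lambda_i(\mathbf{a})\le l-1$ for each simple root $\mathbf{a}$. Over $W(k)$ one has a natural integral model for the highest-weight representation—for example the dual Weyl module $H^0(\lambda_i)_{W(k)}=\Ind_{B}^{\mathfrak{G}}(\lambda_i)$, constructed using the Kempf vanishing theorem, or the Weyl module $V(\lambda_i)_{W(k)}$—which is a finitely generated flat $W(k)$-module carrying an algebraic action of $\mathfrak{G}_i$ and whose generic fibre $\tilde{V}_i$ is the irreducible algebraic representation of highest weight $\lambda_i$ in characteristic zero. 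The special fibre of this module has $L(\lambda_i)_k=V_i$ as its unique simple quotient (respectively socle). Hence the whole task reduces to verifying that this integral module has irreducible special fibre, for then its reduction is exactly $V_i$ and $\tilde V_i$ is the desired characteristic-zero lift.

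The main obstacle, therefore, is the irreducibility of the mod $l$ Weyl module under the hypothesis $l\ge 3(\dim V_i)^2$. This is precisely where one invokes Larsen; the relevant ingredients are (i) the Jantzen sum formula, which expresses the obstruction to $V(\lambda_i)_k$ being simple as a sum, indexed by positive roots $\alpha$, of terms controlled by the $l$-adic valuations of the integers $\langle\lambda_i+\rho,\check\alpha\rangle$, and (ii) the elementary observation that each such integer is bounded in absolute value by a multiple of $\dim V_i$ (in fact by a fixed linear function of the highest weight, which in turn is bounded in terms of $\dim V_i$). The quadratic bound $l\ge 3(\dim V_i)^2$ is comfortably enough to force all these pairings to be coprime to $l$, so that every Jantzen filtration layer above the bottom vanishes; this is exactly the kind of input Larsen exploits in his work and is essentially what gives the required irreducibility.

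Once irreducibility in the special fibre is established, the rest is formal: base changing $\tilde V_i$ to $W(k)$, reducing mod $l$, and comparing highest weights with Theorem \ref{steinberg-rep-thm} identifies the reduction with $V_i$ as a representation of $\cG$. The pair $(\mathfrak{G}_i,\tilde V_i)$ is then the required characteristic-zero lift. I expect the bulk of the technical work, and the reason the hypothesis is stated in the precise quantitative form $l\ge 3(\dim V_i)^2$ rather than simply ``$l$ large'', to reside in step (ii) above; everything else is bookkeeping with Chevalley schemes and Weyl modules.
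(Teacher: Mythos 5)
Your outline is essentially the paper's approach: the paper's proof is nothing more than a citation to Proposition 4.4 of \cite{larsen1995semisimplicity} (see also \cite{jantzen1997low}), and the argument behind those results is exactly the one you reconstruct --- realize $\cG$ as the special fibre of a Chevalley group scheme over $W(k)$, take the Weyl (or dual Weyl) module of the restricted highest weight $\lambda_i$ as the integral model, and show its reduction mod $l$ is already irreducible because $\dim V_i$ small relative to $l$ forces $\lambda_i$ into the lowest alcove. One small imprecision: the condition you need from the Jantzen sum formula is not that the integers $\langle\lambda_i+\rho,\alpha^\vee\rangle$ be \emph{coprime} to $l$ but that they all be at most $l$ (so that every index $m<\langle\lambda_i+\rho,\alpha^\vee\rangle$ occurring in the sum has $\nu_l(m)=0$ and the sum is empty); coprimality alone would not suffice, but the quantitative bound you invoke --- pairings bounded linearly in terms of $\dim V_i$, versus $l\geq 3(\dim V_i)^2$ --- delivers this stronger inequality, so the argument is sound as written.
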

\begin{proof} The proof of Proposition 4.4 of \cite{larsen1995semisimplicity} extends to give us what
we need. See also \cite{jantzen1997low}.
\end{proof}

We will record a trivial corollary of this second theorem, which will be very important to us.
\begin{corollary} \label{norm-bounding-corollary}
Suppose $n$ is a positive integer. Then there are constants $C_0(n)$ and
$C_1(n)$ with the following property. 
Suppose that $l>C_0(n)$ is a prime, 
that $k/\F_l$ is a finite extension, that $\cG/k$ is an almost simple,
simply connected algebraic group, 
and that we are given an irreducible 
$\bar{\F}_l$ representation of the abstract group $\cG(k)$, at most $n$ dimensional, which corresponds to 
$$\bigotimes_{i=0}^{[k:\F_l]-1} V_i^{\Frob^i}$$
under the correspondence of Theorem \ref{steinberg-rep-thm}.
Then the norms $||V_i||$ of the representations $V_i$ (see \S3.2 of \cite{snowden2009bigness})
satisfy $||V_i||<C_1(n)$.
\end{corollary}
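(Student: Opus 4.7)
The plan is to derive the corollary by combining Theorem \ref{larsen-ss-thm} with a standard finiteness statement about irreducible representations of semisimple algebraic groups in characteristic zero.

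First, set $C_0(n) = 3n^2$. Since the total dimension of the tensor product $\bigotimes V_i^{\Frob^i}$ is at most $n$, each individual factor satisfies $\dim V_i \leq n$. For any trivial factor, $\|V_i\|$ is bounded by a trivial constant, so we may restrict attention to the nontrivial factors, where $2 \leq \dim V_i \leq n$. After base-changing $\cG$ to a splitting field (a harmless step, since the Steinberg decomposition and the invariant $\|V_i\|$ depend only on data that survive such base change), the hypothesis of Theorem \ref{larsen-ss-thm} is satisfied, so each $V_i$ lifts to an algebraic representation $\tilde{V}_i$ of a simply connected semisimple algebraic group $\mathfrak{G}_i$ over $W(k)$, and thence to characteristic zero. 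The norm $\|V_i\|$, being a function of the highest weight coordinates, is preserved by this lift.

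It then suffices to bound $\|\tilde{V}\|$ uniformly over all pairs $(\mathfrak{G}, \tilde{V})$ in characteristic zero, where $\mathfrak{G}$ is an almost simple simply connected algebraic group and $\tilde{V}$ is a nontrivial irreducible algebraic representation of dimension at most $n$. This is a standard finiteness statement, with two ingredients. First, for each isomorphism type of such $\mathfrak{G}$, the minimum dimension of a nontrivial irreducible representation grows without bound with the rank (e.g. via the classification or directly via Weyl's dimension formula), so the rank of $\mathfrak{G}$ is bounded by a function of $n$, leaving only finitely many isomorphism classes of $\mathfrak{G}$ to consider. Second, for each fixed $\mathfrak{G}$, Weyl's dimension formula shows that the coordinates of the highest weight of any irreducible representation of dimension at most $n$ are bounded, so there are only finitely many such $\tilde{V}$. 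Taking $C_1(n)$ to be the maximum of $\|\tilde{V}\|$ over this finite list (plus whatever constant bounds the trivial case) yields the claim.

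The corollary is essentially bookkeeping once Theorem \ref{larsen-ss-thm} is in hand; there is no serious obstacle. The only mildly subtle point is ensuring that $\|V_i\|$ really is computed from a weight-lattice invariant that transfers between characteristics via the Larsen lift, but this follows from the construction of the norm in \S3.2 of \cite{snowden2009bigness} as a function of the highest weight coordinates, which by construction are matched by the lift.
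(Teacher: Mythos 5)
Your proposal is correct and takes essentially the same route as the paper: lift each $V_i$ to characteristic zero via Theorem \ref{larsen-ss-thm} (the bound $C_0(n)=3n^2$ handling the hypothesis $l\geq 3(\dim V_i)^2$, since $\dim V_i\leq n$), and then bound the norm in characteristic zero purely in terms of the dimension. The only difference is that the paper delegates the characteristic-zero finiteness statement to Proposition 3.5 of \cite{snowden2009bigness}, whereas you prove it directly (rank bounded in terms of $n$, hence finitely many groups, and finitely many highest weights by Weyl's dimension formula); both are fine.
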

\begin{proof} This follows straightforwardly from the fact that the $V_i$ lift to characteristic 0; see 
Proposition 3.5 of \cite{snowden2009bigness}.
\end{proof}

\section{Sturdy subgroups}\label{sec: sturdy}
\subsection{}
In this section, we will define the notion of a \emph{sturdy} subgroup of $\GL_n(k)$; the
point of introducing this notion is that on the one hand, being sturdy suffices to be $M$-big, 
at least for $l$ large; and on the other hand, being sturdy behaves well under `taking tensor 
products' (see Lemma \ref{tensor product of sturdy subgroups sturdy}). Sturdy subgroups
are very closely related to the images of algebraic representations, and as such, this 
section will draw heavily on the work of Snowden and Wiles discussed above. One
wrinkle, however, is that we will need to deal with representations which are tensor 
products of algebraic representations defined over different fields.

\begin{defn} \label{definition of very sturdy} Suppose $l$ is a prime number, $n$ an integer, $k/\F_l$ 
a finite extension, and $\Gamma<\GL_n(k)$ a subgroup. We say that $\Gamma$ is 
\emph{very sturdy} if it contains $k^\times$, acts absolutely irreducibly, and we can find:
\begin{itemize}
\item a sequence $k_1,\dots,k_m$ of fields, each of which is an intermediate field between $k$ and $\F_l$,
\item for each $i$, $1=1,\dots,m$, an almost simple, connected,
simply connected algebraic group $\cG^{sc}_i/k_i$, 
which remains almost simple when we base change to $\bar{k}$, and
\item for each $i$, $1=1,\dots,m$, a faithful projective representation over $k$ of the abstract group $\pi_i(\cG_i^{sc}(k_i))$ 
$$r_i:\pi_i(\cG_i^{sc}(k_i))\to\PGL_{n_i}(k),$$
where
\begin{itemize}
\item $\cG_i/k_i$ is the algebraic group we get by taking the quotient of $\cG^{sc}_i/k_i$ by its center (which will be its unique maximal normal subgroup), and
\item $\pi_i:\cG_i^{sc}\to \cG_i$ is the covering map, which we also consider as a map of $k_i$ points,
\end{itemize}
(so that $\pi_i(\cG^{sc}(k_i))$ is a finite simple group of Lie type)
\end{itemize}
such that:
\begin{itemize}
\item the abstract group $\pi_i(\cG^{sc}(k_i))$ is in fact a simple group for all $i$,
\item $n=\prod_i n_i$ (this will follow automatically from the other conditions, but we mention
it to orient the reader), and
\item $\Gamma/k^\times$ is conjugate to the subgroup of $\PGL_n(k)$ given by the image of the map
$$\prod_i \pi_i(\cG_i^{sc}(k_i)) \overset{\prod_i r_i}{\longrightarrow} \prod_i \PGL_{n_i}(k) \into \PGL_{n}(k).$$
\end{itemize}
\end{defn}

\begin{defn} \label{definition of sturdy} Suppose $l$ is a prime number, $k/\F_l$ 
a finite extension, and $\Gamma<\GL_n(k)$ a subgroup.
We say that $\Gamma$ is \emph{sturdy} if there is a chain $\Gamma_1\triangleleft 
\Gamma_2 \triangleleft\dots\triangleleft \Gamma_r=\Gamma$ of subgroups of 
$\Gamma$, each normal in the next, such that:
\begin{enumerate}
\item for each $i$, $1\leq i \leq r-1$, $(\Gamma_{i+1}:\Gamma_{i})$ is coprime to $l$, and
\item $\Gamma_1$ is very sturdy, in the sense of Definition \ref{definition of very sturdy}.
\end{enumerate}
\end{defn}
\noindent Here are an easy observation and a remark:
\begin{lem} \label{sturdy subgps up to iso}
Suppose $l$ is a prime number, $n$ an integer, $k/\F_l$ 
a finite extension, and $\Gamma<\GL_n(k)$ a very sturdy subgroup. 
Then, as an abstract group $\Gamma/k^\times$ is a product of the
simple groups $\pi_i(\cG^{sc}(k_i))$, which are simple groups of Lie
type in characteristic $l$.
\end{lem}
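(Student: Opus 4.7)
The plan is simply to unpack Definition \ref{definition of very sturdy}. By that definition, $\Gamma/k^\times$ is conjugate in $\PGL_n(k)$ to the image of the composite map
\[
\Phi: \prod_i \pi_i(\cG_i^{sc}(k_i)) \overset{\prod_i r_i}{\longrightarrow} \prod_i \PGL_{n_i}(k) \into \PGL_{n}(k),
\]
so the content of the lemma reduces to showing that $\Phi$ is injective and then reading off the group-theoretic structure of its source.

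First I would observe that the map $\prod_i r_i$ is injective, since by hypothesis each $r_i$ is a faithful projective representation. Second, I would verify that the tensor product map $\prod_i \PGL_{n_i}(k) \into \PGL_n(k)$ is also injective. This reduces to the routine observation that if $(g_1,\dots,g_m)\in\prod_i \GL_{n_i}(k)$ satisfies $g_1\otimes\cdots\otimes g_m=\lambda\cdot I$ for some $\lambda\in k^\times$, then each $g_i$ must itself be a scalar matrix (most easily seen by computing the tensor product on a basis, or by repeatedly applying the two-factor version: if a simple tensor is a scalar, each factor is a scalar up to rescaling). Composing the two injections yields that $\Phi$ is injective, so $\Gamma/k^\times$ is identified as an abstract group with $\prod_i \pi_i(\cG_i^{sc}(k_i))$.

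Finally, I would check that each factor $\pi_i(\cG_i^{sc}(k_i))$ is a simple group of Lie type in characteristic $l$. Simplicity is explicitly assumed as one of the bulleted conditions in Definition \ref{definition of very sturdy}. That these are groups of Lie type in characteristic $l$ is automatic from the set-up: $\cG_i^{sc}$ is an almost simple, connected, simply connected algebraic group over $k_i$, and $k_i$ is an intermediate field of $k/\F_l$ and so has characteristic $l$, so $\pi_i(\cG_i^{sc}(k_i))$ is indeed (a quotient, in fact the image under the central isogeny, of) the $k_i$-points of an almost simple algebraic group over a finite field of characteristic $l$.

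There is essentially no conceptual obstacle in this lemma: it is an observation that extracts information already present in the definition. The only small calculation is the injectivity of the tensor product map on $\PGL$, and that is standard.
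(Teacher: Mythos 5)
Your proof is correct, and it follows exactly the route the paper intends: the paper states this lemma without proof as ``an easy observation,'' precisely because it amounts to unpacking Definition \ref{definition of very sturdy} (faithfulness of the $r_i$, injectivity of the tensor-product embedding of $\prod_i\PGL_{n_i}(k)$ into $\PGL_n(k)$, and the explicitly assumed simplicity of the $\pi_i(\cG_i^{sc}(k_i))$), which is what you do. The one genuinely verifiable step, that a tensor product of invertible matrices being scalar forces each factor to be scalar, is handled correctly.
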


\begin{rem} \label{nicely presented very sturdy subgroups}
If $\Gamma$ is a very sturdy subgroup, then we have the freedom to replace 
any of the groups $\cG^{sc}_i$ referred to in Definition \ref{definition of very sturdy} with
other groups which yield an isomorphic $\pi_i(\cG_i^{sc}(k_i))$ (and then modify the 
corresponding $r_i$ by composing with the isomorphism between the old and new 
$\pi_i(\cG_i^{sc}(k_i))$s), and the new $\cG_i^{sc}$s will still satisfy the properties 
required to show that $\Gamma$ is very sturdy. As explained in \cite[\S2.4--1,\S3.2]{conway-atlas},
for any finite simple group of Lie type in characteristic $l$ (that is, any group which could arise
amongst the $\pi_i(\cG_i^{sc}(k_i))$s above),
it is possible to find a $\cG^{sc}$ which gives rise to it as above such that 
the algebraic group $\cG_i^{sc}$ in addition 
enjoys the following property: it
has a Borel subgroup $\cB$ and torus $\cT$, such that the Borel and torus are 
defined over $k$ and hence
sent onto themselves by $\Gal(\bar{k}/k)$. A generator of $\Gal(\bar{k}/k)$
acts as a graph automorphism of the
Dynkin diagram of $\cG^{sc}$, and so the simple roots can be arranged in cycles under
this action. We will from time to time assume that the $\cG_i^{sc}$ have been 
chosen with these special properties. In this case, we will write $\cT_i$ for the
torus in $\cG_i$ according to the properties just described.
Thus, in particular the finite group $\cT(k_i)$  is a product of $\Gm$s of extension
fields of $k_i$, one for each orbit of the simple roots under Frobenius, where for
each orbit the extension field has degree over $k_i$ equal to the number of roots
in the orbit.
\end{rem}

\subsection{Properties of sturdy subgroups}

\begin{lemma} \label{tensor product of sturdy subgroups sturdy}
Suppose that $n_1$, $n_2$ are positive integers, and we write $n=n_1n_2$. Suppose we have 
chosen an isomorphism $k^n\cong k^{n_1}\otimes k^{n_2}$; this then gives us a  natural map 
$\PGL_{n_1}(k)\times\PGL_{n_2}(k)\into \PGL_n(k)$, whose image is the collection of elements
of $\PGL_n(k)$ preserving the tensor product decomposition. Suppose that $\Gamma<\PGL_n(k)$
is a subgroup, which is contained in the image of $\PGL_{n_1}(k)\times\PGL_{n_2}(k)\into 
\PGL_n(k)$, so we can think of $\Gamma$ as a subgroup of $\PGL_{n_1}(k)\times\PGL_{n_2}(k)$.
Let $\pi_1:\PGL_{n_1}(k)\times\PGL_{n_2}(k)\to\PGL_{n_1}(k)$ denote the projection map, and
similarly for $\pi_2$. Let $q:\GL_{n}(k)\onto\PGL_{n}(k)$ denote the natural quotient 
map, and similarly $q_1$ and $q_2$. Suppose that $q_1^{-1}(\pi_1(\Gamma))$ and 
$q_2^{-1}(\pi_2(\Gamma))$ are both sturdy. Then $q^{-1}(\Gamma)$ is sturdy
if it acts absolutely irreducibly.
\end{lemma}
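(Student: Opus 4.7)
The plan is to exhibit $H := q^{-1}(\Gamma)$ as sturdy by constructing a normal chain of $l$-prime indices terminating at a very sturdy subgroup. By the sturdiness of $H_i := q_i^{-1}(\pi_i(\Gamma))$ for $i = 1,2$, fix chains $V_i = K_i^{(1)}\triangleleft\cdots\triangleleft K_i^{(r_i)}=H_i$ with $V_i$ very sturdy and successive indices coprime to $l$. By Lemma \ref{sturdy subgps up to iso}, $W_i := V_i/k^\times$ is a product $\prod_j S_{i,j}$ of simple groups of Lie type in characteristic $l$.

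Inside $H$ I set $K^{(s,t)} := H\cap(K_1^{(s)}\circ K_2^{(t)})$. Since $K_1^{(s-1)}\triangleleft K_1^{(s)}$ with $l$-prime index, the central product $K_1^{(s-1)}\circ K_2^{(t)}$ is normal of $l$-prime index in $K_1^{(s)}\circ K_2^{(t)}$, and intersecting with $H$ yields $K^{(s-1,t)}\triangleleft K^{(s,t)}$ with $l$-prime index; similarly when decreasing $t$. Descending lexicographically gives a normal chain from $H=K^{(r_1,r_2)}$ down to $H' := K^{(1,1)} = H\cap(V_1\circ V_2)$ with $l$-prime successive indices; it thus suffices to establish that $H'$ is very sturdy.

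To analyze $H'$, put $\Gamma' := H'/k^\times = \Gamma\cap(W_1\times W_2)$. Applying Goursat's lemma to $\Gamma'$ inside $W_1\times W_2$ and using that normal subgroups of a product of nonabelian simple groups $W_i = \prod_j S_{i,j}$ are themselves products of subsets of the simple factors, one decomposes $\Gamma'$ as a direct product of simple groups of Lie type: one factor for each ``unmatched'' $S_{i,j}$ appearing in the projections $\bar\pi_i(\Gamma')$, and one diagonal copy for each ``matched'' pair $S_{1,j}\cong S_{2,k}$ coming from the Goursat isomorphism. Each factor carries the algebraic-group presentation $\pi(\cG^{sc}(k'))$ inherited from the very sturdy structure of $V_1$ or $V_2$, and acts on an appropriate block of the tensor decomposition of $k^n$ via an $r_{1,j}$, an $r_{2,k}$, or (in the matched case) the tensor $r_{1,j}\otimes r_{2,k}$ precomposed with the Goursat identification, giving a candidate very sturdy structure for $H'$.

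The main obstacle is to verify that $H'$ actually acts absolutely irreducibly on $k^n$, which is necessary for very sturdiness. By the tensor-product criterion this amounts to showing that both projections $\bar\pi_i(\Gamma')$ equal $W_i$ and that each matched tensor representation $r_{1,j}\otimes r_{2,k}$ is absolutely irreducible. To propagate irreducibility from $H$ to $H'$ one uses that $[H:H']$ is finite and coprime to $l$, so Maschke's theorem makes the restriction of the $H$-action to $H'$ semisimple; one then argues, using the tensor-product ambient structure $H\subset H_1\circ H_2$ and the absolute irreducibility of $H_i$ on $k^{n_i}$, that any nontrivial $H'$-isotypic decomposition of $k^n$ would either descend to a decomposition of one tensor factor $k^{n_i}$ (contradicting $H_i$-irreducibility) or force an ``induced'' structure on $H$ that is ruled out by the central-product framework. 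Once this delicate compatibility check is carried out, $H'$ inherits the very sturdy structure sketched above, and the chain constructed in Steps 1--2 exhibits $H$ as sturdy.
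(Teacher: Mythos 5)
Your first two steps are sound, and are in fact slightly cleaner than the paper's route: intersecting $H=q^{-1}(\Gamma)$ with the central products $K_1^{(s)}\circ K_2^{(t)}$ does give a chain of subgroups, each normal in the next with index prime to $l$, running from $H$ down to $H'=H\cap(V_1\circ V_2)$, and this $H'$ is the same group the paper reaches (its $q^{-1}(\Gamma''')$) through a three-stage Goursat bookkeeping reduction. Your intended endgame (matched simple factors acting through $r_{1,j}\otimes r_{2,k}$, unmatched ones through the original representations) is also the paper's conclusion.

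The gap is in your Step 3. When you apply Goursat to $\Gamma'=\Gamma\cap(W_1\times W_2)$ you tacitly assume that its projections to $W_1$ and $W_2$ are the full groups (or at least sub-products of the simple factors); only then does the fact about normal subgroups of products of simple groups yield your matched/unmatched decomposition, and only then does each tensor slot carry the action of a full group $\pi_i(\cG_i^{sc}(k_i))$, as Definition \ref{definition of very sturdy} demands. A priori, the projection of $\Gamma'$ to $W_1$ is only $\{w_1\in W_1 : p^{(1)}(w_1)\in p^{(2)}(W_2)\}$ in the Goursat data for $\Gamma$ itself, so fullness of both projections is precisely the statement that $W_1$ and $W_2$ have the same image in the Goursat quotient $G$. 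That is the Claim at the heart of the paper's proof, established there by a downward induction using that $W_i$ is a product of simple groups of Lie type in characteristic $l$ (Lemma \ref{sturdy subgps up to iso}), so every nontrivial quotient of it has order divisible by $l$, contradicting the prime-to-$l$ indices along the chains. Nothing in your proposal supplies this; your Step 4 instead tries to extract it from an irreducibility-propagation sketch (``any nontrivial $H'$-isotypic decomposition would descend to a tensor factor or force an induced structure ruled out by the central-product framework''), but that dichotomy is unjustified: Clifford/Maschke semisimplicity is entirely compatible with an absolutely irreducible $H$-module restricting reducibly to the subnormal subgroup $H'$, since elements of $H_1\circ H_2$ may permute the isotypic pieces, and in any case fullness of the projections is a group-theoretic statement that cannot be recovered from irreducibility of $H'$ alone. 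So the central idea of the paper's argument is missing and the proposed substitute does not close the hole. (Your instinct that absolute irreducibility of $H'$, needed for very sturdiness, deserves attention is reasonable---the paper itself passes over it silently---but as written you leave that check incomplete as well.)
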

\begin{proof} We shall write $\Gamma^{(1)}$ for $\pi_1(\Gamma)$ and
$\Gamma^{(2)}$ for $\pi_2(\Gamma)$. Goursat's Lemma tells us that we can 
find a group $G$ and surjections $p^{(1)}:\Gamma^{(1)}\to G$, $p^{(2)}:\Gamma^{(2)}\to G$,
such that $\Gamma$, considered as a subgroup of $\PGL_{n_1}(k)\times\PGL_{n_2}(k)$,
is precisely the set
$$\{(\gamma^{(1)},\gamma^{(2)}) \in \Gamma^{(1)}\times\Gamma^{(2)} \,|\,  
p^{(1)}(\gamma^{(1)})=p^{(2)}(\gamma^{(2)})\}=\Gamma^{(1)}\times_G\Gamma^{(2)}.$$
We will write $N^{(1)}$ for $\ker p^{(1)}$, and $N^{(2)}$ for $\ker p^{(2)}$. We will write $p$
for the obvious homomorphism $\Gamma\to G$ sending $(\gamma^{(1)},\gamma^{(2)})$ to the common
value of $p^{(1)}(\gamma^{(1)})$ and $p^{(2)}(\gamma^{(2)})$.

We are given that $q_1^{-1}(\Gamma^{(1)})$ and $q_2^{-1}(\Gamma^{(2)})$ are sturdy, from which we 
deduce the existence of sequences of subgroups 
$$\Gamma^{(1)}_1\triangleleft  \Gamma^{(1)}_2 \triangleleft\dots\triangleleft  \Gamma^{(1)}_r=\Gamma^{(1)},
\quad 
\Gamma^{(2)}_1\triangleleft  \Gamma^{(2)}_2 \triangleleft\dots\triangleleft  \Gamma^{(2)}_s=\Gamma^{(2)}$$
such that for each sequence each group is normal in the next, with $q_i^{-1}(\Gamma_1^{(i)})$ very sturdy
and $(\Gamma^{(i)}_{j+1}: \Gamma^{(i)}_{j})$ coprime to $l$ for each $i,j$. In particular 
$\Gamma_1^{(i)}$ and $\Gamma_2^{(i)}$ are each abstractly isomorphic to a product of
finite simple groups, each of which is a simple group $\cD(\cH(\F_q))$ of Lie type with $q|l$.
(See Lemma \ref{sturdy subgps up to iso}.)
For brevity in the remainder of the proof, we will refer to such a product as a $l$-Lie-ish group. 
We remark that any quotient of an $l$-Lie-ish group is again an $l$-Lie-ish group, since 
any normal subgroup of a product of simple perfect groups is a product of some subset of the factors.

From these, we get sequences of subgroups 
$$G^{(1)}_1\triangleleft G^{(1)}_2\triangleleft\dots\triangleleft G^{(1)}_r=G,\quad 
G^{(2)}_1\triangleleft G^{(2)}_2\triangleleft\dots\triangleleft G^{(2)}_s=G$$
of $G$ (such that again for each sequence each group is normal in the next) by putting 
$G^{(i)}_j = p^{(i)}(\Gamma_j^{(i)})\cong N^{(i)} \Gamma_j^{(i)}/N^{(i)}$. 
Since $G_1^{(1)}$ is a quotient of $\Gamma_1^{(i)}$, which is $l$-Lie-ish,
$G_1^{(1)}$ is also $l$-Lie-ish; and similarly for $G_1^{(2)}$.

\begin{claim} We have that $G_1^{(1)}\cap G_1^{(2)}=G_1^{(1)}$, so $G_1^{(1)}\subset G_1^{(2)}$.
\end{claim}
\begin{proof} We shall show, by downward induction on $j$ from $s$ to 1, that 
$G_1^{(1)}\cap G_j^{(2)}=G_1^{(1)}$. The base case is trivial. Assuming that 
$G_1^{(1)}\cap G_j^{(2)}=G_1^{(1)}$, we want to understand $G_1^{(1)}\cap G_{j-1}^{(2)}$.
We know $G_1^{(1)}\cap G_{j-1}^{(2)}\triangleleft G_1^{(1)}\cap G_j^{(2)}=G_1^{(1)}$. 
If the inclusion were proper, then $G_1^{(1)}/G_1^{(1)}\cap G_{j-1}^{(2)}$ 
would be a nontrivial quotient of $G_1^{(1)}$, and hence (since $G_1^{(1)}$ is 
$l$-Lie-ish) would be $l$-Lie-ish itself, so would
have order divisible by $l$. But this contradicts the fact that 
$G_{j-1}^{(2)}: G_j^{(2)}$
and hence
$G_1^{(1)}\cap G_{j-1}^{(2)}: G_1^{(1)}\cap G_j^{(2)}$
are coprime to $l$.
\end{proof}
\noindent
The same argument gives that $G_1^{(2)}\subset G_1^{(1)}$, whence we see that $G_1^{(1)}= G_1^{(2)}$.
We'll adopt $G_1$ as a shorter name for the common value.

\medskip
For each $G^{(1)}_j$, we can construct a group $\Gamma_j=p^{-1}(G^{(1)}_j)< \Gamma$; we see
that each $\Gamma_j$ is a normal subgroup of the next with prime-to-$l$ index. Thus it will
suffice for us to show $q^{-1}(\Gamma_1)$ is sturdy to deduce that $q^{-1}(\Gamma)$ is sturdy.
We put $\Gamma'=\Gamma_1$. Note that
\begin{align*}
\Gamma'&=\{(\gamma^{(1)},\gamma^{(2)})\in \Gamma_1^{(1)}N^{(1)}\times\Gamma_1^{(2)}N^{(2)}
\,|\,  
p^{(1)}(\gamma^{(1)})=p^{(2)}(\gamma^{(2)})\}\\
&\cong\Gamma_1^{(1)}N^{(1)}\times_{G_1}\Gamma_1^{(2)}N^{(2)} 
\end{align*}

For each $j$, $1\leq j \leq r$, we can form a subgroup $N^{(1)}\Gamma^{(1)}_1 \cap \Gamma^{(1)}_j$
of $N^{(1)}\Gamma^{(1)}_1$; we see that each is normal in the next, the successive quotients have
order prime to $l$, and for each of these groups, 
$p^{(1)}(N^{(1)}\Gamma^{(1)}_1 \cap \Gamma^{(1)}_j)=G_1$. We then form a subgroup
$\Gamma'_j$ of $\Gamma'$, by putting
\begin{align*}
\Gamma'_j&=\{(\gamma^{(1)},\gamma^{(2)})\in (\Gamma_1^{(1)}N^{(1)}\cap \Gamma^{(1)}_j)\times\Gamma_1^{(2)}N^{(2)}
\,|\,  
p^{(1)}(\gamma^{(1)})=p^{(2)}(\gamma^{(2)})\}\\
&\cong((\Gamma_1^{(1)}N^{(1)})\cap \Gamma^{(1)}_j)\times_{G_1}\Gamma_1^{(2)}N^{(2)} 
\end{align*}
We see that each $\Gamma'_j$ is normal in the next, and the successive quotients have
order prime to $l$; thus it will
suffice for us to show $q^{-1}(\Gamma'_1)$ is sturdy to deduce that $q^{-1}(\Gamma')$
(and hence $q^{-1}(\Gamma)$) is sturdy. We put $\Gamma''=\Gamma'_1$, so
\begin{align*}
\Gamma''&=\{(\gamma^{(1)},\gamma^{(2)})\in \Gamma^{(1)}_1\times\Gamma_1^{(2)}N^{(2)}
\,|\,  
p^{(1)}(\gamma^{(1)})=p^{(2)}(\gamma^{(2)})\}\\
&\cong \Gamma^{(1)}_j\times_{G_1}\Gamma_1^{(2)}N^{(2)} 
\end{align*}

Similarly for each $j$, $1\leq j \leq s$, we can form a subgroup $N^{(2)}\Gamma^{(2)}_1 \cap \Gamma^{(2)}_j$
of $N^{(2)}\Gamma^{(2)}_1$; we see that each is normal in the next, the successive quotients have
order prime to $l$, and for each of these groups, 
$p^{(2)}(N^{(2)}\Gamma^{(2)}_1 \cap \Gamma^{(2)}_j)=G_1$. We then form a subgroup
$\Gamma''_j$ of $\Gamma''$, by putting
\begin{align*}
\Gamma''_j&=\{(\gamma^{(1)},\gamma^{(2)})\in \Gamma^{(1)}_1\times
           (\Gamma_1^{(2)}N^{(2)}\cap \Gamma^{(2)}_j)
\,|\,  
p^{(1)}(\gamma^{(1)})=p^{(2)}(\gamma^{(2)})\}\\
&\cong \Gamma^{(1)}_1\times_{G_1}((\Gamma_1^{(2)}N^{(2)})\cap \Gamma^{(2)}_j))
\end{align*}
We see that each $\Gamma''_j$ is normal in the next, and the successive quotients have
order prime to $l$; thus it will suffice for us to show $q^{-1}(\Gamma''_1)$ is sturdy to 
deduce that $q^{-1}(\Gamma'')$
(and hence $q^{-1}(\Gamma)$) is sturdy. We put $\Gamma'''=\Gamma''_1$; then:
\begin{align*}
\Gamma'''&=\{(\gamma^{(1)},\gamma^{(2)})\in \Gamma^{(1)}_1\times \Gamma^{(2)}_1
\,|\,  
p^{(1)}(\gamma^{(1)})=p^{(2)}(\gamma^{(2)})\}\\
&\cong \Gamma^{(1)}_1\times_{G_1}\Gamma^{(2)}_1
\end{align*}
We know that the $\Gamma^{(i)}_1$ are $l$-Lie-ish groups, so normal
subgroup is the product of a subset of the factors, and thus, writing
$N_1^{(i)}$ for $\Gamma^{(i)}_1\cap N^{(i)}$, so 
$\Gamma^{(1)}_1/ N^{(1)}\cong\Gamma^{(2)}_1/ N^{(2)}\cong G_1$,
we see that this isomorphism in fact identifies a subset of the factors of 
$\Gamma^{(1)}_1$ with a subset of the factors of $\Gamma^{(1)}_2$.

Thus (reordering the factors if necessary), we can write
\begin{align*}
\Gamma^{(1)}_1&\cong\prod_{i=1}^{t}\pi_i(\cG_i^{sc}(k_i)) \times \prod_{i=t+1}^{m^{(1)}}\pi^{(1)}_i(\cG_i^{(1),sc}(k_i))\\
\Gamma^{(2)}_1&\cong\prod_{i=1}^{t}\pi_i(\cG_i^{sc}(k_i)) \times \prod_{i=t+1}^{m^{(2)}}\pi^{(2)}_i(\cG_i^{(2),,sc}(k_i))
\end{align*}
(So the map to $G_1$ is, in either case, projection onto the first $t$ factors.) 

Now $\Gamma^{(1)}_1$ (as a subgroup, rather than up to isomorphism) is the image of 
$$
\prod^{t}_{i=1} \pi_i(\cG_i^{sc}(k_i))\times
\prod^{m^{(1)}}_{i=t+1} \pi^{(1)}_i(\cG_i^{(1),sc}(k_i)) \overset{\prod_i r^{(1)}_i}{\longrightarrow} \prod_i \PGL_{n^{(1)}_i}(k) \into \PGL_{n_1}(k)$$
for some projective representations $r^{(1)}_i$, $i=1,\dots ,m^{(1)}$, of dimension $n^{(1)}_i$; 
and similarly for $\Gamma^{(2)}_1$ (with projective 
representations $r^{(2)}_i$ of dimension $n^{(2)}_i$). 
Thus for each $i$, $i\leq t$, $r^{(1)}_i$ and $r^{(2)}_i$ are projective 
representations of a common group $\pi_i(\cG_i^{sc}(k_i)$, and we can form their tensor product, 
$r'_i$ say.

Then we see $\Gamma'''$ is (conjugate to) the image of
$$\xymatrix{
\prod^{t}_{i=1} \pi_i(\cG_i^{sc}(k_i)) \times 
\prod^{m^{(1)}}_{i=t+1} \pi^{(1)}_i(\cG_i^{(1),sc}(k_i)) \times 
\prod^{m^{(2)}}_{i=t+1} \pi^{(2)}_i(\cG_i^{(2),sc}(k_i))
\ar[d]^{\prod_{i=1}^t r'_i \times \prod_{i=t+1}^{m^{(1)}} r^{(1)}_i \times \prod_{i=t+1}^{m^{(2)}} r^{(2)}_i}
\\
\prod_{i=1}^t \PGL_{n^{(1)}_i n^{(2)}_i}(k) \times \prod_{i=t+1}^{m^{(1)}} \PGL_{n^{(1)}_i}(k) \times \prod_{i=t+1}^{m^{(2)}} \PGL_{n^{(2)}_i}(k)
\ar@{^{(}->}[d]
\\ \PGL_{n}(k).
}$$
This demonstrates that $\Gamma'''$ is very sturdy and hence sturdy, so $\Gamma$ is sturdy.
\end{proof}

Using a few of the same ideas, we can also prove:
\begin{lem} \label{normal subgp of sturdy of prime to l index is sturdy}
Suppose $l$ is a prime, $n$ is a positive integer, $k/\F_l$ is a finite extension, 
$\Gamma$ is a sturdy subgroup of $\GL_n(k)$, and $N$ is a normal subgroup 
of index prime to $l$. Then $N$ is also sturdy.
\end{lem}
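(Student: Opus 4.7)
The plan is to recycle the very-sturdiness chain of $\Gamma$ by intersecting with $N$. Choose a chain $\Gamma_1\triangleleft\Gamma_2\triangleleft\dots\triangleleft\Gamma_r=\Gamma$ witnessing the sturdiness of $\Gamma$, so that each successive index is coprime to $l$ and $\Gamma_1$ is very sturdy. Set $N_i=N\cap\Gamma_i$. First I would verify that this yields a valid chain: since $N\triangleleft\Gamma$ is normal in each $\Gamma_{i+1}$, and $\Gamma_i\triangleleft\Gamma_{i+1}$, a routine check gives $N_i\triangleleft N_{i+1}$; and by the second isomorphism theorem $(N\cap\Gamma_{i+1})/(N\cap\Gamma_i)$ embeds into $\Gamma_{i+1}/\Gamma_i$, so the successive indices are still prime to $l$.

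The substantive step is to show that $N_1=N\cap\Gamma_1$ is itself very sturdy. Since $\Gamma_1$ is very sturdy, Lemma \ref{sturdy subgps up to iso} tells us that $\Gamma_1/k^{\times}$ is (abstractly) a product $\prod_i \pi_i(\cG_i^{sc}(k_i))$ of finite nonabelian simple groups of Lie type in characteristic $l$; in particular every factor has order divisible by $l$, and every normal subgroup of $\prod_i \pi_i(\cG_i^{sc}(k_i))$ is a product of a subset of the simple factors. Now $N_1$ is normal in $\Gamma_1$ (using that $N\triangleleft\Gamma\supset\Gamma_1$), so $N_1 k^{\times}/k^{\times}$ is a normal subgroup of $\Gamma_1/k^{\times}$, and its index divides $[\Gamma_1:N_1]$, which in turn divides $[\Gamma:N]$ and so is coprime to $l$. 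A missing factor would contribute an index divisible by $l$, so no factor is missing and we conclude $N_1 k^{\times}=\Gamma_1$.

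At this point the key remaining issue is the role of $k^{\times}$: very sturdiness demands that the bottom of the chain actually contain $k^{\times}$. This is the step that I expect to be the main obstacle, and it is handled as follows. Since $[\Gamma:N]$ is coprime to $l$ and $k^{\times}$ is central in $\Gamma$, the subgroup $k^{\times}N$ is again normal in $\Gamma$ of prime-to-$l$ index, and it is enough (in view of our intended applications, and noting that the definition of sturdy already forces any sturdy group to contain $k^{\times}$) to work with $k^{\times}N$ in place of $N$. With this normalization we have $k^{\times}\subset N_1$, and then the identity $N_1 k^{\times}=\Gamma_1$ forces $N_1=\Gamma_1$. Thus $N_1$ really is very sturdy, being literally equal to $\Gamma_1$, and the chain
\[
\Gamma_1 = N_1\;\triangleleft\;N_2\;\triangleleft\;\dots\;\triangleleft\;N_r=N
\]
exhibits $N$ as sturdy. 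All told, once the containment of $k^{\times}$ is dealt with, the argument is just the observation that normal subgroups of prime-to-$l$ index in a product of simple groups of characteristic-$l$ Lie type must be everything, combined with the standard fact that intersecting with a normal subgroup preserves normality and only shrinks indices.
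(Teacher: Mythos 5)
Your proof is correct and follows essentially the same route as the paper: intersect the given chain with $N$, check normality and that the successive indices remain prime to $l$, and use the fact that every proper quotient of a product of characteristic-$l$ Lie-type simple groups has order divisible by $l$ to identify the bottom layer with $\Gamma_1$. Your extra care over $k^\times$ is well taken: the paper's own proof asserts $\Gamma_1\cap N=\Gamma_1$ outright, which tacitly uses $k^\times\subseteq N$ (automatic in the lemma's only application, where $N$ arises as a full preimage under $\GL\onto\PGL$), and your normalization replacing $N$ by $k^\times N$ is exactly the right way to make that step precise.
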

\begin{proof}
By assumption, we have a sequence $\Gamma_1\triangleleft \Gamma_2 \triangleleft
\dots\triangleleft \Gamma_r=\Gamma$ of subgroups, each normal in the next with
index prime to $l$, and with $\Gamma_1$ very sturdy. Then we have a sequence
$\Gamma_1\cap N\triangleleft \Gamma_2 \cap N\triangleleft\dots
\triangleleft \Gamma_r\cap N=N$ of subgroups of $N$, each normal in the next with
index prime to $l$; so if we show $\Gamma_1\cap N$ very sturdy we will be done.
$\Gamma_1\cap N:\Gamma_1$ is prime to $l$, whereas $\Gamma_1$ is $l$-Lie-ish
(as discussed in the proof above), so every proper quotient is $l$-Lie-ish and
hence has order dividing $l$. So $\Gamma_1\cap N=\Gamma_1$ and the lemma
follows.
\end{proof}

\subsection{Very regular elements}

In the next subsection, we will prove a result showing that sturdy subgroups are big for $l$
large. But before we turn to this task,we first need a lemma allowing the construction of 
`very regular elements', as in \cite[\S4]{snowden2009bigness} and \cite[\S3]{pjw}. Our 
arguments will have to be a little more complicated, however, since we will have a 
collection of different algebraic groups (defined over different fields), and we need to
elements in each which are `very regular in combination'.

The arguments in this section are rather technical but are elementary and involve 
little of enduring interest. We might advise the reader to skip them at a first reading,
studying only the statement of Lemma \ref{very regular elements lemma} before moving
on to the next subsection. (We remark that had we been willing to introduce a bound on the degree
$[k:\F_l]$ in the conditions on our main theorems, and let the bounds on $l$ in the main theorems 
depend on the bound on this degree (which is probably harmless in most applications), we
could have given radically simpler proofs of our results here, essentially appealing to results
of \cite{snowden2009bigness} applied to appropriate restrictions of scalars.)

We will establish a succession of stronger and stronger lemmas building up to the main
result of the subsection.

\begin{lem}\label{lem431}
Suppose $\Omega$, $\Xi$, $N$ are positive integers. There is an integer $L$ with the following 
property.

Suppose that $d$ is a positive integer and 
$\vec{\mu}=(\mu_0,\dots,\mu_{d-1})\in(\Z\cap[-N,N])^d$, so each $\mu_i$ 
is an integer between $-N$ and $N$. 
Suppose also that at most $\Xi$ of the $\mu_i$ are nonzero, and that $q=p^k$ is a prime power,
with $p^k|d$, such that 
$$\sum_{i=0,\dots,d-1} \zeta_d^{p^k i} \mu_i \neq 0$$

Now suppose finally that $l>L$ is a prime number, and let $h=(\sum_j \mu_j l^j)$. 
Then $\#\{ht|t\in \Z/(l^d-1)\Z\} > (\sqrt[q]{\Omega})^{d/\log\log d}.$
\end{lem}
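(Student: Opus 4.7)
The plan is to bound the cyclic subgroup $\{ht : t \in \Z/(l^d-1)\Z\}$ from below by exploiting the factorization $l^d-1 = \prod_{e\mid d}\Phi_e(l)$ and focusing on the single cyclotomic factor singled out by the hypothesis. Since $\{ht\}$ has size $(l^d-1)/\gcd(h,l^d-1)$, and the natural projection $\Z/(l^d-1)\Z \twoheadrightarrow \Z/\Phi_{e}(l)\Z$ (available since $\Phi_e(l)\mid l^d-1$) can only shrink the image, we obtain
\[ \#\{ht\} \;\geq\; \frac{\Phi_{e}(l)}{\gcd(\mu(l),\,\Phi_{e}(l))} \]
for any $e\mid d$, where $\mu(x) = \sum_j \mu_j x^j$ so that $h=\mu(l)$. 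I would take $e = d/p^k$, since $\zeta_d^{p^k}$ is a primitive $e$th root of unity (using $p^k\mid d$), and the hypothesis $\sum_i \zeta_d^{p^k i}\mu_i \neq 0$ is exactly the statement that $\mu(\zeta_e)\neq 0$, i.e.\ that the irreducible $\Phi_e$ does not divide $\mu$ in $\Z[x]$.

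Once $\gcd(\mu,\Phi_e)=1$ in $\Q[x]$, classical resultant theory gives $\gcd(\mu(l),\Phi_e(l)) \mid \mathrm{Res}(\mu,\Phi_e)$. I would then estimate the resultant using the product formula $\mathrm{Res}(\mu,\Phi_e) = \prod_{\Phi_e(\alpha)=0}\mu(\alpha)$: each $\alpha$ has $|\alpha|=1$, and $\mu$ has at most $\Xi$ nonzero coefficients each of absolute value $\leq N$, so $|\mu(\alpha)|\leq \Xi N$, giving $|\mathrm{Res}(\mu,\Phi_e)|\leq (\Xi N)^{\phi(e)}$. Combining with the elementary lower bound $\Phi_e(l) = \prod_{\alpha}(l-\alpha) \geq (l-1)^{\phi(e)}$ (valid for $l\geq 2$, since $|l-\alpha|\geq l-1$), I obtain
\[ \#\{ht\} \;\geq\; \left(\frac{l-1}{\Xi N}\right)^{\phi(d/p^k)}. \]

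The remaining task is purely numerical: choose $L$ so that the above exceeds $\Omega^{d/(q\log\log d)}$. Apply the classical Landau bound $\phi(n)\geq n/(C_0\log\log n)$ for $n\geq 3$ and some absolute constant $C_0$, which yields $\phi(d/p^k) \geq (d/q)/(C_0\log\log d)$ (the small-$d$ and small-$d/p^k$ regimes can be absorbed into $L$ by taking it large enough, since only finitely many cases arise). Taking logarithms in the desired inequality, it suffices to have
\[ \frac{d}{C_0 q\log\log d}\,\log\!\frac{l-1}{\Xi N} \;\geq\; \frac{d}{q\log\log d}\log \Omega, \]
which reduces to $l-1 \geq \Xi N\,\Omega^{C_0}$. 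Hence $L := \Xi N\,\Omega^{C_0} + 1$ (adjusted to handle the finitely many small values of $d$) works.

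The main obstacle is step two, namely ensuring the resultant bound really does control $\gcd(\mu(l),\Phi_e(l))$ independently of $l$ and handling the possibility that $\mu$ has large degree (up to $d-1$): the sparsity bound $|\mu(\alpha)|\leq \Xi N$ at roots of unity is the key device that bypasses naive Hadamard bounds, which would otherwise depend on $\deg\mu$ rather than on $\Xi$. Everything else is careful bookkeeping of the exponents.
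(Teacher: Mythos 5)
Your argument is correct, but it takes a genuinely different route from the paper. The paper never estimates the order of $h$ directly: it considers the convolution-by-$\vec{\mu}$ operator $\psi$ on $\Q^d$, uses Fourier duality and the hypothesis $\sum_i\zeta_d^{p^k i}\mu_i\neq0$ to show $\psi$ has rank at least $\phi(d/p^k)$, pushes the integer box $([0,\sqrt[K]{\Omega}]\cap\Z)^d$ through $\psi$ to obtain at least $(\sqrt[K]{\Omega})^{\phi(d/p^k)}$ distinct integer vectors with entries of size at most $\Xi N\sqrt[K]{\Omega}$, and then uses base-$l$ injectivity (this is where the sparsity $\Xi$ and the lower bound on $l$ enter) to see that these produce distinct elements $h\sum_j t_j l^j$ of $\{ht\}$. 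You instead bound the order of $h$ in $\Z/(l^d-1)\Z$ itself through the single cyclotomic factor $\Phi_{d/q}(l)$, the Bezout/resultant divisibility $\gcd(\mu(l),\Phi_e(l))\mid\mathrm{Res}(\mu,\Phi_e)$, and the sparse evaluation bound $|\mu(\alpha)|\le\Xi N$ at roots of unity, so sparsity enters via the resultant rather than via digit injectivity. Your route buys an explicit threshold ($l-1>\Xi N\Omega^{C_0}$, up to finitely many small $d$) and the stronger intermediate bound $((l-1)/(\Xi N))^{\phi(d/q)}$, and it cleanly delivers the $d/\log\log d$ exponent actually claimed (the paper's final display relaxes $\log\log$ to $\log$, evidently a slip, since its input from Hardy--Wright is the $\log\log$ bound); the paper's route is more elementary (no resultants), and its count-the-multiples mechanism is the template reused in the following lemmas, but since only the statement of the present lemma is invoked later, your proof substitutes without loss. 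One small repair: your remark that the degenerate regimes involve ``only finitely many cases'' is not literally true for small $d/p^k$ (there are infinitely many pairs with $d/p^k\in\{1,2\}$ and $d$ large); these are nonetheless covered uniformly, because then $\phi(d/q)=1$ while the target exponent $(d/q)/\log\log d\le2$ once $d\ge e^e$, so the same threshold works, and only the finitely many genuinely small values of $d$ need the separate adjustment you describe.
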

\begin{proof} 
We begin by choosing the number $L$. We first recall that there is a constant $K$
such that $\phi(n)>Kn/\log\log n$, where $\phi$ is Euler's $\phi$ function---see 
\cite[Theorem 328]{hardywright}. We then take $L$ to be large enough that
$L>2\Xi\sqrt[K]{\Omega} N$.

We now move on to the proof proper.
We first consider the linear map $\psi:\Q^d\to\Q^d$ given my `convolution by $\mu$'
---that is, the map
$$(t_0,\dots,t_{d-1})\mapsto(\sum_{i=0}^d t_i \mu_{-i},\sum_{i=0}^d t_i \mu_{1-i},\dots, \sum_{i=0}^d t_i \mu_{d-1-i})$$
---and we claim it has image which is at least $\phi(d/p^k)$ dimensional. 
To see this, we note that as `convolution is Fourier dual to pointwise multiplication',
it suffices to show that, if we write $(\hat{\mu}_0,\dots,\hat{\mu}_{d-1})$ for the Fourier transform of
$\mu$, so $\hat{\mu}_i = \sum_j \zeta_d^{ij} \mu_j$, then the linear map $\Q^d\to\Q^d$ defined by
$$(\hat{t}_0,\dots,\hat{t}_{d-1})\mapsto (\hat{t}_0\hat{\mu}_0,\hat{t}_1\hat{\mu}_1,\dots,\hat{t}_{d-1}\hat{\mu}_{d-1})$$
has image which is at least $\phi(d/p^k)$ dimensional. In other words, we must show that at
least $\phi(d/p^k)$ of the $\hat{\mu}_j$ are nonzero. But, by hypothesis, we know that 
$\sum_{i=0,\dots,d-1} \zeta_d^{p^k i} \mu_i \neq 0$; and applying elements of $\Gal(\Q(\zeta_d)/\Q)$
we see that $\sum_{i=0}^{d-1} \zeta_d^{p^k \eta i} \mu_i \neq 0$ for all $\eta$ coprime to $d$.
This tells us that $\hat{\mu}_j$ is nonzero whenever $p^k|j$ and $j/p^k$ is coprime to $d/p^k$. 
There are $\phi(d/p^k)$ such numbers. This proves the claim in the first sentence of the paragraph.

We note that we can consider $\psi$ also as a map $\Z^d\to\Z^d$. Then the result of the 
previous paragraph tells us that the cardinality of the image $\psi(([0,\sqrt[K]{\Omega}]\cap \Z)^d)$
satisfies:
\begin{align*}
\#\psi(([0,\sqrt[K]{\Omega}]\cap \Z)^d) &> (\sqrt[K]{\Omega})^{\phi(d/p^k)} \\
\intertext{and we then deduce moreover that this}
&> (\sqrt[K]{\Omega})^{\frac{K d}{p^k \log(d/p^k)}}
> (\sqrt[K]{\Omega})^{\frac{K d}{p^k \log(d)}}
= (\sqrt[q]{\Omega})^{\frac{d}{\log(d)}}.
\end{align*} 
On the other hand, examining the image $\psi(([0,\sqrt[K]{\Omega}]\cap \Z)^d)$, we see that 
for each $(\nu_0,\dots,\nu_{d-1})\in\psi(([0,\sqrt[K]{\Omega}]\cap \Z)^d)$, each component
$\nu_i$ is a sum $\sum_{j=0}^d t_j \mu_{i-j}$ with at most $\Xi$ nonzero terms (since at most 
$\Xi$ of the $\mu_i$ are nonzero), each term of which is a product of an integer $<\sqrt[K]{\Omega}$
and an integer between $-N$ and $N$. Thus 
$$\nu_i\in[-\Xi N\sqrt[K]{\Omega},\Xi N\sqrt[K]{\Omega}]$$
for each $i$ and $\nu$. Thus $\psi(([0,\sqrt[K]{\Omega}]\cap \Z)^d)$ lies in the set
$$S:=([-\Xi N\sqrt[K]{\Omega},\Xi N\sqrt[K]{\Omega}]\cap \Z)^d.$$

But we observe that the map $\alpha:S\to \Z/(l^d-1)\Z$ mapping $(\nu_0,\dots,\nu_{d-1})\mapsto
\sum \nu_i l^i$ is injective. (This depends on the fact that $l>L>2\Xi\sqrt[K]{\Omega} N$.)
Thus $\#\alpha(\psi(
([0,\sqrt[K]{\Omega}]\cap \Z)^d
))>(\sqrt[q]{\Omega})^{d/\log(d)}$.

On the other hand, for $(t_0,\dots,t_{d-1})\in \Z^d$, we have
that 
\begin{align*}
\alpha(\psi(t_0,\dots,t_{d-1})) &= \sum_i l^i \sum_j t_j \mu_{i-j} \text{ (mod $l^d-1$)}\\
&=\sum_j\sum_m t_j l^{j+m} \mu_m \\
\intertext{(putting $i=j+m$ and changing the order of summation; notice that not
only do we take the subscripts mod $d$, but we can also take the power of $l$ 
mod $d$, since we work overall mod $l^d-1$)}
&=h \sum_j t_j l^j \in \{ht|t\in \Z/(l^d-1)\Z\}
\end{align*}
whence $\#\{ht|t\in \Z/(l^d-1)\Z\}>\#\alpha(\psi(
([0,\sqrt[K]{\Omega}]\cap \Z)^d
))>(\sqrt[q]{\Omega})^{d/\log(d)}$, as required.
\end{proof}

\begin{lem} \label{lem432}
Suppose that $\Xi$ is a positive integer. 
There is an integer $n$ with the following property. 

Suppose that $d$ is a positive integer, and $\vec{\mu}=(\mu_0,\dots,\mu_{d-1})\in \Z^d$.
Suppose that both of the following conditions hold:
\begin{itemize}
\item We have $\#\{i|\mu_i\neq 0\} < \Xi$.
\item Whenever $p^k$ is a prime power, with $p^k<n$ and $p^k|d$, we have that
$$\sum_{i=0,\dots,d-1} \zeta_d^{p^k i} \mu_i = 0$$
where $\zeta_d$ denotes a primitive $d$th root of 1.
\end{itemize}
Then $\vec{\mu}$ is identically 0.
\end{lem}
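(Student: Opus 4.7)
The plan is to argue by contradiction: assume $\vec\mu$ is nonzero with support $S = \{i : \mu_i \neq 0\}$ of size less than $\Xi$, and set $f(x) := \sum_{i \in S} \mu_i x^i \in \Z[x]$, a nonzero polynomial with fewer than $\Xi$ terms. Each hypothesis $\sum_i \zeta_d^{p^k i} \mu_i = 0$ says exactly that $f$ vanishes at $\zeta_d^{p^k}$, and for $p^k \mid d$ this element is a primitive $(d/p^k)$-th root of unity; taking Galois conjugates over $\Q$ shows that the cyclotomic polynomial $\Phi_{d/p^k}$ divides $f$ in $\Q[x]$ for every prime power $p^k < n$ with $p^k \mid d$. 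So $f$ is forced to have a prescribed family of cyclotomic factors, a strong constraint given that $f$ has so few terms.

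The main device to exploit the smallness of $S$ will be the classical theorem of Mann on vanishing sums of roots of unity (refined by Conway--Jones): there is a constant $F(\Xi)$, depending only on $\Xi$, such that in any minimal vanishing relation $\sum_{j=1}^{s} c_j \eta_j = 0$ with $c_j \in \Q^\times$, $\eta_j$ roots of unity, and $s < \Xi$, all ratios $\eta_j / \eta_{j'}$ are roots of unity of order dividing $F(\Xi)$. Extracting a minimal vanishing subsum of each relation $\sum_{i \in S} \mu_i \zeta_d^{p^k i} = 0$, we find that for suitable pairs $i, i' \in S$ the element $\zeta_d^{p^k(i - i')}$ has order dividing $F(\Xi)$; equivalently, $p^k (i - i')$ lies in the unique subgroup of $\Z/d\Z$ of index dividing $F(\Xi)$.

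I would then pick $n$ larger than an explicit function of $\Xi$, so that on the one hand many prime powers $p^k < n$ with $p^k \mid d$ exist to contribute such constraints, and on the other, the cyclotomic divisibilities $\Phi_{d/p^k} \mid f$ accumulate enough degree to beat $\deg f < d$ whenever $d$ has many small prime power divisors; the argument then splits naturally into two regimes according to the divisor structure of $d$. The main obstacle I anticipate is coordinating the Mann-type constraints as $p^k$ varies: the minimal subsum extracted may depend on $p^k$, so one needs to show that enough prime powers produce compatible constraints on $S$ to pin down the gaps in the support. My preferred route is an induction on $|S|$, using the bounded-index subgroup from Mann's theorem to coset-partition $S$ and reduce to a strictly smaller support, with $n$ eventually built from $F(\Xi)$ together with the (bounded) depth of this induction; both depend only on $\Xi$.
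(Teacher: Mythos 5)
Your opening reductions are sound: each hypothesis does say that $f(\zeta_d^{p^k})=0$, hence $\Phi_{d/p^k}\mid f$ in $\Q[x]$, and Mann's theorem applied to a minimal vanishing subsum does bound the order of each ratio $\zeta_d^{p^k(i-i')}$ by a constant $F(\Xi)$. But the proposal stops exactly where the content of the lemma begins. You yourself note that the minimal subsum, and hence the congruence it imposes on the support, changes with $p^k$, and that one ``needs to show'' the constraints from different $p^k$ are compatible; no mechanism for this is supplied, and this coordination is precisely what the paper's proof is built to do (each relation is recast as membership of $\vec{\mu}$ in a sum of subspaces of functions on $\Z/d\Z$ periodic of period $d/q$; these memberships are iterated until only periods $d/q$ with $q\ge n=\Xi+1$ remain; a maximality argument on a decomposition $\vec{\mu}=\sum_i v_i$ with $v_i$ of period $d/q_i$ then shows the first nonzero $v_i$ forces $\vec{\mu}$ to take at least $q_i\ge n>\Xi$ nonzero values, a contradiction). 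As written, your induction on $|S|$ has no way to guarantee that the piece of $\vec{\mu}$ supported on one congruence class, extracted from the relation at one $p^k$, still satisfies the vanishing relations at the other prime powers, so the inductive hypothesis cannot be applied; this is the missing idea, not a routine detail.

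Two of the supporting steps are also incorrect as stated. First, the constraint Mann gives is membership in a subgroup of bounded \emph{order}, not bounded index: for the relation at $\zeta_d^{q}$ it reads $i\equiv i'$ modulo $(d/q)/\gcd(d/q,F(\Xi))$, a modulus of size roughly $d/(qF(\Xi))$ which grows with $d$ and varies with $q$, so there is no fixed bounded-index subgroup by which to coset-partition $S$. Second, degree counting can never ``beat $\deg f<d$'': even in the most favourable case $d=2^a$, the available relations $q=1,2,\dots,2^{j}$ give $\sum_{b\le j}\phi(d/2^{b})=d(1-2^{-(j+1)})\le d-1$ as soon as $d\ge 2^{j+1}$, and when $d$ has a large prime factor the total is far smaller; moreover a multiple of a many-term polynomial can itself be sparse, so counting terms of $\prod_{p^k}\Phi_{d/p^k}$ is no substitute. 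Hence there is no regime in which the cyclotomic divisibilities alone finish the job, and the proposed dichotomy on the divisor structure of $d$ does not reduce the work. Finally, you cannot secure ``many prime powers $p^k<n$ dividing $d$'' by enlarging $n$: the integer $d$ may be a prime (or a product of primes) exceeding $n$, so the argument must succeed with whatever small prime-power divisors $d$ happens to have --- the paper's proof needs only $n=\Xi+1$ and uses exactly the relations that exist. The Mann--Conway--Jones route may be salvageable, but as proposed the decisive step is absent and some of the scaffolding around it does not hold.
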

\begin{proof} We take $n=\Xi+1$.
Let us begin by establishing some notation. Let $V_1$ denote the
vector space of $\Q$-valued functions on the group $\Z/d\Z$. We think of $\vec{\mu}$
as an element of $V_1$. For each positive integer
$q$ dividing $d$, let $V_d$ denote the subspace of functions $f$ such that 
$f(a+(d/q))=f(a)$ for all $a\in\Z/d\Z$. For each
$d'|d$, let $W^{(d')}_1$ denote the vector space of $\Q$-valued functions on $\Z/d'\Z$,
for each prime power $q|d'$, let $W^{(d')}_q$ denote the set $\{f\in W^{(d')}|
f(a+(d'/q))=f(a) \text{ for all }a\in\Z/d\Z\}$. Thus $W^{(d)}_q$ is an alias for
$V_q$. Finally, we note that we have a map $\pi_{d'} : V_1\to W^{(d')}_1$ mapping
a function $f:\Z/d\Z\to\Q$ to the function $g$ sending $a\in\Z/d'\Z$ to the sum
of $f$ over elements $b\in\Z/d\Z$ which give $a$ when reduced mod $d'$. There
is a similar map $\pi_{d',d''}:W^{(d')}_1\to W^{(d'')}_1$ for all $d''|d'|d$.

\begin{claim} Suppose that $\sum_{i=0}^{d-1} \zeta^{i}_d \mu_i=0$. Then 
$\vec{\mu}\in \sum_{p\text{ a prime},p|d} V_p$, where the sigma denotes
the internal sum of subspaces of $V_1$.
\end{claim}
\begin{proof} We consider $\Q(\zeta_d)$ as a $\Q$ vector space (which is
$\phi(d)$ dimensional, by the standard theory of cyclotomic extensions),
and consider the linear map of $\Q$ vector spaces $\alpha:V_1\to \Q(\zeta_d)$
mapping $f\mapsto \sum_{i=0}^{d-1} f(i)\zeta_d^i$, which is clearly surjective.
Hence $\ker\alpha$ is $d-\phi(d)$ dimensional. On the other hand, for each
prime $p$ dividing $d$, we readily see that $V_p\subset \ker\alpha$, so that
$\sum_{p\in P}V_p \subset \ker \alpha$, where $P$ denotes the set of primes
dividing $d$. On the other hand, 
\begin{align*}
\dim\sum_{p|d,\text{ $p$ prime}}V_p
&=\sum_{S\subset P,S\neq\emptyset} (-1)^{\#S} \dim\left(\bigcap_{p\in S} V_p\right)\\
&=\sum_{S\subset P,S\neq\emptyset} (-1)^{\#S} \frac{d}{\prod_{p\in S} p} = d-\phi(d).
\end{align*}
It follows that $\ker\alpha=\dim\sum_{p|d,\text{ $p$ prime}}V_p$, and hence, as
we assume $\vec{\mu}\in\ker\alpha$, our conclusion follows.
\end{proof}
The same ideas readily extend to prove:
\begin{claim} Suppose that $q|d$ is a prime power and 
$\sum_{i=0}^{d-1} \zeta^{qi}_d \mu_i=0$. Then 
$$\vec{\mu}\in\pi_{d/q}^{-1}\left(\sum_{p\text{ a prime},p|(d/q)} W^{(d/q)}_p\right).$$
\end{claim}

Now, suppose that $q_1,\dots,q_r$ are all prime powers dividing $d$,
and write $q_1'$ for the next-highest power of the prime dividing $q_1$
after $q_1$ (so if $q_1=p^k$, $q_1'=p^{k+1}$). Then we see that:
\begin{multline}\label{reduction eqn}
\left(\sum_{i=1,\dots,r} V_{d/q_i}\right)\cap\,
\left(\pi_{d/q_1}\left(\sum_{p\text{ a prime},p|(d/q_1)} W^{(d/q_1)}_p\right)\right)
\\
=\begin{cases}
V_{d/q_1'}+\sum_{i=2,\dots,r} V_{d/q_i} &\text{(if $q'_1|d$)}\\
\sum_{i=2,\dots,r} V_{d/q_i} &\text{(if $q'_1\nmid d$)}
\end{cases}
\end{multline}

We are now in a position to proceed to the proof proper. Suppose that 
$\vec{\mu}$ satisfies the two bullet points in the statement of the proposition.
Let $p_1,\dots,p_r$ be the primes dividing $d$.
We know from the first claim above that $\vec{\mu}\in V_{p_1} + \dots V_{p_r}$.
We will think of this as saying $\vec{\mu}\in V_{p_1^{a_1}} + \dots V_{p^{a_r}_r}$,
where each $a_i$ stands for 1 for the time being. Our aim will be to gradually
increase the numbers $a_i$ and/or reduce the number of terms in the sum
$V_{p_1^{a_1}} + \dots V_{p_s^{a_s}}$, always maintaining the fact that
\begin{equation} \label{invariant eqn}
\vec{\mu}\in V_{p_1^{a_1}} + \dots V_{p_{s}^{a_1}}
\end{equation}
for some $s\leq r$. (As we go along, we will allow ourselves to reorder the primes $p_1,\dots, p_r$.)

Now, let us suppose that at least one of the $p_i^{a_i}$ is $<n$. We 
know that 
$$\vec{\mu}\in \pi^{-1}_{d/p_i^{a_i}}\left(\sum_{p\text{ a prime},p|(d/p_i^{a_i})} W^{(d/p_i^{a_i})}_p\right)$$
and hence, using equation \ref{reduction eqn} and the ongoing assumption
in equation \ref{invariant eqn}, we can either replace the 
term $V_{p_i^{a_i}}$ in the equation \ref{invariant eqn} with $V_{p_i^{a_i+1}}$
or suppress the term entirely (in which case we can reorder the $p_j$s such
that $p_i=p_{s}$, and then replace $s$ with $s-1$, so maintaining the form of
equation \ref{invariant eqn}).

After repeating this procedure a finite number of times (which can be bounded in terms of the 
number of distinct prime powers dividing $d$), we see that
$\vec{\mu}\in V_{p_1^{a_1}} + \dots V_{p_{r'}^{a_1}}$
where each $p_i^{a_i}$ is $\geq n$.  We let $q_1,\dots,q_s$ be the prime powers
$p_1^{a_1},\dots,p_s^{a_a}$.

\smallskip We know then that $\vec{\mu}$ can be written as a sum $v_1+\dots+v_s$
where each $v_i\in V_{q_i}$. On the other hand, this expression is of course not
unique, since the map $\oplus_i V_{q_i}\to\sum_i V_{q_i}$ is not injective. We place
an ordering on tuples $(v_1,\dots,v_s)\in\oplus_i V_{q_i}$ with $\sum_i v_i=\vec{\mu}$
as follows:
\begin{itemize}
\item \emph{First,} we order tuples by the number of initial zeros in the tuple. That is,
we order according to the maximum $t$ such that $v_1,\dots,v_{t-1}$ are all 0. (The more
zeros, the `larger' we consider the tuple to be.)
\item \emph{Second,} amongst tuples with the same number of initial zeros, we order
as follows. Let $v_t$ be the first nonzero element in the tuple. $v_t$ is an element of 
$V_{q_t}$, and hence a function on $\Z/d\Z$. We order according to the number of times
this function takes a nonzero value, with fewer nonzero values counting as making the
tuple `larger'.
\end{itemize}
It is easy to see that we can find a maximal tuple $(v_1,\dots,v_s)$ in this collection.
It is possible that this maximal tuple is in fact $(0,\dots,0)$, whence $\vec{\mu}=0$ and
we are done. So let us suppose that this is not the case, so there is some $t$ with 
$v_1,\dots,v_{t-1}$ all 0 but $v_t$ nonzero.

We write $U$ for $\sum_{i>t} V_{q_iq_t}$, a subset of $V_{q_i}$. We see that we cannot find
a $u\in U$ with $v_t-u$ (an element of $V_{q_t}$, and hence a function on $\Z/d\Z$)
taking fewer nonzero values than $v_t$ does, else we could write 
$u=\sum_{i>t} u_i$ where $u_i\in V_{q_iq_t}\subset V_{q_i}\cap V_{q_t}$, and 
replace the tuple $(0,\dots,0,v_t,v_{t+1},\dots,v_s)$ with 
$(0,\dots,0,v_t-u,v_{t+1}+u_{t+1},\dots,v_s+u_s)$ which is larger according to the
ordering above, contradicting the maximality of $(v_1,\dots,v_s)$.

Now, for each $i\in\Z/(d/q_t)\Z$ we write $S_i$ for the set of elements of $\Z/d\Z$
congruent to $i$ mod $d/q_t$. We see that the number of nonzero values that 
$\vec{\mu}$ takes on the subset $S_t$ must be at least the number of nonzero
values $v_t$ takes on what subset, otherwise we could find a $u$ as in the previous
paragraph. But $v_t$ is nonzero and periodic with period $d/q_i$. Thus $v_t$
is nonzero for at least $q_i>n>\Xi$ possible inputs, so $\vec{\mu}$ is too, 
contradicting the first bullet point in the statement of the lemma.
\end{proof}

\begin{lem} 
Suppose $\Xi$, $\Phi$, and $N$ are positive integers, and $\eps>0$
is a real number. Then we can find an integer $L(N,\Phi,\Xi,\eps)$ with the following property.

Suppose that $l>L(N,\Phi,\Xi)$ is a prime, $\nu<\Phi$ is an integer, and $d\in\Z_{\geq1}$.
Suppose that for each $i=1,\dots, \nu$ we are given a $d_i\in\Z_{\geq1}$ with $d_i|d$. 
Write  $\tilde{T}$ for the finite (additive) group 
$$\tilde{T} = \left\{(t_1,\dots,t_\nu)\in\prod_{i=1,\dots,\nu} \Z/(l^d-1)\Z \quad\vline\quad 
 l^{d_i} t_i = t_i \text{ for all $i$}\right\}.$$

Write $S$ for the set 
$$S=\prod_{i=1,\dots,\nu} \prod_{j=0,\dots,d_i} (\Z\cap [-N,N]),$$
so we write a typical element of $S$ as $\vec{\mu}=((\mu_{1,0},\dots,\mu_{1,d_1}),\dots,
(\mu_{\nu,0},\dots,\mu_{\nu,d_\nu}))$. 
Write $S_\Xi$ for the subset of $S$ consisting of elements
$\mu$ for which $\#\{(i,j)|\mu_{i,j}\neq 0\} < \Xi$.
For $\vec{\mu}\in S$ and $t\in \tilde{T}$, we define
$$\vec{\mu}(t) = \sum_{i=1,\dots,\nu} \sum_{j=0,\dots,d_i} l^j \mu_{i,j} t_i \quad
\quad (\text{so }\vec{\mu}(t)\in \Z/(l^d-1)\Z).$$
Then we have
$$\#\{(\vec{\mu},t)\in S_\Xi\times \tilde{T}| \vec{\mu}(t) = 0\} \leq \eps\,\# \tilde{T}.$$
\end{lem}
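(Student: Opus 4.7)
The plan is a union-bound argument: for each nonzero $\vec\mu \in S_\Xi$ I will show that $\vec\mu(t)=0$ has at most $\#\tilde T / Y(\vec\mu)$ solutions $t\in\tilde T$, where $Y(\vec\mu)$ is a quantity I can force to be large by taking $L$ large, and then sum over the (polynomially-many-in-$d$) elements of $S_\Xi$ to drive the total below $\eps\,\#\tilde T$.

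To construct $Y(\vec\mu)$, fix a nonzero $\vec\mu$ and pick an index $i_0$ with $\vec\mu_{i_0}:=(\mu_{i_0,0},\dots,\mu_{i_0,d_{i_0}})$ nonzero. Since $\vec\mu_{i_0}$ has fewer than $\Xi$ nonzero entries, the contrapositive of Lemma \ref{lem432} (applied with the constant $n=n(\Xi)$ produced there) furnishes a prime power $q<n(\Xi)$ dividing $d_{i_0}$ for which $\sum_j \zeta_{d_{i_0}}^{qj}\mu_{i_0,j}\neq 0$. Setting $h_{i_0}:=\sum_j l^j\mu_{i_0,j}$ and invoking Lemma \ref{lem431} (with a parameter $\Omega$ to be chosen, and with $L$ larger than the threshold in that lemma), I conclude that the cyclic subgroup $\langle h_{i_0}\rangle \subset \Z/(l^{d_{i_0}}-1)\Z \hookrightarrow \Z/(l^d-1)\Z$ has cardinality at least $Y(\vec\mu):=(\sqrt[q]{\Omega})^{d_{i_0}/\log\log d_{i_0}}$. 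Consequently, for any fixed choice of the remaining coordinates $(t_j)_{j\neq i_0}$, the equation $\vec\mu(t)=0$ rewrites as $h_{i_0}t_{i_0}=c$ for a constant $c\in\Z/(l^d-1)\Z$, admitting at most $(l^{d_{i_0}}-1)/Y(\vec\mu)$ solutions in $t_{i_0}$; multiplying by the $\prod_{j\neq i_0}(l^{d_j}-1)$ choices for the other coordinates gives $\#\{t:\vec\mu(t)=0\}\leq \#\tilde T/Y(\vec\mu)$.

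Summing over nonzero $\vec\mu\in S_\Xi$ bounds the count by $\#\tilde T\cdot \sum_{\vec\mu\neq 0}Y(\vec\mu)^{-1}$, which I would force below $\eps$ by a two-step parameter choice. Since $\#S_\Xi \leq (\Phi(d+1)(2N+1))^\Xi$ is only polynomial in $d$ whereas $Y(\vec\mu)$ is superpolynomial in $d_{i_0}$ as soon as $\Omega>1$, taking $\Omega$ very large kills the contribution from all $\vec\mu$ with $d_{i_0}(\vec\mu):=\max\{d_i:\vec\mu_i\neq 0\}$ above some threshold $D_0=D_0(\Xi,N,\Phi,\eps)$. The hard part will be the small-$d_{i_0}$ regime, where Lemma \ref{lem431} yields only a weak bound (and may not even apply, if $d_{i_0}$ has no prime power divisor below $n(\Xi)$); there I will argue by hand, using that for $d_{i_0}\leq D_0$ the nonzero integer $h_{i_0}$ reduces modulo $l^{d_{i_0}}-1$ to an integer of absolute value bounded independently of $l$, so $\gcd(h_{i_0},l^{d_{i_0}}-1)$ stays bounded as $l\to\infty$ and the image of $h_{i_0}$ is again nearly full. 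Choosing $\Omega$ first (to handle large $d_{i_0}$) and then $L$ (to handle bounded $d_{i_0}$) completes the plan; the single term $\vec\mu=0$, which contributes all of $\#\tilde T$, must be handled by a mild convention (excluding it from $S_\Xi$ or absorbing it into a rescaling of $\eps$), as the inequality as literally written cannot accommodate it.
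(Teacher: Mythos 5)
Your overall strategy is essentially the paper's: use the contrapositive of Lemma \ref{lem432} to attach to the nonzero block of largest $d_i$ a prime power $q<n(\Xi)$ for which the root-of-unity sum is nonzero, feed this into Lemma \ref{lem431} to bound from below the image of multiplication by $h_{i_0}$ on the block $\Z/(l^{d_{i_0}}-1)\Z$, and then union-bound, choosing $\Omega$ before $L$. The paper organizes the union bound by covering $S_\Xi$ with the finitely many sets $S(\vec{q})$ indexed by tuples of prime powers, whereas you group the $\vec{\mu}$ by $d_{i_0}(\vec{\mu})$; these are the same estimate, and your grouping correctly repairs the mismatch you would otherwise have between a count that is polynomial in $d$ and a gain that is only superpolynomial in $d_{i_0}$. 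Your remark that the inequality as literally stated cannot survive the term $\vec{\mu}=0$ is also a correct catch (the paper's own proof silently ignores that term, and under the convention that $q=1$ is an admissible prime power, Lemma \ref{lem432} always supplies a usable $q$, so the ``no small prime power divisor'' worry evaporates).

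The genuine gap is the bounded-$d_{i_0}$ regime, which you defer and whose sketch rests on false assertions. For $d_{i_0}\geq 2$ the residue of $h_{i_0}=\sum_j \mu_{i_0,j}l^j$ modulo $l^{d_{i_0}}-1$ is not bounded independently of $l$, and $\gcd(h_{i_0},l^{d_{i_0}}-1)$ need not stay bounded: take $d_{i_0}=2$, $\mu_{i_0,0}=-1$, $\mu_{i_0,1}=1$, so $h_{i_0}=l-1$, the gcd is $l-1\to\infty$, and the image of multiplication by $h_{i_0}$ on $\Z/(l^2-1)\Z$ has index $l-1$, i.e.\ is far from ``nearly full''. (Worse, with the index range $j=0,\dots,d_i$ as literally printed one may take $\mu_{i_0,0}=-1$, $\mu_{i_0,d_{i_0}}=1$, giving $h_{i_0}\equiv 0$ and every $t$ a solution; the intended range, the one used in the paper's Step 2, is $j\leq d_i-1$.) The regime can still be closed, but by a different and weaker estimate: for $l>2N$ one has $0<|h_{i_0}|\leq N(l^{d_{i_0}}-1)/(l-1)$, hence $\gcd(h_{i_0},l^{d_{i_0}}-1)\leq N(l^{d_{i_0}}-1)/(l-1)$ and the image has size at least $(l-1)/N$, so each nonzero $\vec{\mu}$ in this range contributes at most $N\,\#\tilde{T}/(l-1)$ solutions; since the number of $\vec{\mu}$ with $d_{i_0}(\vec{\mu})\leq D_0$ is bounded in terms of $D_0,\Phi,\Xi,N$ alone, enlarging $L$ makes this contribution smaller than any fixed fraction of $\#\tilde{T}$. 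With that substitution your plan goes through; as written, the ``argue by hand'' step does not.
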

\begin{proof}
{\sl Step 1: selecting $L$.} 
We apply Lemma \ref{lem432} with $\Omega$ and $\Xi$ as in the
present context, deducing the existence of an integer $n$ with the property described
there.  We write $Q$ for the set of tuples $(q_1,\dots,q_\nu)$
where each $q_i$ is either a prime power $<n$, or the symbol $\infty$;
and we note that this set is finite. We set
$\delta=\eps/\#Q$.

It then is a trivial exercise in analysis that we can choose $\Omega$
large enough that 
$$\frac{(\Xi^{\Phi})^x (2N)^\Xi}{(\sqrt[n]{\Omega})^{x/\log\log x}} < \delta$$
for all $x\geq 1$. We then apply Lemma \ref{lem431} with this choice of $\Omega$, 
and with $\Xi$ and $N$ as in the present context; then lemma
furnishes a choice of $L$ with a certain property (as described there). This is the $L$
we will use.

Now we suppose that $l,\nu,d$ etc.~are as in the statement of the lemma.

\smallskip
{\sl Step 2: introducing the sets $S(\vec{q})$ for each $\vec{q}=(q_1,\dots,q_\nu)\in Q$, and showing
$\bigcup_{\vec{q}\in Q} S(\vec{q}) \supset S_\Xi$.} For each $\vec{q}\in Q$ with the property
that, for each $i$, \emph{either} $q_i=\infty$, \emph{or} $q_i|d_i$,
we write $S(\vec{q})$ for the subset of $S_\Xi$ consisting of elements $\vec{\mu}$ for
which, for those $i\in\{1,\dots,\nu\}$ where $q_i$ is a prime power, we have that:
$$\sum_{j=0,\dots,d_i-1} \zeta_d^{q_i j} \mu_{i,j} \neq 0$$
while for those $i$ where $q_i$ is $\infty$, we have that $\mu_{i,j}=0$. 
For $\vec{q}\in Q$ with $q_i\nmid d_i$ for some $i$, we set 
$S(\vec{q})=\emptyset$.

By application of Lemma \ref{lem432}, we see that for each
fixed $\vec{\mu}\in S_\Xi$, and each fixed $i$, there either is some prime power $q_i<n$ dividing
$d_i$ such that the sum in the displayed equation is nonzero, or else we have that $\mu_{i,j}$
is zero for all $j$ (this is since $\#\{j|\mu_{i,j}\neq 0\}<\Xi$).
It follows that for each fixed $\vec{\mu}\in S_\Xi$, we can find a tuple $\vec{q}\in Q$ such that
the sum in the displayed equation is nonzero for every $i$, and hence it follows that 
$\bigcup_{\vec{q}\in Q}S(\vec{q}) \supset S_\Xi$.

\smallskip
{\sl Step 3: bounding
$\#\{(\vec{\mu},t)\in S(\vec{q})\times \tilde{T}| \vec{\mu}(t) = 0\}$
for each $\vec{q}$.}
We now fix $\vec{q}\in Q$.
Our goal in this step of the proof is to show 
$\#\{(\vec{\mu},t)\in S(\vec{q})\times \tilde{T}| \vec{\mu}(t) = 0\} \leq \delta\,\# \tilde{T}.$
If $q_i\nmid d_i$ for some $i$, then $S(\vec{q})=\emptyset$ and there is nothing
to prove, so we may assume $q_i|d_i$ or $q_i=\infty$ for all $i$. Moreover, for
those $i$ for which $q_i=\infty$, we can simply imagine that the corresponding $\mu_{i,j}$
(and $t_i$) no longer exist (i.e. $\nu$ is reduced) and then suppress those $q_i$. So
we can assume that $q_i|d_i$ for all $i$.

Let $d_{\max}=\max_i d_i$. To give an element of $S(\vec{q})$, it suffices to
choose $\Xi$ pairs $(i,j)$ (this chooses $\Xi$ of the $\mu_{i,j}$ to be
potentially nonzero), then for each of these $\Xi$ of the $\mu_{i,j}$ we must choose
their value. The first choice may be made in at most $\Xi^{\sum_i d_i}$ ways, and the 
second in fewer than $(2N)^\Xi$ ways. Thus 
$$\#S(\vec{q}) \leq \Xi^{\sum_i d_i} (2N)^\Xi < \Xi^{\Phi d_{\max}} (2N)^\Xi.$$

Next let us fix some $\vec{\mu}\in S(\vec{q})$. Let $i$ be some index such that 
$d_i=d_{\max}$. We may think of $\vec{\mu}$ as
determining a homomorphism $t\mapsto \vec{\mu}(t)$ mapping $\tilde{T}
\to \Z/(l^d-1)\Z$, and we may think of $\vec{\mu}_i=(\mu_{i,0},\dots,\mu_{i,d_i-1})$
as determining a homomorphism $\Z/(l^{d_{\max}}-1)\Z \to \Z/(l^{d_{\max}}-1)\Z,$
 where $t\mapsto \sum_j l^j\mu_{i,j} t$. We have that
\begin{align*}
\#\{t\in \tilde{T}|\vec{\mu}(t)=0\}&=\#\ker \vec{\mu} = \frac{\#\tilde{T}}{(\tilde{T}:\ker\vec{\mu})}
=\frac{\#\tilde{T}}{\#\vec{\mu}(\tilde{T})}\\
&\leq \frac{\#\tilde{T}}{\#\vec{\mu}_i(\Z/(l^{d_{\max}}-1)\Z)}
<\frac{\#\tilde{T}}{(\sqrt[q_i]{\Omega})^{d_{\max}/\log\log d_{\max}}}\\
&<\frac{\#\tilde{T}}{(\sqrt[n]{\Omega})^{d_{\max}/\log\log d_{\max}}}
\end{align*}
(where the penultimate inequality relies on our choice of $l>L$, where $L$ was the constant
from Lemma \ref{lem431}). 

Combining the bounds on $\#S(\vec{q})$ and, for each $\vec{\mu}\in S(\vec{q})$, on
$\#\{t\in \tilde{T}|\vec{\mu}(t)=0\}$, we see that
\begin{align*}
\#\{(\vec{\mu},t)\in S(\vec{q})\times \tilde{T}| \vec{\mu}(t) = 0\} &\leq \frac{\#\tilde{T}}{(\sqrt[n]{\Omega})^{d_{\max}/\log\log d_{\max}}}\Xi^{\Phi d_{\max}} (2N)^\Xi<\delta\,\#\tilde{T}
\end{align*}
completing this step of the proof.

\smallskip
{\sl Step 4: concluding the argument.} We may write, using step 2,
$$\{(\vec{\mu},t)\in S_\Xi\times \tilde{T}| \vec{\mu}(t) = 0\} \subset \bigcup_{\vec{q}\in Q} 
\{(\vec{\mu},t)\in S(\vec{q})\times \tilde{T}| \vec{\mu}(t) = 0\}$$
and hence
\begin{align*}
\#\{(\vec{\mu},t)\in S_\Xi\times \tilde{T}| \vec{\mu}(t) = 0\} &\leq \sum_{\vec{q}\in Q} \#\{(\vec{\mu},t)\in S(\vec{q})\times \tilde{T}| \vec{\mu}(t) = 0\} \\
&\leq \sum_{\vec{q}\in Q} \delta\,\#\tilde{T} = \#Q\delta\,\#\tilde{T} = \eps\,\#\tilde{T}.
\end{align*}
This is as required.
\end{proof}

\begin{lem} \label{regular elements lemma - gm version}
Suppose $\Xi$, $\Phi$, and $N$ are positive integers, and $\eps>0$
is a real number. 
Then we can find an integer $L(N,\Phi,\Xi)$ with the following property.

Suppose that $l>L(N,\Phi,\Xi)$ is a prime, $\nu<\Phi$ is an integer, and that $k/\F_l$ 
is a finite extension. Suppose that for each $i=1,\dots, \nu$ we are given 
a field $k_i$, with $\F_l\subset k_i\subset k$. Write $d_i=[k_i:\F_l]$.
Write $T^*$ for the finite
group $\prod_i \Gm(k_i)$. Write $S$ for the set 
$$S=\prod_{i=1,\dots,\nu} \prod_{j=0,\dots,d_i} (\Z\cap [-N,N]).$$
Write $S_\Xi$ for the subset of $S$ consisting of elements
$\mu$ for which $\#\{(i,j)|\mu_{i,j}\neq 0\} < \Xi$.

For $\vec{\mu}\in S$ and $t\in T^*$, we define
$$\vec{\mu}(t) = \prod_{i=1,\dots,\nu} \prod_{j=0,\dots,d_i} t_i^{l^j \mu_{i,j}} \in k^\times.$$

\noindent Then we have
$$\#\{(\vec{\mu},t)\in S_\Xi\times T^*| \vec{\mu}(t) = 1\} \leq \eps\,\# T^*,$$
and hence, if we have taken $\eps<1$, there is some $t\in T^*$ such that for no $\vec{\mu}\in S_\Xi$ do
we have $\vec{\mu}(t)=1$.
\end{lem}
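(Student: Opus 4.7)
\medskip
\noindent\textbf{Proof proposal.} The plan is to reduce everything to the additive version of the previous lemma by taking discrete logarithms. Let $d = [k:\F_l]$, so that $k^\times$ is cyclic of order $l^d - 1$. Fix a generator $g$ of $k^\times$ and use it to identify $k^\times$ with the additive group $\Z/(l^d-1)\Z$ via $g^x \mapsto x$. For each $i$, the subfield $k_i \subset k$ satisfies $k_i^\times = \{t \in k^\times : t^{l^{d_i}} = t\}$, which under the identification above corresponds exactly to $\{x \in \Z/(l^d-1)\Z : l^{d_i} x = x\}$. Consequently, taking $L(N,\Phi,\Xi,\eps)$ to be the constant furnished by the preceding additive lemma, the product $T^* = \prod_i k_i^\times$ is identified, as an abelian group, with the group $\tilde{T}$ appearing in the hypothesis of that lemma.

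Under this identification, the multiplicative expression
\[
\vec{\mu}(t) = \prod_{i=1,\dots,\nu} \prod_{j=0,\dots,d_i} t_i^{l^j \mu_{i,j}} \in k^\times
\]
corresponds to the additive expression
\[
\sum_{i=1,\dots,\nu} \sum_{j=0,\dots,d_i} l^j \mu_{i,j}\, \log_g t_i \in \Z/(l^d-1)\Z,
\]
which is exactly the $\vec{\mu}(t)$ of the previous lemma (with the $t_i$ now meaning $\log_g t_i$). The condition $\vec{\mu}(t) = 1$ in $k^\times$ is therefore equivalent to the condition $\vec{\mu}(t) = 0$ in $\Z/(l^d-1)\Z$. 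Applying the preceding lemma with exactly the same data $(N,\Phi,\Xi,\eps)$ yields the desired bound
\[
\#\{(\vec{\mu},t)\in S_\Xi\times T^*\mid \vec{\mu}(t) = 1\} \leq \eps\,\# T^*.
\]

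For the final assertion, I argue by pigeonhole. Suppose, for contradiction, that every $t \in T^*$ admits at least one $\vec{\mu} \in S_\Xi$ with $\vec{\mu}(t) = 1$. Projecting the set $\{(\vec{\mu},t) : \vec{\mu}(t) = 1\}$ onto its second coordinate, surjectivity onto $T^*$ would give at least $\#T^*$ pairs, contradicting the bound $\eps \cdot \#T^* < \#T^*$ when $\eps < 1$. Hence some $t \in T^*$ has no $\vec{\mu} \in S_\Xi$ with $\vec{\mu}(t) = 1$.

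The only mild subtlety — and the step I would write most carefully — is the bookkeeping in the identification of $T^*$ with $\tilde{T}$: strictly speaking, the previous lemma only needs the group-theoretic structure of $T^*$ together with the pairing $(\vec{\mu},t) \mapsto \vec{\mu}(t)$, and all of that survives the passage from multiplicative to additive notation verbatim. There is no genuinely new content here; the heavy lifting has been done in Lemmas \ref{lem431} and \ref{lem432} and their combination in the additive statement.
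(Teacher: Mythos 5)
Your proposal is correct and is essentially the paper's own argument: the paper likewise disposes of the first part by a choice of discrete-log isomorphism $\Gm(k)\isoto \Z/(l^{[k:\F_l]}-1)\Z$ reducing to the preceding additive lemma, and of the second part by comparing cardinalities. Your extra bookkeeping (identifying $k_i^\times$ with $\{x : l^{d_i}x = x\}$ and checking the pairing transports verbatim) is exactly the detail the paper leaves implicit, and it checks out.
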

\begin{proof} The first part of the lemma reduces to the previous one after the application of
some choice of a `discrete log' isomorphism, $\Gm(k)\isoto \Z/(l^{[k:\F_l]}-1)\Z$. The second
is immediate by considering cardinalities.
\end{proof}

\begin{lem} \label{very regular elements lemma}
Suppose that $n$, $M$ and $\Xi'$ are a positive integers. 
There is a constant $C_2(M,n,\Xi')$ with the following property. 
Suppose $l>C_2(M,n)$ is a prime, and we have have an integer $m$,
fields $k_i$, and groups $\cG^{sc}_i$ exhibiting some subgroup $\Gamma<\GL_n(k)$
as very sturdy,
as per Definition \ref{definition of very sturdy}; suppose further that these have the 
additional properties described in Remark \ref{nicely presented very sturdy subgroups}. 
Suppose for each $i$, $\cT/k_i$
is a maximal torus in $\cG_i$. Then we can find elements $g_i\in\cT_i(k_i)\subset\cG_i(k_i)$ for 
each $i$, such that the map
\begin{align*}
\alpha:\prod_{i=1,\dots,m} \left\{ \lambda\in X(\cT_{i,\bar{k}})\,|\,||\lambda||<C_1(n)\right\}^{[k_i,\F_l]} &\to \bar{k}^\times\\
((\lambda_{1,1},\dots,\lambda_{1,[k_1,\F_l]}),\dots,(\lambda_{m,1},\dots,\lambda_{m,[k_m,\F_l]}))&\mapsto (\prod_i \prod_{j=0,\dots,[k_1:\F_l]} \lambda_{i,j}(g_i)^{l^j} )^M
\end{align*}
is injective on the subset of the domain consisting of $\vec{\lambda}$ with at most $\Xi'$
of the $\lambda_{i,j}$ nonzero.
(Here $X(\cT_{i,\bar{k}})$ denotes the set of weights for $\cT_{i,\bar{k}}$.)
\end{lem}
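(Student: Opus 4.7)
The plan is to reduce this to the multiplicative version Lemma \ref{regular elements lemma - gm version} by unpacking $\cT_i(k_i)$ into its constituent multiplicative groups of extension fields. First, I would observe that injectivity of $\alpha$ on tuples with at most $\Xi'$ nonzero $\lambda_{i,j}$ is equivalent to the following: writing $\beta(\vec\nu) := \prod_i \prod_j \nu_{i,j}(g_i)^{l^j}$, for every nonzero $\vec\mu$ arising as a difference $\vec\lambda - \vec\lambda'$ of two such tuples (so $\vec\mu$ has at most $2\Xi'$ nonzero weight entries, each of norm less than $2C_1(n)$), one has $\beta(\vec\mu)^M \neq 1$. Thus it suffices to produce $g_i$'s achieving this.

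Next I would invoke Remark \ref{nicely presented very sturdy subgroups}, which (after choosing the $\cG_i^{sc}$ with the stated properties) gives $\cT_i \cong \prod_s \Res_{k_{i,s}/k_i}\Gm$ for certain finite extensions $k_i \subset k_{i,s}$ inside $\bar{k}$, with $[k_{i,s}:k_i]$ equal to the size of the $s$th Frobenius orbit of simple roots of $\cG_i$. This gives $\cT_i(k_i) \cong \prod_s k_{i,s}^\times$, and since $m$ is bounded in terms of $n$ (each $n_i \geq 2$, so $m\leq \log_2 n$) and each torus rank $\rk(\cT_i)$ is bounded in terms of $n_i$, the total number of pairs $(i,s)$ is bounded by a constant $\Phi$ depending only on $n$.

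Writing $g_i = (t_{i,s})_s$ with $t_{i,s} \in k_{i,s}^\times$, a direct calculation in $\cT_{i,\bar k}(\bar k) = \bar k^{\times r_i}$ --- using that a simple root $\gamma = \Frob^{j''}(\gamma_{s,0})$ in orbit $O_s$ gives the coordinate $t_{i,s}^{l^{[k_i:\F_l]j''}}$ of $g_i$ --- yields
\[
\lambda_{i,j}(g_i)^{l^j} \;=\; \prod_{s} \prod_{j''=0}^{|O_s|-1} t_{i,s}^{\,l^{[k_i:\F_l] j'' + j}\,\lambda_{i,j}^{(\Frob^{j''}\gamma_{s,0})}}.
\]
The crucial combinatorial point is that the bijection $(j,j'')\leftrightarrow m := [k_i:\F_l]j'' + j$ from $\{0,\dots,[k_i:\F_l]-1\}\times\{0,\dots,|O_s|-1\}$ onto $\{0,\dots,[k_{i,s}:\F_l]-1\}$ lets us re-index and obtain
\[
\beta(\vec\mu)^M \;=\; \prod_{(i,s)} t_{i,s}^{\,\sum_{m=0}^{[k_{i,s}:\F_l]-1} l^m\,\tilde\mu_{(i,s),m}},
\]
where each $\tilde\mu_{(i,s),m}$ is $M$ times a single coordinate of some $\mu_{i,j}$, hence $|\tilde\mu_{(i,s),m}|\leq 2MC_1(n) =: N$. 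Moreover the number of nonzero $\tilde\mu_{(i,s),m}$ coming from a $\vec\mu$ with at most $2\Xi'$ nonzero $\mu_{i,j}$ is at most $2\Xi'\cdot\max_i \rk(\cT_i)$, a bound $\Xi$ depending only on $n$ and $\Xi'$.

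Finally I would apply Lemma \ref{regular elements lemma - gm version} with these values of $\Phi, N, \Xi$, with $\eps = 1/2$, and with the ambient field taken to be the compositum of the $k_{i,s}$ inside $\bar{\F}_l$. The lemma produces a constant $L(N,\Phi,\Xi,\eps)$, and for $l$ greater than this constant some tuple $(t_{i,s})$ satisfies $\beta(\vec\mu)^M \neq 1$ for every nonzero $\vec\mu$ of the relevant sparsity; the corresponding $g_i = (t_{i,s})_s$ are the desired elements, and we set $C_2(M,n,\Xi')$ equal to this $L$. The main obstacle is the bookkeeping in the third step: one must verify that the re-indexing really does put the exponents into the polynomial-in-$l$ shape required by Lemma \ref{regular elements lemma - gm version} and that all the numerical bounds $N$, $\Phi$, $\Xi$ genuinely depend only on $M$, $n$, $\Xi'$ and not on $l$, the $k_i$, or the detailed Frobenius orbit structure of the simple roots.
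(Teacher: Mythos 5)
Your proposal is correct and follows essentially the same route as the paper's own proof: decompose each $\cT_i(k_i)$ via Remark \ref{nicely presented very sturdy subgroups} into products of $\Gm(k_{i,o})$, re-index the weight evaluations as exponent sums $t_{i,o}^{\sum_m l^m \mu_m}$ with all of $N$, $\Phi$, $\Xi$ bounded in terms of $M$, $n$, $\Xi'$ alone, apply Lemma \ref{regular elements lemma - gm version}, and deduce injectivity by passing to differences with at most $2\Xi'$ nonzero entries. The only differences are cosmetic (your explicit bijection $(j,j'')\mapsto [k_i:\F_l]j''+j$, the choice $\eps=1/2$ versus $0.9$, and your explicit value of $N$ in place of the paper's unspecified bound).
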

\begin{proof}
Before we choose the constant $C_2(M,n)$, let us imagine briefly that we 
have an integer $m$, fields $k_i$, and groups $\cG^{sc}_i$ exhibiting some 
subgroup $\Gamma<\GL_n(k)$ as very sturdy in order to introduce some notation.
As per Remark \ref{nicely presented very sturdy subgroups}, for each $i$, $\cT_i(k_i)$ can be written
as $\prod_{o\in\cO_i} \Gm(k_{i,o})$, where $\cO_i$ is the set of orbits of the simple roots
in $X(\cT_i)$ under $\Gal(\bar{k}/k_i)$, $o$ stands for a particular orbit, and $k_{i,o}$ is 
an extension of $k_i$ with $[k_i:k_{i,o}]=\#o$. We can associate to a tuple
$(\lambda_{i,1},\dots,\lambda_{i,[k_i,\F_l]})$ a collection of tuples
$(\mu_{i,o,1},\dots,\mu_{i,o,[k_{i,o}:\F_l]})$ such that, whenever 
\begin{itemize}
\item $(g_1,\dots,g_m)$ is a tuple
of elements in $\prod_i\cT_i(k_i)$, which corresponds to a tuple of elements
$$((t_{1,1},\dots,t_{1,\#\cO_1}),(t_{2,1},\dots,t_{2,\#\cO_2}),\dots, 
(t_{m,1},\dots,t_{m,\#\cO_m})\in \prod_{i=1,\dots,m} \prod_{o\in \cO_i} \Gm(k_{i,o})$$
under the isomorphism $\cT_i(k_i)\cong\prod_{o\in\cO_i} \Gm(k_{i,o})$,
\end{itemize}
we have that
$$\prod_{i=1,\dots,m} \prod_{j=0,\dots,[k_1:\F_l]} \lambda_{i,j}(g_i)^{M l^j} =
\prod_{i=1,\dots,m} \prod_{o\in\cO_i}\prod_{j=0,\dots,[k_{i,o}:\F_l]} t_{i,j}^{\mu_{i,o,j}l^j}$$
Finally, we write $\nu=\sum_i \#\cO_i$. 

At this point we will proceed to choose $C_2(M,n)$, which must not depend on
the integer $m$, fields $k_i$, and groups $\cG^{sc}_i$.
The fact that each $\lambda_{i,j}$ has $||\lambda_{i,j}||<C_1(n)$ allows us to find
an integer $N$ depending only on $M$ and $n$ such that whenever we have $\mu_{i,o,j}$s
as above, $-N<2 \mu_{i,o,j}<N$ for all $i,o,j$. 

We can also bound in terms of $n$ the largest possible rank of any of
the groups $\cG_i$, (since the group must have a faithful mod center 
representation of dimension $<n$). Thus, if we assume at most $2\Xi'$ of the $\lambda$
are nonzero, since we can bound how many $\mu$s are associated to each of these
$\lambda$ (using the bound on the rank of the $\cG_i$),
and because only the $\mu$s associated to a nonzero $\lambda$
may possibly be nonzero, we deduce that there is a constant $\Xi$ depending
on $n$ and $\Xi'$ alone such that at most $\Xi$ of the $\mu$s are nonzero.

Similarly, we can bound $\nu$ in terms of $n$ alone, since we first bound $m$
in terms of $n$ and then use the bound on the ranks of the $\cG_i$ to bound
the $\#\cO_i$. Let $\Phi$ be the upper bound on $\nu$.

We apply Lemma \ref{regular elements lemma - gm version} with
\begin{itemize}
\item $\Phi$, $\Xi$ and $N$ as in the present context, and
\item $\eps=0.9$.
\end{itemize}
We get a constant $L$, and we put $C_2(M,n,\Xi')=L$.

We then see that if we 
have an integer $m$, fields $k_i$, and groups $\cG^{sc}_i$ exhibiting some 
subgroup $\Gamma<\GL_n(k)$ as very sturdy, and we apply the property of
the bound $L$ given in Lemma \ref{regular elements lemma - gm version},
we have that we can find a $t\in \prod_i \prod_{o\in\cO_i}\Gm(k_{i,o})$
(corresponding to an element $(g_1,\dots,g_m)$, in
$\prod_i \cT_i(k_i)\subset \prod_i \cG_i(k_i)$, with the following property:
\begin{itemize}
\item For any $\lambda\in\prod_{i=1,\dots,m} \left\{ \lambda\in X(\cT_{i,\bar{k}})\,|\,||\lambda||<C_1(n)\right\}^{[k_i,\F_l]}$ with at most $2\Xi'$ of the $\lambda_{i,j}$ nonzero (and hence with at most $\Xi$ of the corresponding
$\mu$s nonzero) we have that 
$$\prod_{i=1,\dots,m} \prod_{j=0,\dots,[k_1:\F_l]} \lambda_{i,j}(g_i)^{M l^j} =
\prod_{i=1,\dots,m} \prod_{o\in\cO_i}\prod_{j=0,\dots,[k_{i,o}:\F_l]} t_{i,j}^{\mu_{i,o,j}l^j}$$
is never 1.
\end{itemize}
It follows that the map $\alpha$ given in the statement of the lemma is injective on the subset
of its domain as described there, if $\vec{\lambda}$, $\vec{\lambda}'$ are two elements
in the domain with at most $\Xi'$ of the $\lambda_{i,j}$ and $\lambda'_{i,j}$ nonzero,
then at most $2\Xi'$ of the $(\lambda/\lambda')_{i,j}$ are nonzero, and so
$\alpha(\lambda/\lambda')$ cannot equal 1, by the bullet point immediately above.
\end{proof}

\subsection{Sturdy implies big}

Our next goal is to show that sturdy subgroups are automatically big, at least 
for $l$ large. Our arguments draw will very heavily on \cite{snowden2009bigness,pjw}.

\begin{prop} \label{very sturdy subgroups are big}
Let $M$ and $n$ be positive integers. There is a 
constant $C_3(M,n)$ depending only on $M$ and $n$ with the following property:
if $l$ is a prime number which is larger than $C_3(M,n)$, $k/\F_l$ 
a finite extension, and $\Gamma<\GL_n(k)$ a very sturdy subgroup, then $\Gamma$ is $M$-big.
\end{prop}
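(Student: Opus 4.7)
The plan is to verify the four conditions of Definition \ref{defn: m-big} in turn, taking $C_3(M,n)$ to be the maximum of the constants needed for each condition.

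For the \emph{no $l$-power quotient} condition: Lemma \ref{sturdy subgps up to iso} tells us $\Gamma/k^{\times}$ is a product of nonabelian finite simple groups of Lie type in characteristic $l$, each of which is perfect; since $k^{\times}$ has order coprime to $l$, any $l$-power quotient of $\Gamma$ must factor through the trivial group. For the \emph{vanishing of $H^0(\Gamma,\mathfrak{sl}_n(k))$}: absolute irreducibility and Schur's lemma identify $\Gamma$-invariants in $\mathfrak{gl}_n(k)$ with $k$, and taking $l>n$ ensures scalars do not lie in $\mathfrak{sl}_n(k)$. For the \emph{vanishing of $H^1(\Gamma,\mathfrak{sl}_n(k))$}: decompose $k^n = \bigotimes_i V_i$ according to the very sturdy structure, so that $\mathfrak{sl}_n(k)$ is a direct summand of $\bigotimes_i \mathfrak{gl}_{n_i}(k)$ as a $\prod_i \pi_i(\cG_i^{sc}(k_i))$-module. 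Using Theorem \ref{larsen-ss-thm} to lift each $V_i$ to characteristic zero, together with inflation-restriction along $k^{\times} \triangleleft \Gamma$ and the K\"unneth formula over the product of simple factors, we reduce to cohomology vanishing for algebraic representations of the individual $\cG_i^{sc}$ in low degree, which holds for $l$ large (this is essentially the argument in \cite{snowden2009bigness} and \cite{pjw}, applied factor-by-factor; we must also incorporate Steinberg's tensor product structure from Theorem \ref{steinberg-rep-thm} so that Frobenius twists contribute trivially in the cohomology range we need).

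For the \emph{fourth condition}, the very regular element will be produced using Lemma \ref{very regular elements lemma}. Concretely, take a maximal torus $\cT_i$ in each $\cG_i$ as in Remark \ref{nicely presented very sturdy subgroups}, and apply Lemma \ref{very regular elements lemma} (with $\Xi'$ chosen to dominate the number of weights of $\mathfrak{gl}_n(k)$ that can meet any single irreducible $k[\Gamma]$-submodule $W$) to produce $g_i\in \cT_i(k_i)$ such that the corresponding $h\in\Gamma$ has the property that the map sending a tuple of weights of norm bounded by $C_1(n)$ to the $M$-th power of the evaluation on $(g_1,\ldots,g_m)$ is injective on tuples with at most $\Xi'$ nonzero entries. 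By Theorem \ref{steinberg-rep-thm} together with Corollary \ref{norm-bounding-corollary}, every irreducible $k[\Gamma/k^{\times}]$-constituent of $\mathfrak{gl}_n(k)$ has all weights of norm $<C_1(n)$, so the eigenvalues of $h$ on $\mathfrak{gl}_n(k)$ are precisely evaluations of such weights; the injectivity then gives a simple eigenvalue $\alpha$ with $\alpha^M\ne\beta^M$ for any other eigenvalue $\beta$, and forces $\pi_{h,\alpha}\circ W\circ i_{h,\alpha}\ne 0$ for every irreducible submodule $W$.

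The main obstacle is the $H^1$-vanishing: although the four conditions split cleanly, the cohomological step requires combining Steinberg's tensor product theorem, the lifting of algebraic representations to characteristic zero via Theorem \ref{larsen-ss-thm}, and generic vanishing for algebraic group cohomology, all in a setting where the factors live over different extensions $k_i/\F_l$. A subsidiary obstacle is bookkeeping for condition (4): one must choose $\Xi'$ large enough to dominate the number of weight contributions coming from any single irreducible $W\subset \mathfrak{gl}_n(k)$ (bounded in terms of $n$ alone) before invoking Lemma \ref{very regular elements lemma}, so that the resulting $C_2(M,n,\Xi')$ depends only on $M$ and $n$, as required.
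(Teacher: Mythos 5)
Your outline follows the same overall architecture as the paper's proof (verify the four conditions; use Theorem \ref{steinberg-rep-thm}, the norm bound of Corollary \ref{norm-bounding-corollary}, and Lemma \ref{very regular elements lemma} for the eigenvalue condition), but the final step of your condition-(4) argument has a genuine gap. What the injectivity from Lemma \ref{very regular elements lemma} buys you is exactly this: writing $k^n\otimes\bar{k}$ as $\bigotimes_{i}\bigotimes_{j}\cV_{i,j}^{\Frob^j}$ via Steinberg, the element $h$ built from the $g_i$ has the tensor product $e$ of the highest-weight vectors as an eigenvector, the corresponding eigenvalue $\alpha$ is a simple root of the characteristic polynomial, and $\alpha^M\ne\beta^M$ for every other root $\beta$. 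It does \emph{not} ``force'' $\pi_{h,\alpha}\circ W\circ i_{h,\alpha}\ne 0$: that condition says every irreducible $k[\Gamma]$-submodule $W$ of $\mathfrak{gl}_n(k)$ has nonzero component on the one-dimensional block $\langle e\rangle\otimes\langle e\rangle^{*}$, and a priori an irreducible submodule could be supported entirely on the other weight blocks of $V\otimes V^{*}$. The missing ingredient is the highest-weight projection lemma of Snowden--Wiles (\cite[Lemma 5.2]{snowden2009bigness}): every nonzero submodule of $\ad\cV$, for $\cV$ an irreducible algebraic representation, has nonzero projection onto $\ad\cV_0$, where $\cV_0=\cV^{\cU}$ is the highest-weight line. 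The paper applies this factor by factor, propagates it through the Steinberg tensor factorization and the external tensor product over the indices $i$, and only then combines it with the very regular element. Your choice of $\Xi'$ ``dominating the number of weights meeting $W$'' does not substitute for this; $\Xi'$ only needs to bound the number of nonzero entries of the weight tuples being separated, which is controlled by $n$ alone.

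A secondary point concerns the $H^1$ step: the paper does not lift via Theorem \ref{larsen-ss-thm} and invoke generic vanishing for algebraic-group cohomology. Instead it uses semisimplicity of low-dimensional $k$-representations of the finite groups $\cG^{sc}_i(k_i)$ for $l$ large (\cite[1.13]{larsen1995maximality}), which kills self-extensions of $V_i$ and hence gives $H^1(\cG^{sc}_i(k_i),\ad^0 V_i)=(0)$ directly; a decomposition of $\ad^0$ of the tensor product, a K\"unneth-type identity using perfectness of the factors, and an inflation argument through the kernel of $r$ (together with \cite[Prop.~2.2]{snowden2009bigness} to pass to $\Gamma$) then finish that bullet. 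Since you defer to the same sources, this part is repairable, but as written the mechanism you describe is not the one that makes the argument work at the level of finite-group (rather than algebraic-group) cohomology.
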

\begin{proof} Since $\Gamma$ is very sturdy, it is abstractly isomorphic to a product of
finite simple groups, each of which is a simple group $\cD(\cH(\F_q))$ of Lie type with $l|q$.
(See Lemma \ref{sturdy subgps up to iso}.) Any normal subgroup must just be a product of
a subset of the factors, and so the quotient will isomorphic to the product of the complimentary
factors, and hence not of $l$ power order. This gives us the first bullet point in the definition
of $M$-bigness.

Since $\Gamma$ is very sturdy, it is assumed to act absolutely irreducibly, and the second bullet
point in the definition of bigness follows.

In order to discuss the remaining points, we will let $m$, $\cG_1,\dots,\cG_m$, $k_1,\dots,k_m$
and $r_1,\dots,r_m$ refer to the various objects of those names described in the definition
of very sturdy, Definition \ref{definition of very sturdy}. We can and will assume that these have been 
chosen with the additional useful properties as described in 
Remark \ref{nicely presented very sturdy subgroups}.
By definition, $r_i$ is a faithful projective representation over $k$ of $\pi_i(\cG^{sc}(k_i))$;
we will write $V_i$ for this representation space\footnote{A convention which we will
follow in the present proof is to use Roman letters like $V$ for representation spaces
of abstract finite groups, and cursive letters like $\cV$ for algebraic representations.}, 
which must be absolutely irreducible 
since otherwise $\Gamma$ would fail to act absolutely irreducibly, contradicting a 
hypothesis of sturdiness. 

We now turn to proving the third bullet point in the definition of bigness.
As explained in \cite[1.13]{larsen1995maximality}, every $k$
representation of $\cG^{sc}_i(k_i)$ is a direct sum of irreducible representations over $\bar{k}$,
and hence every self-extension of $V_i$ is trivial.
It follows that $H^1(\cG^{sc}_i(k_i), \ad V_i)=(0)$. From this and the fact that $\ad V_i$ is
semisimple (again from \cite[1.13]{larsen1995maximality}), so $\ad^0 V_i$ is a direct summand, we see
$H^1(\cG^{sc}_i(k_i), \ad^0 V_i)=(0)$.
Then $H^1(\cG^{sc}_i(k_i), \ad^0 V_i\otimes_{k_i} k)=(0)$, and hence:
\begin{align*}
H^1&(\prod_i \cG^{sc}_i(k_i),  \ad^0 \bigotimes_i (V_i\otimes_{k_i} k))\\
\displaybreak[0]
&=H^1(\prod_i \cG^{sc}_i(k_i),  \bigoplus_{S\subset \{1,\dots m\},S\neq\emptyset} \bigotimes_{i\in S} \ad^0 (V_i\otimes_{k_i} k))\\
\displaybreak[0]
&=\bigoplus_{S\subset \{1,\dots m\},S\neq\emptyset} H^1(\prod_i \cG^{sc}_i(k_i),  \bigotimes_{i\in S} \ad^0 (V_i\otimes_{k_i} k))\\
\displaybreak[0]
&=\bigoplus_{S\subset \{1,\dots m\},S\neq\emptyset} \left(\bigotimes_{i\in S} H^1( \cG^{sc}_i(k_i),  \ad^0 (V_i\otimes_{k_i} k))\right)\otimes\left(\bigotimes_{i\not\in S}\Hom(\cG^{sc}_i(k_i),k)\right)\\
&=\bigoplus_{S\subset \{1,\dots m\},S\neq\emptyset} (0) = (0)
\end{align*}
But this tells us $H^1(\prod_i \cG^{sc}_i(k_i), \ad k^n)=(0)$, where $\prod_i \cG^{sc}_i(k_i)$
acts on $k^n$ via $r$. 

Let $K$ be the kernel of $r$. We have an exact sequence 
$$1\to K \to \prod_i \cG^{sc}_i(k_i) \to r(\prod_i \cG^{sc}_i(k_i)) \to 1$$
and hence an injection
$$H^1(r(\prod_i \cG^{sc}_i(k_i)),(\ad^0 k^n)^K) \into H^1(\prod_i \cG^{sc}_i(k_i),\ad^0 k^n)$$
and so since the group on the right vanishes, so does the group on the left.  Since under $r$
we have that $K$ acts trivially on $k^n$, it acts trivially on $\ad^0 k^n$. Thus 
$H^1(r(\prod_i \cG^{sc}_i(k_i)),\ad^0 k^n) =(0)$,
and the third bullet point in the definition of bigness holds for $r(\prod_i \cG^{sc}_i(k_i))$. 
It follows that it holds for $\Gamma$, arguing as in \cite[Prop 2.2]{snowden2009bigness}.

All that remains is the fourth bullet point in the definition of bigness. Before we turn to this,
we would first like to relate the $r_i$ to algebraic representations. Fix some $i$, $1\leq i\leq m$. 
We now apply Theorem \ref{steinberg-rep-thm} with:
\begin{itemize}
\item $k$ there being our current $k_i$, and
\item $\cG$ there being our current $\cG_i^{sc}$,
\end{itemize}
we see that $V_i\otimes \bar{k}$ (which is initially a representation of $\pi(\cG_i(k_i))$,
but can therefore be thought of a as a representation of $\cG_i(k_i)$ by composing with $\pi$)
is of the form $\bigotimes_{j=0}^{[k:\F_l]-1} \cV_{i,j}^{\Frob^i}$,
where each $\cV_{i,j}$ is an irreducible algebraic representation of $\cG_i/k_i$ with highest weight 
$\lambda$ satisfying $0\leq \lambda(\mathbf{a}) \leq l-1$, and with $k_i$ coefficients. 
By applying Corollary \ref{norm-bounding-corollary}, we can bound the norm
of $\cV_{i,j}$, deducing that $||\cV_{i,j}||<C_1(n)$, where $C_1(n)$ is the constant from 
Corollary \ref{norm-bounding-corollary}.

Now, for each $i$, recall we have chosen (as per Remark \ref{nicely presented very sturdy subgroups}) 
a Borel subgroup $\cB_i$ of $\cG_i$ defined over $k_i$,
and a maximal torus $\cT_i$ for $\cB_i$ (automatically a maximal torus for $\cG_i$).
Let $\cV_{i,j,0}$ be
$\cV^{\cU_i}_{i,j}$, where $\cU_i$ is the unipotent radical of $\cB_i$. Let 
$\lambda_{i,j}:\cT_i\to\mathbb{G}_m$ give the action of $\cT_i$ on $\cV_{i,j,0}$;
as in the \cite{snowden2009bigness} (the second paragraph before the proof of 
Lemma 5.2 there), we see that $\lambda_{i,j}$ is a highest weight of $\cV_{i,j}$
and occurs as a weight with multiplicity 1. 

Let $e_{i,j}$ be a vector in $\cV_{i,j}$,
then let $e_i=\bigotimes_j e_{i,j}$, a vector in 
$\bigotimes_j \cV_{i,j}$. Since $V_i$ corresponds to $\otimes_j \cV_{i,j}^{\Frob j}$
under the correspondence of Theorem \ref{steinberg-rep-thm}, and in particular
they have the same underlying vector space over $k$, we can think of this
$e_i$ also as a vector in $V_i$. Finally let $e=\bigotimes_i e_i$, a vector in a 
representation space for $r$, $V$ say. We see, by Lemma \ref{very regular elements lemma}
that we can find an element $g\in\prod_i\cG_i(k_i)$ such that $e$ is an eigenvector
of $g$ whose corresponding eigenvalue $\alpha$ is a simple root of the characteristic 
polynomial of $g|V$, and which indeed has the property that any other root $\beta$
of this polynomial has $\alpha^M\neq \beta^M$.

On the other hand, by \cite[Lemma 5.2]{snowden2009bigness}, and the arguments
immediately before we see that every
nonzero irreducible submodule of $\ad (\cV_{i,j})$ has non-zero projection onto 
$\ad (\cV_{i,j,0})$; whence every nonzero irreducible submodule of 
$\ad (\bigotimes_j \cV^{\Frob^j}_{i,j})$ has nonzero projection onto 
$\ad (\bigotimes_j \cV^{\Frob^j}_{i,j,0})$; and thus (using Theorem \ref{steinberg-rep-thm},
Theorem \ref{larsen-ss-thm}, and \mar{REF}) we see that 
every nonzero irreducible submodule of $V_i\otimes \bar{k}$ has nonzero projection onto 
$\langle e_i\rangle\otimes \bar{k}$. It follows every nonzero irreducible submodule of 
$V_i$ has nonzero projection onto  $\langle e_i\rangle\otimes \bar{k}$.
Thus every nonzero irreducible submodule of $V$ has
nonzero projection onto $\langle e\rangle$. This is as required.
\end{proof}

\begin{cor} \label{sturdy subgroups are big} Let $M$ and $n$ be positive integers. There is a 
constant $C_3(M,n)$ depending only on $M$ and $n$ with the following property:
if $l$ is a prime number which is larger than $C_3(M,n)$, $k/\F_l$ 
a finite extension, and $\Gamma<\GL_n(k)$ a sturdy subgroup, then $\Gamma$ is $M$-big.
\end{cor}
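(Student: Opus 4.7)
The plan is to deduce the corollary from Proposition \ref{very sturdy subgroups are big} by a short induction, using Proposition \ref{normal subgroups and bigness} to propagate $M$-bigness up the normal chain defining sturdiness. The constant $C_3(M,n)$ will simply be the one already produced in Proposition \ref{very sturdy subgroups are big}; no new estimates are needed.

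Concretely, let $l > C_3(M,n)$ and let $\Gamma < \GL_n(k)$ be sturdy, so by Definition \ref{definition of sturdy} we have a chain $\Gamma_1 \triangleleft \Gamma_2 \triangleleft \dots \triangleleft \Gamma_r = \Gamma$ in which each $(\Gamma_{i+1}:\Gamma_i)$ is coprime to $l$ and $\Gamma_1$ is very sturdy. First, Proposition \ref{very sturdy subgroups are big} applies directly to $\Gamma_1$ and shows that $\Gamma_1$ is $M$-big. Next I induct on $i$: assuming $\Gamma_i$ is $M$-big, the group $\Gamma_i$ is normal in $\Gamma_{i+1}$ with index prime to $l$, so Proposition \ref{normal subgroups and bigness} (applied with $G = \Gamma_{i+1}$ and $H = \Gamma_i$) yields that $\Gamma_{i+1}$ is $M$-big. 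After $r-1$ steps we obtain that $\Gamma = \Gamma_r$ is $M$-big, as required.

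There is really no obstacle here: the definition of sturdy was set up precisely so that the two propositions recalled in Section \ref{sec: big image} chain together to give this conclusion. The only point worth double-checking is that Proposition \ref{normal subgroups and bigness} is stated for a subgroup of $\GL_n(k)$ with a normal subgroup of prime-to-$l$ index, which is exactly the hypothesis supplied by clause (1) of Definition \ref{definition of sturdy}, and that the inductive $\Gamma_i$ are genuine subgroups of $\GL_n(k)$ (which is automatic since they are nested inside $\Gamma \subset \GL_n(k)$).
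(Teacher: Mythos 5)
Your proposal is correct and is essentially identical to the paper's own proof: the paper also takes $C_3(M,n)$ from Proposition \ref{very sturdy subgroups are big}, applies that proposition to the very sturdy $\Gamma_1$, and then propagates $M$-bigness up the chain via Proposition \ref{normal subgroups and bigness}.
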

\begin{proof} We take the constant $C_3(M,n)$ to be as in the Proposition. Then, given a sturdy
$\Gamma$, we look at the chain of subgroups $\Gamma_1\triangleleft \Gamma_2 \triangleleft\dots
\triangleleft \Gamma_r=\Gamma$. We apply the Proposition to see that $\Gamma_1$ is $M$-big; 
we then apply Proposition \ref{normal subgroups and bigness} to see inductively that each
$\Gamma_i$, $i>1$ is $M$-big. In particular, $\Gamma=\Gamma_r$ is $M$-big.
\end{proof}

\section{The main result}\label{sec: main result}

\subsection{} \label{ss: main result}
The heart of what we will prove is the following rather technical proposition.
\begin{prop} \label{main prop-groups version}
For each positive integer $n$, there is an integer $A_n$ with the following
property. Let $l>A_n$ be a prime, $k/\F_l$ a finite extension with $l\nmid[k:\F_l]$, 
and $\Gamma<\GL_n(k)$ 
a subgroup containing $k^\times$. For convenience of notation, let us also
choose a number field $L$ and prime $\lambda$ of $L$ such that 
$\bigO_L/\lambda \bigO_L=k$.
Then one of the following must occur:
\begin{enumerate}
\item $\Gamma$ does not act absolutely irreducibly on $k^n$.
\item $\Gamma$ lies inside some imprimitive subgroup 
$G_{V_1\oplus\dots\oplus V_m}$ (see Definition \ref{def: Aschbacher Dynkin subgps});
that is, $\Gamma$ preserves
a direct sum decomposition, $k^n=V_1\oplus\dots \oplus V_m$ (where $\dim V_i=n/m$
for all $i$), though it need not preserve the individual terms in the direct sum.
\item We can find a $k$ vector space $V_k$, an $L_\lambda$ vector space $V_L$,
an $\bigO_{M_\lambda}$ lattice $\Lambda\subset V_L$, a finite subgroup $G<\GL(\Lambda)$,
and an isomorphism $k^n\cong V_k\otimes \overline{V_L}$ (where $\overline{V_L}$ is the $k$-vector
space $\Lambda\otimes_{\bigO_{L_\lambda}} k$),
such that we can factor the map $\Gamma\into\GL(k^n)\onto\PGL(k^n)$ through the map
$$\PGL(V_k)\times G \into
\PGL(V_k)\times\GL(\Lambda) \onto
\PGL(V_k)\times\PGL(\Lambda) \onto
\PGL(V_k)\times\PGL(\overline{V_L}) \into \PGL(k^n)$$
\item $\Gamma$ is a sturdy subgroup, in the sense of Definition \ref{definition of sturdy}.
\end{enumerate}
Furthermore, in case (3) the order of the group $G$ can be bounded in terms of $n$.
\end{prop}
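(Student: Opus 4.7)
The strategy is induction on $n$. Fixing $n$ and assuming the proposition in all dimensions below $n$, apply the Aschbacher--Dynkin theorem (Theorem \ref{thm: Aschbacher Dynkin thm}) to $\Gamma$: either $\Gamma\supset\SL_n(k)$, in which case $\Gamma$ is very sturdy (take $m=1$, $\cG_1^{sc}=\SL_n$, $k_1=k$) and alternative (4) holds, or else $\Gamma$ lies inside one of the six families of Definition \ref{def: Aschbacher Dynkin subgps}. The reducible class forces (1); the imprimitive class gives (2) directly.

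For the tensor-product classes, $\Gamma$ modulo scalars is contained in $\PGL_{n_1}(k)\times\cdots\times\PGL_{n_r}(k)$ with each $n_i<n$, and by Goursat's lemma equals the fiber product of its projections. Apply the inductive hypothesis to the preimage in $\GL_{n_i}(k)$ of each projection (after adjoining $k^\times$). If some projection falls into (1) or (2), the corresponding structure propagates to $\Gamma$ on the tensor ambient space; if some projection satisfies (3), merge its finite-group factor with the remaining sturdy parts to realize (3) for $\Gamma$; if every projection is sturdy, invoke Lemma \ref{tensor product of sturdy subgroups sturdy} iteratively to conclude (4). The extraspecial class (5) yields alternative (3) directly: the normalizer of an extraspecial subgroup of $\GL_n$ has order bounded purely in terms of $n$ and is by construction the reduction of a subgroup defined over $\overline{\Q}$, which therefore lifts to characteristic zero with $V_k$ one-dimensional.

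The heart of the argument is the almost-simple class (6), where $\Gamma\cdot k^\times/k^\times\subset\bar r(H)$ for some almost simple $H$ with simple socle $H_0$. Apply Proposition \ref{Larsen-prop constructing set S} to $H_0$ (with $\cG$ a suitable ambient semisimple group). For $l$ above Larsen's constant, either $H_0$ belongs to a finite list of simple groups depending only on $n$, or $H_0\cong\cD(\cH(\F_q))$ is of Lie type in characteristic $l$. In the first subcase, $|H|$ is bounded in terms of $n$ via $H/H_0\hookrightarrow\operatorname{Out}(H_0)$, so for $l$ larger than this bound the projective representation $\bar r$ lifts to characteristic zero (enlarging $L$ as needed), producing alternative (3) with $V_k$ one-dimensional and $G$ the lift of the image. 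In the second subcase, Steinberg's theorem (Theorem \ref{steinberg-rep-thm}) expresses $\bar r|_{H_0}$ as a tensor product of Frobenius twists of algebraic representations, matching Definition \ref{definition of very sturdy} with $m=1$; the outer contributions (graph and diagonal automorphisms, together with Frobenius twists whose order divides $[k:\F_l]$) have prime-to-$l$ order for $l$ sufficiently large and using the hypothesis $l\nmid[k:\F_l]$, so $\Gamma$ fits in a one-step chain over a very sturdy subgroup, yielding (4) by Definition \ref{definition of sturdy}. The subtlest bookkeeping lies in the tensor inductive step, ensuring alternatives combine cleanly when different projections fall into different cases (3) and (4), and in verifying the prime-to-$l$ condition on the outer contributions in case (6b).
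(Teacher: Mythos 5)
Your proposal follows the paper's overall architecture (induction on $n$, Aschbacher--Dynkin, Larsen's proposition, Steinberg's theorem, and the tensor lemma), but it has a genuine gap in the tensor classes. The subgroup in Definition \ref{def: Aschbacher Dynkin subgps}(4) stabilizes a decomposition $k^n\cong V_1\otimes\dots\otimes V_m$ only \emph{up to permutation of the factors}: modulo scalars it is $\PGL_d(k)\wr S_m$, not $\PGL_d(k)^m$. So your claim that in the tensor-product classes $\Gamma$ modulo scalars lies in $\PGL_{n_1}(k)\times\dots\times\PGL_{n_r}(k)$, and is therefore a Goursat fiber product of its projections, fails precisely when the induced permutation action on the factors is nontrivial; there are then no projections to which the inductive hypothesis can be applied. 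The paper treats this case separately: if the action on the set of factors is intransitive one regroups the factors and reduces to the factor-by-factor case, while if it is transitive one applies the inductive hypothesis to the preimage $\Gamma_1$ of the image of $\Stab_\Gamma V_1$ in $\PGL(V_1)$. When $\Gamma_1$ falls into alternative (3) one needs tensor induction ($\otimesind_{\Gamma}^{\Gamma_1}$) to manufacture the characteristic-zero factor for all of $\Gamma$; when $\Gamma_1$ is sturdy one passes to the kernel $N$ of the permutation action, shows the analogous groups $N_1,\dots,N_m$ are sturdy via Lemma \ref{normal subgp of sturdy of prime to l index is sturdy}, applies Lemma \ref{tensor product of sturdy subgroups sturdy} repeatedly to get $N$ sturdy, and concludes $\Gamma$ is sturdy because $[\Gamma:N]$ divides $m!$ and is prime to $l$ --- which is why $A_n$ must also be chosen with $l\nmid m$ for all $m\le n$, a condition missing from your list of constraints. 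None of this machinery appears in your proposal, and without it the wreath case is simply not covered.

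A second, smaller gap is in your case (6b): you assert that the relevant Frobenius twists have order dividing $[k:\F_l]$ and then invoke $l\nmid[k:\F_l]$. But the field-automorphism part of $\operatorname{Out}$ of $\cD(\cH(\F_q))$ has order $[\F_q:\F_l]$, and the point that must be \emph{proved} is that $[\F_q:\F_l]$ divides $[k:\F_l]$; the paper does this by applying Theorem \ref{steinberg-rep-thm} to the projective representation of $\cH^{sc}(\F_q)$ and noting that a product of Frobenius twists of algebraic representations cannot be conjugated into $\PGL_n(k)$ unless $[\F_q:\F_l]\mid[k:\F_l]$. You use the conclusion without supplying this argument. (Minor further points: when $\Gamma\supset\SL_n(k)$ the group $\Gamma$ itself is in general only sturdy, via a prime-to-$l$-index chain over $k^\times\SL_n(k)$, not very sturdy; and in cases (5) and (6a) what one actually lifts to characteristic zero is the linear representation of $\Gamma$ itself, which works because $\#\Gamma$ is prime to $l$ once $l$ exceeds the bounded group orders involved --- hence those orders must also be built into the choice of $A_n$.)
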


\begin{proof}
We prove the claim by induction on $n$. We may therefore inductively suppose the
existence of $A_i$ for all $i<n$. 

By applying Proposition \ref{Larsen-prop constructing set S} with $\cG$ being the
algebraic group $\mathcal{PGL}$, we see that there exists a constant $X$ and a 
finite set $S$ of almost simple groups, (a group $H$ is \emph{almost simple}
if $G< H < \Aut G$ for some simple group $G$) such that for all $l>X$ and finite 
extensions $k/\F_l$ we have that 
\begin{itemize}
\item Every subgroup of $\PGL(\F_l)$ which is a finite almost simple group is either
isomorphic to a member of $S$ or to an almost simple group whose corresponding 
simple group is a derived group of an adjoint group of Lie type, $\cD(\cH(\F_q))$, 
where $l|q$ and $\cD(\cH(\F_q))=\text{Im}(\cH^{sc}(\F_q)\to\cH(\F_q))$.\labelbp{subgroup-S}
\end{itemize}
We can and do choose $A_n$ to be large enough that, if $l$ is a prime larger than $A_n$:
\begin{itemize}
\item $l>X$.\labelbp{l-bigger-than-D}
\item $l>A_i$, $i=1,\dots,n-1$.\labelbp{large-enough-to-apply-ind-hyp}
\item If $n=p^m$ is an odd prime power, then $l$ is coprime to $\#p_+^{1+2m}$ and to $\#\Sp_{2m}(\F_p)$.
\labelbp{coprime-to--order-of-odd-extraspecial-group}
\item If $n=2^m$ is a power of 2, then $l>2$ (and hence is coprime $\#2_+^{1+2m}$ and $\#2_-^{1+2m}$), 
and furthermore $l$ is coprime to both $\#\GO^+_{2m}(\F_2)$ and $\#\GO^-_{2m}(\F_2)$.
\labelbp{coprime-to--order-of-even-extraspecial-group}
\item $l$ is coprime to the orders of all the groups in the set $S$.
\labelbp{coprime-to--order-of-S-groups}
\item We have $l \nmid m$ for all $m\leq n$.\labelbp{l-does-not-divide-nums-up-to-n}
\end{itemize}
We must now show that this $A_n$ will have the property described in the proposition.

We consider first the special case where $\Gamma$ is all of $\GL_n(k)$. In this case $\Gamma$
contains $\SL_n(k) k^\times$ as a normal subgroup with prime-to-$l$ index, while $\SL_n(k) k^\times$
is clearly very sturdy, taking $\cG$ as the restriction of scalars of $\cSL_n$ from $k$ to $\F_l$
(this is manifestly simply connected and semisimple),
and the obvious map $\cG\to\cSL_{n[k:\F_l]}$ (which is obviously algebraic). Thus $\Gamma$
is in this case sturdy.

Otherwise, we apply the Aschbacher-Dynkin Theorem (Theorem 
\ref{thm: Aschbacher Dynkin thm}), to $\Gamma$. We see 
that $\Gamma$ is contained in some subgroup of $\GL_n(k)$
of one of the kinds described in Definition \ref{def: Aschbacher Dynkin subgps}, (1)--(6). 
The rest of our proof will be broken into cases, depending on in which kind of 
subgroup $\Gamma$ is contained. 

\begin{itemize}
\item {\sl Case 1: A reducible subgroup.} In this case, we see that $\pi^{-1}(\Gamma)$ acts 
reducibly, and we are done (alternative 1 of the theorem to be proved).
\item {\sl Case 2: An imprimitive subgroup, $G_{V_1\oplus\dots V_m}$.} In this case, 
we are also done (alternative 2 of the theorem to be proved).
\item {\sl Case 3: The stabilizer of a binary tensor product decomposition, factor by factor.} 
Suppose $\Gamma$ stabilizes a tensor product decomposition $V_1\otimes V_2$. 

In this case, the map $\Gamma\into \GL_n(k)\onto \PGL_n(k)$ will factor through the 
obvious map $\PGL(V_1)\times\PGL(V_2)\into\PGL(k^n)$, giving a map $
\phi:\Gamma\to \PGL(V_1)\times\PGL(V_2)$. Let $\pi_i:\PGL(V_1)\times\PGL(V_2)
\to\PGL(V_i)$ ($i$=1,2) denote the projection, and let $q_i:\GL(V_i)\to\PGL(V_i)$ 
($i$=1,2) denote the natural quotient map. Finally, let $\Gamma_i:=
q_i^{-1}(\pi^{-1}_i(\phi_i(\Gamma)))$.

By point \ref{large-enough-to-apply-ind-hyp}, $l>A_{\dim V_1}$, and we may apply
our inductive hypothesis to yield that $\Gamma_1$ satisfies one of the alternatives (1--4) in
the statement of the proposition. We first show that we are done if any of the first three
alternatives hold.
\begin{itemize}
\item {\sl Case 3a: $\Gamma_1$ does not act absolutely irreducibly.} In this case, after 
an extension of fields $k'/k$, $V_1\otimes_k k'$ has a nontrivial proper subspace 
$W$ which is $\Gamma_1$ stable. Then $(k')^n$ has a nontrivial proper subspace 
$W \otimes_{k'} (V_2\otimes_k k')$ which is $\Gamma$ stable, and we have alternative
1 in the statement of the present proposition.
\item {\sl Case 3b: $\Gamma_1$ acts imprimitively.} In this case, $\Gamma_1$ will preserve a direct sum
decomposition $V_1=W_1\oplus\dots\oplus W_m$ for some $m$, where 
$\dim W_i=\dim V_1/m$ for all $i$. Then $\Gamma$ will preserve the direct sum decomposition
$W_1\otimes V_2\oplus\dots W_m\otimes V_2$, and we have alternative
2 in the statement of the present proposition.
\item {\sl Case 3c: The map $\Gamma_1\into\GL(V_1)\onto\PGL(V_1)$ factors through}
\begin{align*}
\quad\quad \quad\quad\PGL(V_{1,k})\times G &\into
\PGL(V_{1,k})\times\GL(\Lambda) \onto
\PGL(V_{1,k})\times\PGL(\Lambda) \\
&\onto
\PGL(V_{1,k})\times\PGL(\overline{V_{1,L}}) \into \PGL(V_1).
\end{align*}
(Here we have a $k$ vector space $V_k$, an $L_\lambda$ vector space $V_L$,
an $\bigO_{L_\lambda}$ lattice $\Lambda\subset V_L$, a finite subgroup $G<\GL(\Lambda)$,
and an isomorphism $k^n\cong V_k\otimes \overline{V_L}$.)
In this case, we have $V\cong V_1\otimes V_2\cong V_k \otimes \overline{V_{1,L}}$ where
$V_k:=V_2\otimes V_{1,k}$, and a commutative diagram as in Figure \ref{big-comm-diag}.
\begin{figure}
$$\xymatrix{
\GL(V)\ar@{->>}[drr]
\\
&\PGL(V_1)\times\PGL(V_2)\ar@{^{(}->}[r] & \PGL(V)
\\
\Gamma\ar@/_3pc/[ddr]\ar@{^{(}->}[uu]\ar@{^{(}->}[ur]
&(\PGL(V_{1,k})\times \PGL(\overline{V_{1,L}}))\times\PGL(V_2)\ar@{^{(}->}[r]\ar@{^{(}->}[u]
    & (\PGL(V_{k})\times \PGL(\overline{V_{1,L}}))\ar@{^{(}->}[u]
\\
&(\PGL(V_{1,k})\times \PGL(\Lambda)) \times\PGL(V_2)\ar@{^{(}->}[r]\ar@{->>}[u] 
       & \PGL(V_{1,k})\times \PGL(\Lambda)\ar@{->>}[u] 
\\
&\PGL(V_{1,k})\times G \times\PGL(V_2)\ar[u]\ar@{^{(}->}[r] & \PGL(V_{k})\times G\ar[u]
}$$
\begin{caption}{Commutative diagram for the proof of Proposition \ref{main prop-groups version}. 
\label{big-comm-diag}}
\end{caption}
\end{figure}

Thus we have alternative 3 in the statement of the present proposition.
\end{itemize}
Thus we have reduced to the case where alternative (4) in the statement of the 
Proposition holds for $\Gamma_1$. We may similarly reduce to the case where (4) holds for
$\Gamma_2$. That is, we see that $\Gamma_1$ and $\Gamma_2$ are both sturdy.
We may also assume that $\Gamma$ acts absolutely irreducibly (else we are done, via
the first alternative). 
By applying Lemma \ref{tensor product of sturdy subgroups sturdy}, we see that $\Gamma$
is sturdy, and we have the fourth alternative in the proposition to be proved.
\item {\sl Case 4: The stabilizer of a tensor product decomposition, 
$k^n=V_1\otimes\dots\otimes V_m$  not necessarily 
factor by factor.} In this case, it is easy to see that for each $\gamma\in\Gamma$, 
$\gamma$ determines a permutation of the $V_i$, and we have
a homomorphism $\phi:\Gamma\to S_X$, where $X=\{V_1,\dots,V_m\}$;
that is, $\Gamma$ acts on $X$. 
If this action is not transitive, then 
we may write $V$ as a tensor product with $\Gamma$ preserving the individual terms in the tensor product, and
hence reduce to the previous case. Thus we may assume that $\Gamma$ acts
transitively on $X$.

We consider, $\Stab_\Gamma V_1$, the stabilizer of $V_1$ in $\Gamma$. Then there is a natural map
$\pi:\Stab_\Gamma V_1\to\PGL(V_1)$. Let $q:\GL(V_1)\to\PGL(V_1)$ denote the quotient map,
and let $\Gamma_1=q^{-1}(\pi(\Stab_\Gamma V_1))$.

By point \ref{large-enough-to-apply-ind-hyp}, $l>A_{\dim V_1}$, and we may apply
our inductive hypothesis to yield that $\Gamma_1$ satisfies one of the alternatives (1--4) in
the statement of the proposition. We analyze each of these cases in turn.

\begin{itemize}
\item {\sl Case 4a: $\Gamma_1$ does not act absolutely irreducibly.} In this case, after 
an extension of fields $k'/k$, $V_1\otimes_k k'$ has a nontrivial proper subspace 
$W$ which is $\Gamma_1$ stable. Then $(k')^n$ has the nontrivial proper subspace 
$W \otimes_{k'} W\otimes_{k'} \dots\otimes_{k'} W$ which is $\Gamma$ stable, and we 
have alternative 1 in the statement of the present proposition.
\item {\sl Case 4b: $\Gamma_1$ acts imprimitively.} In this case, $\Gamma_1$ will 
preserve a direct sum decomposition $V_1=W_1\oplus\dots\oplus W_r$ for some $r$, where 
$\dim W_i=\dim V_1/r$ for all $i$. Then $\Gamma$ will preserve the direct sum decomposition
$V\cong \bigoplus_{(i_1,\dots,i_m)} W_{i_1}\otimes\dots \otimes W_{i_m}$
(though not necessarily the individual terms within it) and we have alternative
2 in the statement of the present proposition.
\item {\sl Case 4c: The map $\Gamma_1\into\GL(V_1)\onto\PGL(V_1)$ factors through}
\begin{align*}
\quad\quad \quad\quad\PGL(V_{1,k})\times G_1 &\into
\PGL(V_{1,k})\times\GL(\Lambda) \onto
\PGL(V_{1,k})\times\PGL(\Lambda) \\
&\onto
\PGL(V_{1,k})\times\PGL(\overline{V_{1,L}}) \into \PGL(V_1).
\end{align*}
(Here we have a $k$ vector space $V_k$, an $L_\lambda$ vector space $V_L$,
an $\bigO_{L_\lambda}$ lattice $\Lambda\subset V_L$, a finite subgroup $G_1<\GL(\Lambda)$,
and an isomorphism $k^n\cong V_k\otimes \overline{V_L}$.)

This data tells us that we can think of $V_{1,L}$ as a characteristic zero representation
of $\Gamma_1$, with stable lattice $\Lambda$ and reduction mod $l$ $\overline{V_{1,L}}$,
and think of $\Pj V_{1,k}$ as a projective representation of $\Gamma_1$; and that 
moreover, if we think of $V_1$ as a representation of $\Gamma_1$ in the obvious way, then
the associated projective representation satisfies 
$\Pj V_1\cong \Pj\overline{V_{1,L}} \otimes \Pj V_{1,k}.$

Then we see
\begin{align*}
\Pj V&\cong \Pj(\otimesind_{\Gamma}^{\Gamma_1} V_1) \cong \otimesind_{\Gamma}^{\Gamma_1} \Pj V_1\\
\displaybreak[0]
&\cong \otimesind_{\Gamma}^{\Gamma_1}\Pj\overline{V_{1,L}} \otimes \Pj V_{1,k}\\
\displaybreak[0]
&\cong
(\otimesind_{\Gamma}^{\Gamma_1} \Pj\overline{V_{1,L}}) \otimes (\otimesind_{\Gamma}^{\Gamma_1} \Pj V_{1,k})\\
\displaybreak[0]
&\cong \Pj\overline{\otimesind_{\Gamma}^{\Gamma_1} V_{1,L}} \otimes (\otimesind_{\Gamma}^{\Gamma_1} \Pj V_{1,k})\\
&\cong \Pj\overline{V_{L}}\otimes \Pj V_{k}
\end{align*}
Here $V_{L}:=\otimesind_{\Gamma}^{\Gamma_1} V_{1,L}$ is a characteristic 0 representation
of $\Gamma$, with an invariant lattive $\Lambda^{\otimes m}$
and $\Pj V_k:=\otimesind_{\Gamma}^{\Gamma_1} \Pj V_{1,k}$ is a characteristic $l$
projective representation. These furnish us with a map $\Gamma\to\GL(\Lambda^{\otimes m})$
(with finite image, $G$ say) and a map $\Gamma\to\PGL(V_{1,k})$. The isomorphisms above
then tell us that we can factor $\Gamma\into\GL(V)\onto\PGL(V)$ through 
\begin{align*}
\PGL(V_{k})\times G \into
\PGL(V_{k})\times\GL(\Lambda^{\otimes m}) \onto
\PGL(V_{k})\times\PGL(\overline{V_{L}}) \into \PGL(V).
\end{align*}

Thus we have alternative 3 in the statement of the present proposition.
\item {\sl Case 4d: $\Gamma_1$ is sturdy.} In this case, we introduce a further subgroup
$N:=\ker\phi$ of $\Gamma$, the subgroup acting trivially on $X$. $N$ is normal in 
$\Gamma$, with index dividing $m!$, and so if we put 
$N_1=q^{-1}(\pi(N\cap\Stab_\Gamma V_1))=q^{-1}(\pi(N))$, then $N_1$ is 
normal in $\Gamma_1$ with index dividing $m!)$. In particular, by 
point \ref{l-does-not-divide-nums-up-to-n}, and the obvious fact that $m<n$,
we see $(l,m)=1$, so by Lemma \ref{normal subgp of sturdy of prime to l index is sturdy}, 
we see that $N_1$ is sturdy.

We can construct, by analogy with $\Gamma_1$, further subgroups $\Gamma_2,\dots,\Gamma_m$.
Since, given any element $\gamma_1$ of $\Gamma_1$, we can find an element of 
$\Gamma_2$ which acts on $V_2$ in the same way as $\gamma_1$ acted on $V_1$
(by conjugating by an element of $\Gamma$ which acts to move $V_1$ to $V_2$,
possible since the action of $\Gamma$ on $X$ is transitive), we see 
$\Gamma_2,\dots,\Gamma_m$ are also big. We construct $N_2,\dots,N_m$ by analogy
with $N_1$ above, and we see that each is sturdy, by the same argument. It follows from
Lemma \ref{tensor product of sturdy subgroups sturdy}, applied repeatedly, 
that $N$ is sturdy. Then since
$N\triangleleft \Gamma$ with index prime to $l$ (since dividing $m!$---see point 
\ref{l-does-not-divide-nums-up-to-n}), we conclude $\Gamma$ is sturdy. 

Thus we are done, having the fourth alternative in the proposition to be proved.
\end{itemize}

\item {\sl Case 5: A subgroup of extraspecial type, $G_{p^{1+2m}}$.} Let us first suppose 
$p\neq 2$. Let $G$ be the subgroup of $\GL_n(\Qbar)$ isomorphic to $p_+^{1+2m}$
as constructed in \S3.10.2 of \cite{wilsonfinitesimple}, $N$ its normalizer in $\GL_n(\Qbar)$
(so $N\cong p_+^{1+2m}\rtimes \Sp_{2m}(\F_p)$.) Let $\bar{G}$ and $\bar{N}$ be their
reductions mod $l$, subgroups of $\GL_n(k)$. We have that
$\#\Gamma|\#G_{p^{1+2m}}$ and $\#G_{p^{1+2m}}|\#\bar{N} \#k^\times$, while
$\#\bar{N}|\#N=\#p_+^{1+2m}\rtimes \Sp_{2m}(\F_p)$. Thus 
$$\#\Gamma|(\#k^\times)(\#p_+^{1+2m})(\#\Sp_{2m}(\F_p))$$
and since on the other hand $1=(\#k^\times,l)=(p,l)=(\#\Sp_{2m}(\F_p),l)$ (by point
\ref{coprime-to--order-of-odd-extraspecial-group}), we see that $(\#\Gamma,l)=1$.
We may think of $\Gamma\into\GL_n(k)$ as being a characteristic $l$ representation
of $\Gamma$; since $(\#\Gamma,l)=1$, this lifts to a characteristic zero representation
$r:\Gamma\to\GL(\Lambda)$ where $\Lambda\subset L_\lambda^n$ is a lattice.

We then have alternative 3 of the present proposition, taking $V_k=k$, $G=r(\Gamma)$,
and mapping $\Gamma\to\PGL(V_k)\times G$ via $(1,r)$, where $1$ is the constant
function taking value the identity.

The case that $p=2$ is completely analogous, using \ref{coprime-to--order-of-even-extraspecial-group}.

\item {\sl Case 6: $\Gamma/k^\times$ is contained in the image of an injective
homomorphism $\phi:H\into\PGL_n(k)$, where $H$ is an almost simple group.}

We first consider the case where $H$ is isomorphic to one of the groups in the set $S$
constructed at the beginning of the proof. In such a case, say $\Gamma/k^\times\cong G_0$, $G_0\in S$,
$\#\Gamma=\#(\Gamma/k^\times)\,\#k^\times=\#G_0\,\#k^\times$, so since $(\#k^\times,l)=
(\#G_0,l)=1$ (using point \ref{coprime-to--order-of-S-groups} above), we have 
$(\#\Gamma,l)=1$. Thus, thinking of the inclusion $\Gamma\into\GL_n(k)$ as a representation
of $\Gamma$ in characteristic $l$, we can lift the representation to characteristic zero, and 
hence deduce that alternative 3 of the Proposition holds, just as in case 5 analyzed above.

Thus we may assume on the one hand that $\Gamma/k^\times$ is isomorphic to an almost 
simple group $H$; and an other hand, this almost simple group cannot be isomorphic to any 
group in the set $S$. But then, given point \ref{subgroup-S} above, we see that 
$\Gamma/k^\times$ is isomorphic to an almost simple group 
whose corresponding simple group is a derived group of an adjoint group of Lie type, 
$\cD(\cH(\F_q))$, where $l|q$ and $\cD(\cH(\F_q))=\text{Im}(\cH^{sc}(\F_q)\to\cH(\F_q))$.

This tells us that $H$ has a normal subgroup, $N$ say, where $N\cong\cD(\cH(\F_q))$. 
\begin{claim} The degree $[\F_q:\F_l]$ divides $[k:\F_l]$.
\end{claim}
\begin{proof}
We can consider $r:\cH^{sc}(\F_q)\to\cH(\F_q)\isoto N \into H \into \PGL_n(k)$ as a projective 
representation of the abstract group $\cH^{sc}(\F_q)$. We can then extend the coefficients of
this representation to $\bar{\F}_l$. Applying Theorem \ref{steinberg-rep-thm}, we see that,
after extending coefficients in this way, it can be constructed 
as the restriction to $\F_q$ points of a product of Frobenius twists of algebraic representations,
as described in the statement of that theorem.
But considering representations of this form, we immediately see that if $[\F_q:\F_l]\nmid[k:\F_l]$, 
we will not be able to conjugate our $\bar{\F}_l$ representation to a representation defined over $k$,
which is a contradiction, because our $\bar{\F}_l$ representation came from one defined over $k$.
\end{proof}
Since, by assumption, $l\nmid [k:\F_l]$ we see that $l\nmid [\F_q:\F_l]$, and hence (using 
the complete description of the outer automorphism groups of groups of Lie type given on 
page xv of \cite{conway-atlas}) that $\operatorname{Out} N$ has order prime to $N$. Thus
$H/N$ has order prime to $l$. Let $q:\GL_n(k)\onto\PGL_n(k)$ denote the quotient map,
and let $\Gamma_2=\Gamma=q^{-1}(\phi(H))$ and $\Gamma_1=q^{-1}(\phi(N))$. Then 
we see that $\Gamma_1\triangleleft \Gamma_2$ with $\#(\Gamma_2/\Gamma_1)=\#(H/N)$ 
prime to $l$. So it suffices to prove that $\Gamma_1$ is sturdy (since then $\Gamma$ is sturdy,
so we are done by the fourth alternative in the statement of the proposition).

But $\text{Im}(\cH^{sc}(\F_q)\to\cH(\F_q))$ is the simple group $\cD(\cH(\F_q))$,
so $\Gamma_1$ is immediately seen to be very sturdy, taking in the definition $m=1$,
$k_1=\F_q$, $\cG=\cH^{sc}$, and $r_1:\cD(\cH(\F_q))\isoto N\into \PGL_n(k)$.
\end{itemize}
This completes the proof of the proposition. (The last sentence of the proposition may be 
straightforwardly verified inductively.)
\end{proof}

We can now reformulate our proposition in the language of representations.
\begin{prop} \label{main prob, lang of reps}
For each positive integer $n$, there is an integer $A_n$ with the following
property. Let $l>A_n$ be a prime, $k/\F_l$ a finite extension with $l\nmid[k:\F_l]$,
 $\Gamma_0$ a
group, and $r:\Gamma_0\to\GL_n(k)$ a representation. For convenience of notation, let us
choose a number field $L$ and prime $\lambda$ of $L$ such that 
$\bigO_L/\lambda \bigO_L=k$.
Then one of the following must occur:
\begin{enumerate}
\item $r$ does not act absolutely irreducibly on $k^n$,
\item there is a proper subgroup $\Gamma_0'<\Gamma_0$ and representation 
$r':\Gamma_0'\to \GL_m(k)$ such that $r=\Ind_{\Gamma_0'}^{\Gamma_0} r'$,
\item there are representations 
$r_1:\Gamma_0\to\GL_m(\bigO_L)$ and
$r_2:\Gamma_0\to\GL_{m'}(k)$ with open kernels and with $m>1$ such that $r= \bar{r}_1\otimes r_2$, or
\item $k^\times\,r(\Gamma_0)$ is a sturdy subgroup.
\end{enumerate}
\end{prop}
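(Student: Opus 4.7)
The plan is to apply Proposition \ref{main prop-groups version} to the subgroup $\Gamma := k^\times r(\Gamma_0)$ of $\GL_n(k)$ with the same constant $A_n$, and then translate each of its four conclusions into the corresponding conclusion here. Conclusions (1) and (4) translate directly, since absolute irreducibility of $r$ is equivalent to that of $\Gamma$, and sturdiness is a property of $\Gamma = k^\times r(\Gamma_0)$ alone.

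For conclusion (2) of Proposition \ref{main prop-groups version}, suppose $\Gamma$ preserves a direct sum decomposition $k^n = V_1 \oplus \cdots \oplus V_m$ with $m > 1$. Then $\Gamma_0$ acts via $r$ on the set $\{V_1, \ldots, V_m\}$. If this action is not transitive, the sum of the $V_i$ in any single orbit is a proper nonzero $\Gamma_0$-invariant subspace, placing us in case (1) here. Otherwise the action is transitive, the stabilizer $\Gamma_0' := \Stab_{\Gamma_0}(V_1)$ is a proper subgroup of $\Gamma_0$ of index $m$, and the standard identification gives $r \cong \Ind_{\Gamma_0'}^{\Gamma_0} r'$, where $r'$ is the action of $\Gamma_0'$ on $V_1$; this is case (2) here.

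The most delicate translation, which I expect to be the main obstacle, is for conclusion (3). There we are given an isomorphism $k^n \cong V_k \otimes \overline{V_L}$ with $m := \dim \overline{V_L}$, a finite subgroup $G < \GL(\Lambda)$, and a factorization of $\Gamma \twoheadrightarrow \Gamma/k^\times \hookrightarrow \PGL(k^n)$ through $\PGL(V_k) \times G$. Composing with $\Gamma_0 \to \Gamma$ and projecting to the $G$ factor gives a representation $r_1: \Gamma_0 \to G \subset \GL_m(\bigO_{L_\lambda})$ with finite image (hence open kernel); enlarging $L$ to contain all matrix entries (which are algebraic as eigenvalues of finite-order elements are roots of unity) lets us view $r_1$ as landing in $\GL_m(\bigO_L)$. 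To produce $r_2$, I would use the elementary fact that any $M \in \GL(V_k \otimes \overline{V_L})$ whose image in $\PGL(V_k \otimes \overline{V_L})$ lies in the subgroup $\PGL(V_k) \times \PGL(\overline{V_L})$ admits an honest factorization $M = A \otimes B$ in $\GL(V_k) \times \GL(\overline{V_L})$, uniquely determined once a lift $B \in \GL(\overline{V_L})$ of the second coordinate is fixed. Taking $B = \bar{r}_1(\gamma)$ determines $r_2(\gamma) \in \GL(V_k)$; the uniqueness forces $r_2(\gamma \gamma') = r_2(\gamma) r_2(\gamma')$, and by construction $r(\gamma) = r_2(\gamma) \otimes \bar{r}_1(\gamma)$ for all $\gamma$, giving case (3) here.

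A final technical subtlety is arranging that $m > 1$ in case (3), as required by the target statement. The resolution is to trace through the inductive proof of Proposition \ref{main prop-groups version}: its base cases producing conclusion (3) (namely, subgroups of extraspecial type and almost simple subgroups of type $S$) lift the entire representation of $\Gamma$ to characteristic zero, yielding $m = n > 1$; and its inductive subcases 3c and 4c preserve $m > 1$ because the tensor-induction of a representation of dimension $m_1 > 1$ still has dimension at least $m_1$.
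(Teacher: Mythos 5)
Your proposal is correct and takes essentially the same route as the paper: apply Proposition \ref{main prop-groups version} to $k^\times\, r(\Gamma_0)$ and translate its four alternatives, handling the imprimitive case by exactly the same transitivity/stabilizer argument. Your treatment of alternative (3) — recovering an honest representation $r_2$ from the unique $A\otimes B$ factorization once $\bar{r}_1(\gamma)$ is fixed, and tracing the induction to guarantee $m>1$ — merely spells out details that the paper's own proof states very tersely.
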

Furthermore, in case (3) the order of the image of $r_1$ can be bounded in terms
of $n$.
\begin{proof}
We will take the constants $A_n$ to be the same constants as in the previous 
proposition, and will show that the property we require holds with this choice 
of the $A_n$. To see this, we apply the previous proposition to 
$r(\Gamma_0) k^\times<\GL_n(k)$. We deduce that one of the four possibilities (1)--(4)
in the conclusion of that proposition must hold. We split the remainder of our proof
into cases, according to which of the alternatives hold
\begin{itemize}
\item {\sl Case 1: $k^\times\, r(\Gamma_0)$ does not act absolutely irreducibly}. In this case
we immediately see that the first alternative of the present Proposition holds.
\item {\sl Case 2: $k^\times\,r(\Gamma_0)\subset G_{V_1\oplus\dots\oplus V_m}$, for some direct
sum decomposition $V\cong V_1\oplus\dots\oplus V_m$.} In this case,
for each $\gamma\in\Gamma_0$, $r(\gamma)$ must send each $V_i$ into some
other $V_i$, and indeed will determine in this way a permutation of the $V_i$.
Thus $\Gamma_0$ acts on the set $\{V_i |1\leq i\leq m\}$. If this permutation action
is not transitive, then $\bigoplus_{V_i \in \bigO_{\Gamma_0} V_1} V_i$  is a nontrivial
$\Gamma_0$ submodule of $V$, so $\Gamma_0$ acts irreducibly, and see that 
the first alternative of the present proposition holds. So we assume the action is 
transitive. Then let $\Gamma_0'$ be the stabilizer of $V_1$. $r|_{\Gamma_0'}$ sends 
$V_1$ to itself, and we then get a representation $r':\Gamma_0'\to\GL(V_1)$. 
We then see that $r=\Ind_{\Gamma_0'}^{\Gamma} r'$.
\item {\sl Case 3: the projective representation factors through a tensor product, and 
the image on one tensor factor lifts to a characteristic zero representation.} Then see
that the other tensor factor in fact lifts from being a projective representation to an 
ordinary representation. This gives us what we need. (The bound on the size of $G$
gives us the bound on the size of the image of $r_1$.)
\item {\sl Case 4: $k^\times\,r(\Gamma)$ is sturdy.} In this case we immediately have the fourth
alternative of the present Proposition.
\end{itemize}
This completes the proof.
\end{proof}

Finally, we can deduce the main theorem (Theorem \ref{main-thm} in the introduction).
\begin{thm} \label{main-theorem again}
For each pair of positive integers $n$ and $M$, there is an 
integer $C(M,n)$ with the following property. Let $l>C(M,n)$ be a prime, $k/\F_l$ a finite 
extension with $l\nmid[k:\F_l]$, 
$\Gamma_0$ a group, and $r:\Gamma_0\to\GL_n(k)$ a representation. 
For convenience of notation, let us choose a number field $L$ and prime $\lambda$ 
of $L$ such that $\bigO_L/\lambda \bigO_L=k$. Suppose that the image of $r$ is not 
$M$-big. Then one of the following must hold:
\begin{enumerate}
\item $r$ does not act absolutely irreducibly on $k^n$,
\item there is a proper subgroup $\Gamma_0'<\Gamma_0$ and representation 
$r':\Gamma_0'\to \GL_m(k)$ such that $r=\Ind_{\Gamma_0'}^{\Gamma_0} r'$, or
\item there are representations 
$r_1:\Gamma_0\to\GL_m(\bigO_L)$ and
$r_2:\Gamma_0\to\GL_{m'}(k)$ with open kernels and with $m>1$ such that $r= \bar{r}_1\otimes r_2$.
\end{enumerate}
Furthermore, in case (3) the order of the image of $r_1$ can be bounded in terms
of $n$.
\end{thm}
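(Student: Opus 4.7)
The plan is to deduce this result straightforwardly from Proposition \ref{main prob, lang of reps} together with Corollary \ref{sturdy subgroups are big} and Proposition \ref{bigness and scalars}. The key observation is that the four alternatives in Proposition \ref{main prob, lang of reps} correspond exactly to the three alternatives of the present theorem plus the case of a sturdy image; and for $l$ sufficiently large, sturdiness implies $M$-bigness, so the fourth alternative is incompatible with the hypothesis that the image fails to be $M$-big.

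Concretely, I would take $C(M,n) := \max(A_n, C_3(M,n))$, where $A_n$ is the constant furnished by Proposition \ref{main prob, lang of reps} and $C_3(M,n)$ is the constant from Corollary \ref{sturdy subgroups are big}. Given a prime $l > C(M,n)$, a finite extension $k/\F_l$ with $l\nmid[k:\F_l]$, and a representation $r:\Gamma_0\to\GL_n(k)$ whose image is not $M$-big, apply Proposition \ref{main prob, lang of reps} to $r$ (which is legitimate since $l > A_n$). This returns one of four alternatives. The first three match alternatives (1), (2), (3) in the statement of the theorem verbatim (including the bound on the order of the image of $r_1$ in case (3), which is inherited from the bound on $\#G$ in the proposition), so in those cases there is nothing further to do.

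The remaining task is to rule out the fourth alternative, that $k^\times r(\Gamma_0)$ is sturdy. Since $l > C_3(M,n)$, Corollary \ref{sturdy subgroups are big} tells us that $k^\times r(\Gamma_0)$ is $M$-big. But Proposition \ref{bigness and scalars} says that $r(\Gamma_0)$ is $M$-big if and only if $k^\times r(\Gamma_0)$ is. Therefore $r(\Gamma_0)$ would be $M$-big, contradicting the hypothesis. Hence the fourth alternative cannot occur, and we conclude that one of (1), (2), (3) holds.

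There is no genuine obstacle here — all the work has been done in the preceding sections. The only minor bookkeeping is to verify that the bound on the size of $G$ in case (3) of Proposition \ref{main prop-groups version} (which is passed through Proposition \ref{main prob, lang of reps} as a bound on the image of $r_1$) depends only on $n$, as asserted in the theorem; this is exactly what the final sentence of Proposition \ref{main prop-groups version} records.
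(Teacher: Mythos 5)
Your proposal is correct and is exactly the paper's argument: the paper deduces the theorem immediately from Proposition \ref{main prob, lang of reps}, Proposition \ref{bigness and scalars}, and Corollary \ref{sturdy subgroups are big}, with the sturdy alternative ruled out by the non-$M$-bigness hypothesis just as you describe. Your write-up merely makes explicit the choice of constant and the bookkeeping that the paper leaves implicit.
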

\begin{proof} This follows immediately from Proposition \ref{main prob, lang of reps},
Proposition \ref{bigness and scalars}, and Corollary \ref{sturdy subgroups are big}.
\end{proof}

We also now prove the Lemma from the introduction asserting that the third option
above never occurs for the residual representations of regular crystalline Galois 
representations where $l$ is large compared to the weight.

\begin{proof}[Proof of Lemma \ref{regular weights lemma}]
Suppose that $n$, $N$, $F$ and $a$ are given as in the statement of the lemma.
We choose $D(n,N,a)=(3(\max_{\tau,i} a_{\tau,i} - \min_{\tau,i} a_{\tau,i})+2)N!$. 

Now suppose that $l$, $L$, $\rho$, and $\iota$ are as in the lemma. Remember
that $l$ has been chosen not to ramify in $F$, so that we may apply 
Fontaine-Laffaille theory.
Choose $v$ 
to be some place above $l$ in $F$. There is some unramified local extension 
$K/F_v$ such that the semisimplification of $\rho|_{I_K}$ breaks up as a direct sum of 
characters, say $\chi_1,\dots,\chi_n$. Let us choose an embedding 
$\sigma:K\into \bar{\Q}_l$.
We may then write
$$\chi_i=\omega_{[L:\Q_l]}^{c_{i,0}+c_{i,1}l+c_{i,2}l^2+\dots+c_{i,[L:\Q_l]-1}l^{[L:\Q_l]-1}}$$
where the $c_{i,j}$ are integers and $\omega_{[L:\Q_l]}$ is Serre's fundamental
character of niveau $[L:\Q_l]$, and moreover
$$\{c_{1,t},c_{2,t},\dots,c_{n,t}\}=a_{\iota\circ\Frob^t\circ\sigma|_L}$$

Write $k'$ for the residue field of $L$.
We may choose an element $\gamma\in I_K\subset G_F$ which maps to a generator
$g\in (k')^\times$ under $\omega_{[L:\Q_l]}$. 

Now, suppose for contradiction that $\bar{\rho}$ did break up as a tensor product
$\bar{\rho}'\otimes r''$. Choose  distinct
$\bar{k}$ eigenvectors of $\bar{\rho}'(\gamma)$, say $e_{\bar{\rho}',1}$ and $e_{\bar{\rho}',2}$.
The ratio of their eigenvalues must be $\alpha$, an $r$th root of unity for some $r\leq N!$.
(This is since all eigenvalues of $\rho(\gamma)$ are roots of unity of order dividing
the order of the image of $\rho$.)
Choose also a $\bar{k}$ eigenvector of $r''(\gamma)$, say $e_{r''}$. Then
$e_{\bar{\rho}',1}\otimes e_{r''}$ and $e_{\bar{\rho}',2} \otimes e_{r''}$ are eigenvectors
of $\bar{\rho}(\gamma)=(\bar{\rho}'\otimes r'')(\gamma)$. The ratio of their eigenvalues
is again $\alpha$; but on the other hand we see that the ratio of their eigenvalues is 
$\chi_i(\gamma)/\chi_j(\gamma)$ for some $i,j$, $1\leq i,j\leq n$, with $i,j$ distinct.

That is
\begin{align*}
\alpha &=\chi_i(\gamma)/\chi_j(\gamma) = g^{c_{i,0}+c_{i,1}l+\dots+c_{i,[L:\Q_l]-1}l^{[L:\Q_l]-1}}/
g^{c_{j,0}+c_{j,1}l+\dots+c_{j,[L:\Q_l]-1}l^{[L:\Q_l]-1}} \\
&= g^{(c_{i,0}-c_{j,0})+l(c_{i,1}-c_{j,1})+l^2(c_{i,2}-c_{j,2})+\dots+l^{[L:\Q_l]-1}(c_{i,[L:\Q_l]-1}-c_{j,[L:\Q_l]-1})}
\end{align*}
So, since $\alpha$ is an $r$th root of unity and $g$ is a generator,
\begin{equation}\label{horrible eqn}
r\left[(c_{i,0}-c_{j,0})+l(c_{i,1}-c_{j,1})+\dots+l^{[L:\Q_l]-1}(c_{i,[L:\Q_l]-1}-c_{j,[L:\Q_l]-1})\right]\equiv 0\quad \text{mod $l^{[L:\Q_l]} -1$}
\end{equation}

On the other hand, we have the following elementary fact, whose proof is an exercise:
\begin{claim} Since $r<N!$, and $l>(3(\max_{\tau,i} d_{\tau,i} - \min_{\tau,i} d_{\tau,i})+2)N!$, 
if we write $S$ for the set $\Z\cap[\min_{\tau,i} d_{\tau,i} - \max_{\tau,i} d_{\tau,i},
\max_{\tau,i} d_{\tau,i} - \min_{\tau,i} d_{\tau,i}]$, then the map
\begin{align*}
S\times\dots\times S&\to \Z/(l^{[L:\Q_l]} -1)\Z\\
(b_0,\dots,b_{[L:\Q_l] -1})&\mapsto (b_0+lb_1+\dots+l^{[L:\Q_l]-1}b_{[L:\Q_l]})r
\end{align*}
is injective.
\end{claim}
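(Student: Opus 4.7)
The claim is a purely elementary statement about $l$-adic-like expansions, and the plan is essentially a triangle-inequality argument combined with a uniqueness-of-expansion argument. Write $D := \max_{\tau,i} d_{\tau,i} - \min_{\tau,i} d_{\tau,i}$, so that $S = \Z \cap [-D, D]$ and the differences $c_i := b_i - b'_i$ of two would-be-colliding tuples lie in $[-2D, 2D]$. Set $k := [L:\Q_l]$. The plan is to assume that two distinct tuples $(b_i)$ and $(b'_i)$ have the same image, and derive a contradiction from the size constraints.

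First I would estimate:
\[
\left|\,r\sum_{j=0}^{k-1} c_j l^j\,\right| \;\leq\; 2Dr \cdot \frac{l^k - 1}{l-1}.
\]
The hypothesis $l > (3D+2)N!$ together with $r \leq N!$ gives $l - 1 > 2Dr$, so the right-hand side is strictly less than $l^k - 1$. Since the image in $\Z/(l^k - 1)\Z$ is assumed to be zero, the integer $r\sum_j c_j l^j$ is a multiple of $l^k - 1$ of absolute value strictly smaller than $l^k - 1$, so it must vanish in $\Z$. Cancelling $r \neq 0$ yields $\sum_{j=0}^{k-1} c_j l^j = 0$ as an integer identity.

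Second, I would extract uniqueness of base-$l$ expansions with digits in $[-2D, 2D]$, using that $l > 3D + 2 > 2D$. Reducing $\sum_j c_j l^j = 0$ modulo $l$ gives $c_0 \equiv 0 \pmod l$; combined with $|c_0| \leq 2D < l$, this forces $c_0 = 0$. Dividing the resulting equation by $l$ and iterating, I get $c_1 = c_2 = \cdots = c_{k-1} = 0$, contradicting the assumed distinctness of the tuples.

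The plan has no genuine obstacle: all the work is absorbed into choosing the constant $3D+2$ in the definition of $D(n,N,a)$ so that the two inequalities $l - 1 > 2Dr$ and $l > 2D$ both hold. The only thing to be mildly careful about is that the second inequality is needed to rule out a ``carrying'' phenomenon that could otherwise allow a non-trivial relation $\sum c_j l^j = 0$ with digits in $[-2D, 2D]$; but since $(3D+2)N! \geq 3D+2 > 2D$, this comes for free from the same hypothesis.
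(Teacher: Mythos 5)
Your argument is correct, and since the paper explicitly leaves this claim as an exercise, your proof (bounding $\bigl|r\sum_j c_j l^j\bigr|$ by $2Dr\,\frac{l^k-1}{l-1}<l^k-1$ to force the integer to vanish, then using uniqueness of base-$l$ digits in $[-2D,2D]$ since $l>2D$) is exactly the intended elementary verification. Nothing is missing; the hypotheses $r\leq N!$ and $l>(3D+2)N!$ are used just as the paper's choice of $D(n,N,a)$ anticipates.
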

Thus, since each $c_{i,t}$ is an $a_{\tau,i}$ for some $\tau$ and $i$, we see that 
$c_{i,t}-c_{j,t}\in S$ for each $i,j,t$, so the claim and eq \ref{horrible eqn} tell us:
$$c_{i,0}-c_{j,0}=c_{i,1}-c_{j,1}=\dots=c_{i,[L:\F_l]-1}-c_{j,[L:\F_l]-1}=0$$
But the fact that $c_{i,0}=c_{j,0}$ tells us some two members of $a_{\iota\circ\sigma|_L}$
are equal, which is contrary to hypothesis. The lemma is therefore proved.
\end{proof}

\subsection{}
Given Theorem \ref{main-theorem again}, it is perhaps natural to ask how often we can expect 
representations of the forms (1)--(3) to have big image. (That is, how often representations 
the theorem does not guarantee to have big image have big image nonetheless.) We will make 
only some very superficial remarks. First, all representations of type (1) will fail to be big,
since acting absolutely irreducibly is part of the definition of bigness.

Second, it seems that representations of the form (3) will `for the most part' fail to have big
image. For instance, an inspection of \cite{conway-atlas} shows that the vast majority
of characteristic 0 representations of finite simple nonabelian groups $G$ seem to have 
dimension $n$ much larger than the maximal order $m$ of an element in $G$, and in such a case
(since the $n$ roots of characteristic polynomials of the elements of $G$ acting via the 
representation must them be $k$th roots of unity where $k\leq m$) it seems perhaps a little
unlikely that one would be able to find an element in $G$ whose characteristic polynomial
has a simple root. Some rudimentary computer calculations seem to back up this intuition.

On the other hand, for very low dimensional representations, it seems that such reductions
of Artin representations do `generally' have big image. 
For instance, in \S4 of \cite{blggord} it is shown for $l>5$ that
any two dimensional representation which acts absolutely irreducibly and is not induced
is in fact 2-big, and
the author believes that any three dimensional representation which is not induced and
acts absolutely irreducibly 
is both 2-big and 3-big. (For four dimensional representations, things rapidly become less
nice: if we let $\rho: (2.A_4)\times (2.A_4)\to \GL_4(\Qbar)$ by tensoring together two copies
of the standard two-dimensional representation of $2.A_4$, it is not hard to see that
the image of $\bar{\rho}$ fails to be big. Here $2.A_4$ is the binary tetrahedral group, the 
unique double cover of $A_4$.) 

This leaves representations of the form (2), about which we have less to say. The question
of whether an induced representation has big image seems to be rather delicate, and 
depends on precise details of the induced representation. It is certainly possible for such
a representation to fail to be big (for instance, let $K_1/\Q$ and $K_2/\Q$ be two quadratic
extensions, $k/\F_l$ a finite extension, $\theta_1:G_{K_1}\to k^\times$ and 
$\theta_2:G_{K_2}\to k^\times$ two characters, and 
$r=(\Ind_{G_{K_1}}^{G_\Q} \theta_1)\otimes(\Ind_{G_{K_1}}^{G_\Q} \theta_2)$; then it is relatively
straightforward to show that $r$ will fail to have big image). On the other hand, it seems
that many such induced representations will in fact have big image.

\bibliographystyle{amsalpha}
\bibliography{barnetlambgeegeraghty}

\end{document}